\newenvironment{thmbis}[1]
{%
	\addtocounter{thm}{-1}%
	\begin{thm}}
	{\end{thm}}
\newtheorem{thm}{Theorem}[section]
\newtheorem*{fixed point criterion}{Fixed point criterion}
\newtheorem{cor}[thm]{Corollary}
\newtheorem{lem}[thm]{Lemma}
\newtheorem{prop}[thm]{Proposition}
\theoremstyle{definition}
\newtheorem{defn}[thm]{Definition}
\newtheorem{exam}[thm]{Example}
\newtheorem{ques}[thm]{Question}
\theoremstyle{remark}
\newtheorem{rem}[thm]{Remark}
\numberwithin{equation}{section}
\newcommand{\Z}{\mathbb Z}
\newcommand{\N}{\mathbb N}
\newcommand{\fix}{\mathrm{Fix}}
\newcommand{\ind}{\mathrm{ind}}
\newcommand{\rk}{\mathrm{rk}}
\newcommand{\aut}{\mathrm{Aut}}
\newcommand{\edo}{\mathrm{End}}
\newcommand{\id}{\mathrm{id}}
\newcommand{\F}{\mathbf{F}}      %fixed point class F
\newcommand{\B}{\mathcal{B}}
\def\a{\alpha}
\def\b{\beta}
\def\g{\gamma}
\begin{document}
%\begin{CJK}{GBK}{song}

\title{Classification of aut-fixed subgroups in free-abelian times surface groups}

\author{Jialin Lei}
\address{School of Mathematics and Statistics, Xi'an Jiaotong University, Xi'an 710049, CHINA}
\email{leijialin0218@stu.xjtu.edu.cn}

\author{Peng Wang}
	\address{School of Mathematics and Statistics, Xi'an Jiaotong University, Xi'an 710049, CHINA}
	\email{wp580829@stu.xjtu.edu.cn}

\author{Qiang Zhang}
\address{School of Mathematics and Statistics, Xi'an Jiaotong University, Xi'an 710049, CHINA}
\email{zhangq.math@mail.xjtu.edu.cn}

%\dedicatory{Dedicated to Professor Boju Jiang on the occasion of his 88th birthday}

\thanks{The authors are partially supported by NSFC (No. 12271385 and 12471066), the Shaanxi Fundamental Science Research Project for Mathematics and Physics (No. 23JSY027), and the Fundamental Research Funds for the Central Universities.}

\subjclass[2010]{20F65, 20F34, 57M07.}

\keywords{Free group, surface group, hyperbolic group, aut-fixed subgroup, classification}

\date{\today}
%\commby{}%
% ----------------------------------------------------------------
\begin{abstract}
In this paper, we are concerned with the direct product $G=\pi_1(\Sigma)\times \Z^k$ for $\Sigma$ a compact orientable surface with negative Euler characteristic, and give a complete classification of its fixed subgroups of automorphisms. As a corollary, we show that $G$ contains, up to isomorphism, infinitely many fixed subgroups of automorphisms if and only if $k\geq 2$, which is a contrast to that of hyperbolic groups. As an application on Nielsen fixed point theory, we provide a family of aspherical manifolds without Jiang's Bound Index Property. Moreover, we also give some results on the fixed subgroups of the direct product $H\times \Z^k$ for $H$ a non-elementary torsion-free hyperbolic group.
\end{abstract}
\maketitle

% ----------------------------------------------------------------

\section{Introduction}
For a group $G$, the \emph{rank} of $G$ denoted $\rk(G)$ is the minimal number of generators of $G$, and $\aut(G)$ (resp. $\edo(G)$) denotes the set of all automorphisms (resp. endomorphisms) of $G$. For any $\phi\in \edo(G)$, the \emph{fixed subgroup} of
$\phi$ is defined to be
 $$\fix\phi :=\{g\in G \mid \phi(g)=g\}.$$
The studies of fixed subgroups are of great current interest in various directions and related topics, see \cite{Car22, CL22, JZ24, Log21, RV21} etc.

It is well known that a subgroup of a free-abelian group $\Z^n$ is again free-abelian with rank $\leq n$, and hence, the fixed subgroup $\rk(\fix\phi)\leq \rk(\Z^n)$ for every $\phi\in \edo(\Z^n)$. For a free group $F_n$ of rank $n$, Bestvina and Handel \cite{BH92}, Imrich and Turner \cite{IT89} showed that $\rk(\fix\phi)\leq \rk(F_n)$ for every $\phi\in \edo(F_n)$.  The same inequality also holds for all endomorphisms of a \emph{surface group}, i.e., the fundamental group of a compact surface, see \cite{JWZ11}. Note that when a surface has boundary, its fundamental group is free. For finitely generated free-abelian groups and surface groups, subgroups are determined (perhaps up to finitely many options) up to isomorphism by their
ranks, then, finitely generated free-abelian groups and surface groups, all contain, up to isomorphism, only finitely many fixed subgroups of endomorphisms. Moreover, Shor \cite{Sh99} showed that the same finiteness also holds for the fixed subgroups of automorphisms of hyperbolic groups, and which was extended to monomorphisms by Lei and Zhang \cite{LZ23} recently. (For more information on the fixed subgroups in hyperbolic groups, see \cite{MO12, Neu92}). In \cite{HW04}, Hsu and Wise gave an example of a group, acting freely and cocompactly on a CAT(0) square complex, but with infinitely many types of fixed subgroups. Consequently, Shor's finiteness result does not hold, if the hyperbolicity condition is relaxed to either biautomaticity or nonpositive curvature.

In this paper, we consider the groups $\pi_1(\Sigma_g)\times \Z^k$ and $F_g\times \Z^k$, where $\Sigma_g$ denotes a closed orientable surface of genus $g$. Then the fundamental group $\pi_1(\Sigma_g)$ has a canonical presentation:
$$
\pi_1(\Sigma_g)=\langle a_1,b_1,\cdots ,a_g,b_g\mid [a_1,b_1]\cdots [a_g,b_g]=1 \rangle,
$$
for $[a, b]=aba^{-1}b^{-1}$.\\

\noindent\textbf{Notations.}  Throughout the paper, let $g, k, r, s, t\ldots$ denote some natural numbers, and hence they are not $\aleph_0$ (i.e., the cardinality of the integer set $\Z$), unless stated otherwise. Let $\lfloor \frac g2 \rfloor$ denote the biggest integer $\leq \frac g2$.\\

The main results of this paper are the following.

\begin{thm}\label{main thm1}
Let $G=\pi_1(\Sigma_g)\times \Z^k$ or $F_g\times \Z^k$ for $g, k\geq 2$. Then
$$\{\rk(\fix\phi)\mid \phi\in \aut(G)\}=\N\cup\{\aleph_0\},$$
and hence, $G$ contains, up to isomorphism, infinitely many fixed subgroups of automorphisms.
\end{thm}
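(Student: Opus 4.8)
The plan is to construct, for each target rank $n \in \N \cup \{\aleph_0\}$, an explicit automorphism $\phi \in \aut(G)$ whose fixed subgroup has that rank. The containment $\{\rk(\fix\phi)\} \subseteq \N \cup \{\aleph_0\}$ is automatic since $G$ is countable, so the content is the reverse containment: realizing every value. I would treat $G = \pi_1(\Sigma_g) \times \Z^k$ and $G = F_g \times \Z^k$ in parallel, writing $G = H \times \Z^k$ where $H$ is either a surface group or a free group, both of which are non-elementary torsion-free hyperbolic.

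The key structural fact to exploit is the semidirect/product decomposition: automorphisms of $H \times \Z^k$ can mix the two factors. The $\Z^k$ factor is central (it is the full center when $H$ is ICC, which holds here), so any $\phi \in \aut(G)$ preserves the center $\Z^k$ and induces an automorphism of the quotient $G/\Z^k \cong H$. This gives a natural "block" description: relative to the decomposition, $\phi$ is built from an automorphism of $H$, an automorphism of $\Z^k$, and a "shearing" homomorphism $H \to \Z^k$ (recording how the $H$-part picks up central terms). The fixed subgroup $\fix\phi$ then fibers over $\fix$ of the induced map on $H$, and the central contribution can be tuned independently via the $\Z^k$-block and the shear.

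The construction I would use: pick the $H$-block to be the identity (or an automorphism whose fixed subgroup in $H$ is understood — the identity gives $\fix = H$, which is infinitely generated as a standalone piece but we want to control ranks), and concentrate the rank-engineering in the abelian and shearing data. Concretely, to get a fixed subgroup of finite rank $n$, I would take an automorphism acting on $\Z^k$ (say via a matrix $A \in \mathrm{GL}_k(\Z)$) so that $\fix(A) \subseteq \Z^k$ has rank $r$ for a chosen $r \le k$, combined with a shear $\delta\colon H \to \Z^k$ so that the elements $(h, v)$ fixed by $\phi$ are exactly those with $h$ fixed in $H$ and $v$ solving a compatibility equation $\delta(h) = (A - I)v$. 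Choosing the $H$-automorphism to have a cyclic or trivial fixed subgroup lets me make the whole fixed subgroup finitely generated of controlled rank, using $k \ge 2$ to furnish enough free parameters; to hit $\aleph_0$ I would instead arrange the $H$-part to contribute an infinitely generated fixed subgroup (e.g. via the identity on $H$, whose fixed subgroup is all of $H$) while the central equation stays solvable, yielding $\rk(\fix\phi) = \aleph_0$.

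The main obstacle, and where I expect the real work to lie, is verifying that each prescribed rank is actually attained and that the fixed subgroup has precisely the claimed isomorphism type rather than merely a bound — this requires a clean formula for $\fix\phi$ in terms of the block data and an argument (likely using that $H$ is hyperbolic with well-understood fixed subgroups of automorphisms, via Shor's finiteness and the rank inequality $\rk(\fix) \le \rk(H)$ cited in the introduction) that the center interacts with $\fix$ of the $H$-part in a controlled, computable way. In particular, the hypothesis $k \ge 2$ must be used essentially: with $k \ge 2$ one has enough room in the shear $\homo(H, \Z^k)$ and in the solution space of $(A-I)v = \delta(h)$ to produce unboundedly large finite ranks and the infinite rank, whereas $k \le 1$ would collapse this freedom. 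I would isolate this as the technical heart and defer the routine block-matrix fixed-point computations.
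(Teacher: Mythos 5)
Your structural setup coincides with the paper's (the block form $\phi(u,v)=(\a(u),\g(u)+\mathcal{L}(v))$ from its Lemma \ref{standerd form}, the compatibility equation, and the splitting $\fix\phi\cong p(\fix\phi)\times\Z^s$ of its Lemma \ref{adding lem 1}), and your plan for hitting $\aleph_0$ (identity on $H$, nontrivial shear, $\mathcal{L}=\id$, giving $\ker\g\times\Z^k\cong F_{\aleph_0}\times\Z^k$) is essentially the paper's Example \ref{aleph0} and Lemma \ref{adding lem 3}. But there is a genuine gap in your mechanism for producing unboundedly large \emph{finite} ranks. You propose to take the $H$-automorphism $\a$ with $\fix\a$ cyclic or trivial and to generate rank from ``free parameters'' in the shear and in the solution space of $(A-I)v=\delta(h)$. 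This cannot work: by the very splitting you set up, $\fix\phi\cong p(\fix\phi)\times\Z^s$ with $p(\fix\phi)\leq\fix\a$ and $s\leq k$, so if $\fix\a$ is cyclic or trivial then $\rk(\fix\phi)\leq 1+k$, a bound independent of all shear and matrix data. The shear and the matrix can only cut $\fix\a$ down to a subgroup; they never create rank.

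The missing idea --- the heart of the paper's Proposition \ref{key lem} --- is to make $\fix\a$ a \emph{non-abelian} free group, say $\fix\a=\langle a_1,a_2\rangle\cong F_2$, and then use the compatibility equation to force $p(\fix\phi)$ to be a subgroup of \emph{finite index} $t-1$ in $F_2$, namely $\{u\in\fix\a \mid \nu(u,a_1)\equiv 0 \bmod (t-1)\}$ where $\nu(u,a_1)$ is the exponent sum of $a_1$ in $u$; by Schreier's formula this subgroup is $F_t$, and letting $t$ run over all integers realizes every finite rank. Concretely the paper takes $\g(a_1)=(1,0)^T$, $\g(a_i)=0$ otherwise, and $\mathcal{L}=\begin{pmatrix}t&t-1\\1&1\end{pmatrix}\in \mathrm{GL}_2(\Z)$ (or $\begin{pmatrix}1&t-1\\0&1\end{pmatrix}$ to pick up an extra $\Z$ factor), so that $\id-\mathcal{L}$ forces exactly the congruence modulo $t-1$. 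This is also where $k\geq2$ genuinely enters, in a different way than you suggest: for $k=1$ one has $\mathcal{L}=\pm\id$, so $\id-\mathcal{L}$ is $0$ or $2$ and only index $1$ or $2$ can be forced (whence the paper's Theorem \ref{main thm2'}: only finitely many types for $F_g\times\Z$), while in $\mathrm{GL}_2(\Z)$ the determinant of $\id-\mathcal{L}$ is an arbitrary integer. Note finally that since $\g$ lands in an abelian group, $p(\fix\phi)$ is always a normal subgroup of $\fix\a$ containing $[\fix\a,\fix\a]$, hence is either of finite index (rank given by Schreier) or of infinite index (rank $0$, $1$, or $\aleph_0$); so passing through finite-index subgroups of a non-abelian $\fix\a$ is not merely the paper's choice but essentially the only route to unbounded finite ranks, and your plan as stated would have to be repaired along exactly these lines.
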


In contrast, both of $\pi_1(\Sigma_g)\times \Z$ and $F_g\times \Z$ contain, up to isomorphism, only finitely many (more precisely, $5g+2$ and $2g+\lfloor \frac g2\rfloor +2$ respectively) fixed subgroups of automorphisms,  see Corollary \ref{main cor2'} and Corollary \ref{main cor2' surf}.

To prove the main results, we study when a subgroup can be realised as a fixed subgroup of an automorphism.

\begin{defn}
A subgroup $H\leq G$ is called \emph{aut-fixed} in $G$, if there exists an automorphism $\phi\in\aut(G)$ such that $\fix\phi=H$; and $H$ is called \emph{aut-fixed up to isomorphism}, if there exists $\phi\in\aut(G)$ such that $\fix\phi\cong H$ (here ``$\cong$" means isomorphism).
\end{defn}

In the paper \cite{DV13}, Delgado and Ventura produced an algorithm to determine when fixed subgroups of free times free-abelian groups are finitely generated and in that
case output a basis. In this paper, we give a complete characterization of aut-fixed subgroups as follows.

\begin{thm}\label{main thm3}
For $G=F_g\times \Z^k$ ($g, k\geq 2$), all subgroups but $F_t\times \Z^k$ ($t> g$) and $F_{\aleph_0}$, are aut-fixed up to isomorphism; or equivalently, a subgroup $H\leq G$ is aut-fixed if and only if it has one of the following isomorphic classes:
\begin{enumerate}
\item $F_t\times \Z^k, ~~0\leq t\leq g;$
\item $F_t\times \Z^s, ~~t\geq 0, ~~0\leq s\leq k-1;$
\item $F_{\aleph_0}\times \Z^s, ~~1\leq s\leq k.$
\end{enumerate}
\end{thm}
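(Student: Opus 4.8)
The plan is to reduce everything to a normal form for $\fix\phi$ by first describing $\aut(G)$. Since $g\ge 2$, the centre $Z(G)=1\times\Z^k$ is characteristic, so every $\phi\in\aut(G)$ preserves it, inducing $\bar\phi\in\aut(G/Z(G))\cong\aut(F_g)$ and $P:=\phi|_{\Z^k}\in GL_k(\Z)$. Writing elements as pairs and using that $\phi(h,0)$ projects to $\bar\phi(h)$, a direct computation gives
$$\phi(h,v)=\big(\bar\phi(h),\,\delta(h)+Pv\big),\qquad \delta\in\homo(F_g,\Z^k),$$
and conversely every triple $(\bar\phi,\delta,P)$ yields such an automorphism, so $\aut(G)$ is parametrised by these triples. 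From this formula $(h,v)\in\fix\phi$ iff $h\in\fix\bar\phi=:K$ and $(I-P)v=\delta(h)$; projecting to the first coordinate gives a central short exact sequence
$$1\to \fix P\to \fix\phi\to K'\to 1,\qquad K':=\ker\big(\bar\delta\colon K\to Q\big),\ \ Q:=\Z^k/\im(I-P),$$
where $\bar\delta$ is $\delta$ reduced mod $\im(I-P)$ and $\fix P=\ker(I-P)\cong\Z^s$ with $s=\rk(\fix P)$. As $K'$ is a subgroup of the free group $K$ it is free, so this central extension splits and $\fix\phi\cong F_{t'}\times\Z^{s}$ with $t'=\rk(K')$. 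The problem thus reduces to determining which pairs $(t',s)$ occur, the governing data being the free rank of $Q$ (which equals $s$, as $\rk\im(I-P)=k-s$) and the torsion subgroup $T\le Q$.

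Next I would establish the two exclusions. If $s=k$ then $P=I$, so $Q=\Z^k$ is torsion-free; hence a finite image $\bar\delta(K)$ must be trivial, forcing $K'=K$ and $t'=\rk K\le g$, whereas an infinite image forces $t'=\aleph_0$ (using that an infinite-index normal subgroup of $F_t$, $t\ge2$, has infinite rank). This rules out finite $t'>g$ when $s=k$, i.e. the family $F_t\times\Z^k$ with $t>g$. If $s=0$ then $I-P$ is injective over $\Z$, so $Q$ is finite, whence $K'$ has finite index in the finitely generated free group $K$ and $t'$ is finite, ruling out $F_{\aleph_0}=F_{\aleph_0}\times\Z^0$. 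These are precisely the two omitted families.

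For realisation I need two ingredients. First, for each $0\le t\le g$ there is $\bar\phi\in\aut(F_g)$ with $\rk(\fix\bar\phi)=t$: take the identity on a free factor $F_t$ and $x\mapsto x^{-1}$ on a complementary $F_{g-t}$ (whose fixed subgroup is trivial), so the free-product normal-form argument gives $\fix\bar\phi=F_t$. Second, for each $0\le s\le k-1$ and each $m$ there is $P\in GL_k(\Z)$ with $\rk(\fix P)=s$ and $T\supseteq\Z/m$: use a block matrix combining $I_{s-1}$ (or, for $s=0$, an extra fixed-point-free companion block), a unipotent block $\left(\begin{smallmatrix}1&m\\0&1\end{smallmatrix}\right)$ producing torsion $\Z/m$, and a fixed-point-free block such as $-I$. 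Then class (1) is realised with $P=I$, $\delta=0$ and $\rk\fix\bar\phi=t$, giving $F_t\times\Z^k$; class (2) with finite $t'$ is realised by such a $P$ (so $s\le k-1$) together with $\bar\delta$ mapping $K=F_2$ onto $\Z/m\le T$, so $K'$ has index $m$ and $\rk K'=m+1$ attains every value $\ge 2$ by Nielsen--Schreier (the cases $t'=0,1$ come from $t=0,1$); class (3) is realised by taking $s\ge 1$, so $Q$ has positive free rank, and letting $\bar\delta$ send $K=F_g$ to an element of infinite order, making $K'$ infinite-index and $t'=\aleph_0$.

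The main obstacle is exactly the mechanism for finite $t'>g$ in class (2). Since $\rk(\fix\bar\phi)\le g$ always, large finite ranks can arise only by inflating rank through a finite-index subgroup $K'\le K$, which in turn requires a large finite torsion quotient of $Q$; such torsion is available precisely when $s\le k-1$, and its absence at $s=k$ (where $Q=\Z^k$ is torsion-free) is what makes $F_t\times\Z^k$, $t>g$, non-realisable. Confirming the free-product fixed-subgroup computation and the Smith-normal-form bookkeeping for $T$ are the routine but essential technical points; everything else is formal once the normal form $\fix\phi\cong F_{t'}\times\Z^s$ is in hand.
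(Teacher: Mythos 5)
Your proposal is correct and takes essentially the same route as the paper: your normal form for automorphisms is Lemma \ref{standerd form}, your split central extension giving $\fix\phi\cong F_{t'}\times\Z^s$ is Lemma \ref{adding lem 1}, your two exclusions (torsion-freeness of the cokernel $Q=\Z^k/\im(I-P)$ when $s=k$, finiteness of $Q$ when $s=0$) are the content of Lemmas \ref{adding lem 2} and \ref{adding lem 3} as packaged in Proposition \ref{fixed can not big than Z^k}, and your unipotent and companion blocks are exactly the matrices used in Proposition \ref{key lem} and Example \ref{aleph0}, with your block-diagonal assembly in $GL_k(\Z)$ playing the role of the paper's multiplicativity lemma (Lemma \ref{product auo-fixed sbgp}). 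The only presentational differences --- deriving the normal form directly from the characteristic centre rather than citing \cite{N20}, and running the bookkeeping through the torsion and free rank of $Q$ rather than by solving the linear system $v=\g(u)+\mathcal{L}(v)$ explicitly --- do not change the underlying argument.
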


\begin{thm}\label{main thm4}
For $G=\pi_1(\Sigma_g)\times \Z^k$ ($g, k\geq 2$), all subgroups but  $F_t\times \Z^k$ ($t\geq 2g$),  $\pi_1(\Sigma_r)\times \Z^k$ ($r> g$) and $F_{\aleph_0}$, are aut-fixed up to isomorphism; or equivalently, a subgroup $H\leq G$ is aut-fixed if and only if it has one of the following isomorphic classes:
\begin{enumerate}
   \item $F_t\times \Z^s$ for $t\geq 0, ~0\leq s\leq k$ with $t< 2g$ if $s=k$;
  \item $\pi_1(\Sigma_{m(g-1)+1})\times \Z^s$ for $m\geq 1, ~0\leq s\leq k$ with $m=1$ if $s=k$;
  \item $F_{\aleph_0}\times \Z^s$ for $1\leq s \leq k$.
\end{enumerate}
\end{thm}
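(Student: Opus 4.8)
The plan is to first pin down the structure of $\aut(G)$ and thereby reduce the computation of $\fix\phi$ to data on the surface factor and on the center. Since $\pi_1(\Sigma_g)$ is a non-elementary torsion-free hyperbolic group it has trivial center, so $Z(G)=\{1\}\times\Z^k$ is characteristic; hence every $\phi\in\aut(G)$ preserves it and descends to an automorphism of $G/Z(G)\cong\pi_1(\Sigma_g)$. Writing elements as pairs $(h,v)$, this forces the normal form $\phi(h,v)=(\psi(h),\,\delta(h)+Mv)$ with $\psi\in\aut(\pi_1(\Sigma_g))$, $M\in \mathrm{GL}_k(\Z)$, and $\delta\in\homo(\pi_1(\Sigma_g),\Z^k)$ (automatically factoring through $H_1(\Sigma_g)=\Z^{2g}$); conversely every such triple gives an automorphism. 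Then $(h,v)\in\fix\phi$ iff $h\in\fix\psi$ and $(I-M)v=\delta(h)$, which exhibits $\fix\phi$ as a central extension $1\to\ker(I-M)\to\fix\phi\to F'\to1$, where $\ker(I-M)\cong\Z^s$ with $s=\dim\ker(I-M)\in\{0,\dots,k\}$ the multiplicity of the eigenvalue $1$, and $F'=\{h\in\fix\psi:\delta(h)\in\im(I-M)\}\leq\fix\psi$.

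Next I would show this extension always splits, so that $\fix\phi\cong F'\times\Z^s$. A homomorphic section amounts to a homomorphism $\tau\colon F'\to\Z^k$ with $(I-M)\tau=\delta|_{F'}$; since $\Z^k$ is abelian both sides factor through the free-abelian group $F'^{\mathrm{ab}}$, and as $\delta(F')\subseteq\im(I-M)$ by definition of $F'$ one lifts basis-vector by basis-vector. Thus the classification reduces to determining the possible isomorphism types of $F'$. Here I split into cases. If $\psi\neq\id$ then $\fix\psi$ is a proper, hence infinite-index, subgroup of $\pi_1(\Sigma_g)$ (a proper finite-index subgroup would be a surface group of rank $>2g$, violating $\rk(\fix\psi)\le 2g$ from \cite{JWZ11}), so $\fix\psi$ is free; $F'=\ker(\bar\delta)$ for $\bar\delta\colon\fix\psi\to\Z^k/\im(I-M)$ is then free, of finite rank or of rank $\aleph_0$. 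If $\psi=\id$ then $\fix\psi=\pi_1(\Sigma_g)$ and $F'=\ker(\bar\delta)$ is normal, so it is the whole group, a finite-index surface subgroup $\pi_1(\Sigma_{m(g-1)+1})$, or an infinite-index normal subgroup $\cong F_{\aleph_0}$.

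The constraints then fall out of the arithmetic of $M$. The dichotomy governing finiteness is that $\Z^k/\im(I-M)$ is infinite exactly when $s\ge1$: if $s=0$ then $\det(I-M)\ne0$, the quotient is finite, $\bar\delta$ has finite image, and $F'$ is forced to have \emph{finite} index in $\fix\psi$ and hence finite rank, which rules out $F_{\aleph_0}$ when $s=0$ and yields the requirement $s\ge1$ in type (3). At the other extreme $s=k$ iff $M=I$ iff $\im(I-M)=0$, so $\bar\delta=\delta$ takes values in the torsion-free group $\Z^k$; a finite image is then trivial, so $F'$ cannot be a \emph{proper} finite-index subgroup. This forbids proper finite-index surface subgroups (forcing $m=1$ when $s=k$) and forbids free $F'$ of finite rank beyond what a trivial $\delta$ gives, i.e.\ beyond $\rk(\fix\psi)$; combined with the sharp bound $\rk(\fix\psi)\le 2g-1$ for $\psi\ne\id$ this yields $t<2g$ when $s=k$. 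Conversely when $s\le k-1$ the torsion of $\Z^k/\im(I-M)$ can be made an arbitrarily large cyclic group, so $\bar\delta$ can cut out finite-index normal subgroups of $\fix\psi$ of every Nielsen--Schreier rank and of $\pi_1(\Sigma_g)$ of every index $m$, while an infinite-image $\bar\delta$ produces $F_{\aleph_0}$; assembling these, together with varying $\psi$ so that $\rk(\fix\psi)$ runs through $0,1,2$, realizes every admissible type in (1)--(3).

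The main obstacle is twofold. First, the realization step requires, for each target, an automorphism $\psi$ of $\pi_1(\Sigma_g)$ with free fixed subgroup of the needed small rank together with a matrix $M$ and a homomorphism $\delta$ whose induced $\bar\delta$ has a kernel of the prescribed isomorphism type; producing surface automorphisms with prescribed free fixed subgroups, and matrices $M$ with prescribed $\dim\ker(I-M)$ and prescribed torsion in $\Z^k/\im(I-M)$, is the most laborious part and is where I expect to lean on the constructions already developed for the free-abelian-times-free case of Theorem \ref{main thm3}. Second, and more delicate, is the sharp inequality $\rk(\fix\psi)\le 2g-1$ for every $\psi\ne\id$ (equivalently $\chi(\fix\psi)\ge\chi(\Sigma_g)$, with equality only for $\fix\psi=\pi_1(\Sigma_g)$): the bound $\rk\le 2g$ cited above is not enough to exclude $F_{2g}\times\Z^k$, and establishing the strict Euler-characteristic form --- the surface analogue of the Gaboriau--Jaeger--Levitt--Lustig index bound --- is the key technical point on which the exclusion of the types $F_t\times\Z^k$ with $t\ge2g$ rests.
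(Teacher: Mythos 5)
Your structural half is essentially the paper's own argument: the normal form $\phi(h,v)=(\psi(h),\delta(h)+Mv)$ is Lemma \ref{standerd form}, the splitting $\fix\phi\cong F'\times\Z^s$ with $s=\dim\ker(I-M)$ is Lemma \ref{adding lem 1}, and your three regimes $s=0$, $0<s<k$, $s=k$ reproduce Lemma \ref{adding lem 2}, Lemma \ref{adding lem 3} and Proposition \ref{fixed can not big than Z^k surf} (your finiteness argument via $|\Z^k/\im(I-M)|=|\det(I-M)|<\infty$ is in fact cleaner than the paper's subgroup $\Gamma_d$ in Lemma \ref{adding lem 2}). Also, your second ``obstacle'' is not actually open: the statement that $\fix\psi$ is either all of $\pi_1(\Sigma_g)$ or free of rank $<2g$ is exactly what the paper extracts from \cite[Theorem 1.2]{JWZ11} together with Lemma \ref{subgp of surface} (see Proposition \ref{fixed subgp in surface gp}); it is a published result and should simply be cited, not treated as a missing index bound.

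The genuine gap is in the realization half. By your own $s=k$ analysis, if $M=I$ then $F'\in\{\fix\psi,\,1,\,F_{\aleph_0}\}$, and a center computation shows that a fixed subgroup isomorphic to $F_t\times\Z^k$ with $t\ge2$ forces $s=k$; hence $F_t\times\Z^k$ with $2\le t<2g$ can \emph{only} arise as $\fix\psi\times\Z^k$ for a surface automorphism $\psi$ with $\fix\psi\cong F_t$ exactly. So letting $\rk(\fix\psi)$ ``run through $0,1,2$'' cannot realize $F_t\times\Z^k$ for $3\le t\le 2g-1$: you need $\psi\in\aut(\pi_1(\Sigma_g))$ with $\fix\psi\cong F_t$ for \emph{every} $t<2g$, and already the $t=2$ case is needed as the base of your index-trick realizations of type (1). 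Moreover, these $\psi$ cannot be obtained by ``leaning on the constructions for Theorem \ref{main thm3}'': in $F_g\times\Z^k$ one builds automorphisms by freely prescribing images of a basis, but $\pi_1(\Sigma_g)$ is one-related and admits no such prescriptions. This is precisely where the paper's proof has irreducibly topological content: Proposition \ref{fixed subgp in surface gp} produces the required $\psi$ by gluing the identity on a compact subsurface $\Sigma'$ with $\pi_1(\Sigma')\cong F_t$ to a boundary-fixing pseudo-Anosov homeomorphism of the complement, and then invokes the Nielsen-path analysis of \cite{JWZ11} (Lemma \ref{JWZ mafandeyinli}) to conclude $\fix\psi=\pi_1(\Sigma')$; an algebraic substitute is supplied only for the single case $F_{2g-1}$. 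Without this ingredient your plan realizes types (2), (3) and the $s\le k-1$ portion of type (1), but not the subgroups $F_t\times\Z^k$ with $2\le t<2g$, so the ``if'' direction of the theorem remains unproved.
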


As an application on Nielsen fixed point theory, we provide a family of aspherical manifolds without Jiang's Bound Index Property (see Subsection \ref{subsect Nilesen pt} for definitions and more details).

\begin{thm}\label{high-dim cout-examp.}
Let $S^1$ be the circle and $T^k$ the $k$-dimension torus. Then
\begin{enumerate}
  \item the $3$-manifold $\Sigma_g\times S^1 (g\geq 2)$ has BIPH, but does not have BIP;
  \item the manifold $\Sigma_g\times T^k (g, k\geq 2)$ does not have BIPH, and hence does not have BIP.
\end{enumerate}
\end{thm}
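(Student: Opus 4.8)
The plan is to read every statement about fixed-point indices off the group theory already in place, exploiting that $M=\Sigma_g\times T^k$ is aspherical with $\pi_1(M)=G=\pi_1(\Sigma_g)\times\Z^k$ and centre the characteristic subgroup $\Z^k$. Say that $M$ \emph{has BIP} if some $B$ satisfies $|\ind(f,\mathbb F)|\le B$ for every self-map $f\colon M\to M$ and every fixed point class $\mathbb F$ of $f$, and \emph{has BIPH} if the same holds over homeomorphisms $f$; as homeomorphisms are self-maps, BIP implies BIPH, so ``hence does not have BIP'' in (2) is automatic once BIPH fails. The geometric dichotomy driving the proof is this: a homeomorphism induces $\phi=f_*\in\aut(G)$, which preserves the centre and hence descends to $\bar\phi\in\aut(\pi_1(\Sigma_g))$, so up to homotopy $f$ is fibre-preserving over a surface homeomorphism realising $\bar\phi$, with $T^k$ fibres; a general self-map is under no such constraint. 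Bounded versus unbounded index will correspond exactly to bounded versus unbounded $\rk(\fix\phi)$, i.e. to the contrast between the finiteness of fixed subgroups for $k=1$ automorphisms and Theorem \ref{main thm1} for $k\ge2$.

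For (2) I would realise the unbounded ranks of Theorem \ref{main thm1} as fixed submanifolds. Pick $A\in\mathrm{GL}_k(\Z)$ with $\det(I-A)\ne0$, with $\mathrm{coker}(I-A)$ cyclic, and with $|\det(I-A)|$ arbitrarily large (the companion matrix of $t^k-at+1$ works for every $k\ge2$, giving $\mathrm{coker}(I-A)\cong\Z/(2-a)$), together with $\delta\colon\pi_1(\Sigma_g)\to\Z^k$ whose reduction modulo $\im(I-A)$ is onto. Realising $\delta$ by $\lambda\colon\Sigma_g\to T^k$, the fibrewise-affine map $f(x,y)=(x,\lambda(x)+Ay)$ is a homeomorphism of $M$, and its fixed set $\{(x,y):(I-A)y=\lambda(x)\}$ is a connected $|\det(I-A)|$-fold surface cover $N\cong\Sigma_r$ of $\Sigma_g$ $\bigl(r=|\det(I-A)|(g-1)+1\bigr)$, realising the subgroup $\fix\phi\cong\pi_1(\Sigma_r)$ of case (2) of Theorem \ref{main thm4}. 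Because $\det(I-A)\ne0$ makes $f$ non-degenerate in the fibre directions, $N$ is the whole fixed set and a single fixed point class, and the fixed-submanifold index formula gives $\ind(f,N)=\det(I-A)\cdot\chi(\Sigma_g)$, whose modulus is unbounded. Thus $M$ has no BIPH, hence no BIP.

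For (1) I would argue the two halves separately. To break BIP I keep the same picture but drop invertibility on the $S^1$ factor: taking the degree-$d$ map $y\mapsto dy$ with $|d|\ge2$, the self-map $f(x,y)=(x,\lambda(x)+dy)$ has $f_*$ sending the $\Z$-factor by $z\mapsto z^d$ (so $f$ is not a homotopy equivalence), its fixed set is a connected $|1-d|$-fold cover $\Sigma_r$ of $\Sigma_g$, and the normal derivative $1-d\ne0$ yields $\ind(f,\Sigma_r)=(1-d)\cdot\chi(\Sigma_g)$, again of unbounded modulus; this is the endomorphism analogue of Theorem \ref{main thm1}. To prove BIPH, note a homeomorphism $f$ has $\phi|_{\Z}=A\in\mathrm{GL}_1(\Z)=\{\pm1\}$, so after homotoping $f$ to fibre-preserving form over a surface homeomorphism $\bar f$, the $S^1$-fibre map has degree $\pm1$ and $\det(I-A)\in\{0,2\}$. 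If $A=1$ every class has index $0$ (the fibre Lefschetz number vanishes); if $A=-1$ the product formula for fixed-point indices over the Jiang-space fibre $S^1$ gives $\ind(f,\mathbb F)=\pm\ind(\bar f,\bar{\mathbb F})$, which is bounded since surface homeomorphisms have bounded fixed-point-class indices (Jiang--Guo). Hence $\Sigma_g\times S^1$ has BIPH.

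The main difficulty lies in the index computations, not the algebra, which Theorems \ref{main thm1} and \ref{main thm4} already supply. Three checks carry the weight: (i) the fixed-submanifold index formula $\ind(f,N)=\mathrm{sign}\det(I-df|_\nu)\cdot\chi(N)$ together with the verification that the constructed $N$ is connected and exhausts $\fix f$, so that no cancellation occurs within a fixed point class; (ii) the validity of the product formula for indices over torus fibres, which rests on $T^k$ and $S^1$ being Jiang spaces; and (iii) homotoping a homeomorphism of $\Sigma_g\times S^1$ into fibre-preserving form, which uses asphericity of the base and the Dehn--Nielsen--Baer realisation of $\bar\phi$ by a surface homeomorphism, so that the surface bounded-index theorem can be applied fibrewise. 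Point (i) is the crux: it is what converts the purely algebraic unboundedness of $\rk(\fix\phi)$ into unbounded fixed-point indices.
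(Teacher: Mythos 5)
Your proposal is correct and takes essentially the same route as the paper: the paper's proof simply observes that the two key constructions — the endomorphism of Proposition \ref{endo-fixed sbgp surface} (which your degree-$d$ fibrewise map $(x,y)\mapsto(x,\lambda(x)+dy)$ realizes geometrically) and the automorphism of Proposition \ref{thm for id surf} (which your fibrewise-affine homeomorphism with $\det(I-A)=\pm m$ and cyclic cokernel realizes, via a different but equivalent choice of matrix) — extend the constructions of \cite{ZZ23} from $(g,k)=(2,2)$ to all $g,k\geq 2$, and then invokes the index arguments of \cite{ZZ23} (and, for BIPH of $\Sigma_g\times S^1$, the Seifert-fibred Nielsen theory of \cite{Z12}) verbatim. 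The only difference is presentational: the paper outsources the topological content (your checks (i)--(iii): the fixed-submanifold index formula, the fibre-space index techniques over $S^1$ and $T^k$, and the reduction of homeomorphisms to fibre-preserving form) to \cite{ZZ23}, whereas you reconstruct it explicitly.
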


Although every hyperbolic group contains, up to isomorphism, only finitely many fixed subgroups of automorphism, by a recent deep result of Lazarovich \cite{Laz23}: \emph{every non-elementary hyperbolic group is finite index rigid} (that is, it does not contain isomorphic finite index subgroups of different indices), we have:

\begin{thm}\label{main thm5}
Let $H$ be a non-elementary torsion-free hyperbolic group. Then $G=H\times \Z^k$ ($k\geq2$) contains, up to isomorphism, infinitely many fixed subgroups of automorphism if and only if the first betti number of $H$ is non-zero.
\end{thm}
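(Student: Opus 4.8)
The plan is to reduce everything to the structure of $\aut(G)$, which is controlled by the center. Since $H$ is non-elementary hyperbolic and torsion-free, $Z(H)=1$, so $Z(G)=1\times\Z^k$. Any $\phi\in\aut(G)$ preserves $Z(G)$ and therefore restricts to a matrix $M\in GL_k(\Z)$ on $\Z^k$ and descends to an automorphism $\psi\in\aut(H)$ of $G/Z(G)\cong H$; expanding the homomorphism condition shows
\[\phi(h,v)=(\psi(h),\,\delta(h)+Mv)\]
for a unique $\delta\in\homo(H,\Z^k)$, and every triple $(\psi,M,\delta)$ arises this way. A direct computation gives
\[\fix\phi=\{(h,v)\mid \psi(h)=h,\ (I-M)v=\delta(h)\},\]
which fits into a central extension $1\to\fix(M)\to\fix\phi\to K\to1$, where $\fix(M)=\ker(I-M)\leq\Z^k$ is central and $K=\{h\in\fix\psi\mid \delta(h)\in\im(I-M)\}$.

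For the ``only if'' direction I prove the contrapositive. If the first Betti number of $H$ vanishes, then $H^{\mathrm{ab}}$ is finite, so $\homo(H,\Z^k)=0$ and every automorphism has $\delta=0$. Then $\fix\phi=\fix\psi\times\fix(M)$, where $\fix(M)$ is free-abelian of one of the $k+1$ ranks $0,1,\dots,k$, and Shor's finiteness theorem says the subgroups $\fix\psi$ ($\psi\in\aut(H)$) fall into only finitely many isomorphism classes. Hence $\fix\phi$ realizes only finitely many isomorphism types, as required.

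For the ``if'' direction, assume the first Betti number of $H$ is non-zero and fix a surjection $\eta\colon H\to\Z$. The strategy is to realize a finite-index subgroup of $H$ of every index as a fixed subgroup. Take $\psi=\id_H$ and $\delta=(\eta,0,\dots,0)\colon H\to\Z^k$, and for each $m\geq1$ choose $M=M_m\in GL_k(\Z)$ with $1$ not an eigenvalue (so $\fix(M_m)=0$ and the extension above is trivial, giving $\fix\phi_m\cong K$) arranged so that $\{n\in\Z\mid ne_1\in\im(I-M_m)\}=m\Z$; on the first two coordinates the block
\[M_m=\begin{pmatrix}0&-1\\ 1&2-m\end{pmatrix}\in GL_2(\Z),\qquad \det(I-M_m)=m,\]
does exactly this, augmented on the remaining coordinates by any fixed-point-free block. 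Then $K=\eta^{-1}(m\Z)$ has index $m$ in $H$, so $\fix\phi_m\cong K$ is an index-$m$ subgroup. Finally, since $H$ is non-elementary hyperbolic, Lazarovich's finite-index rigidity forces subgroups of distinct indices to be non-isomorphic, so $\{\fix\phi_m\}_{m\geq1}$ yields infinitely many isomorphism types.

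The main obstacle is the explicit construction in the ``if'' direction: one must exhibit an automorphism of $G$ whose fixed subgroup is a finite-index subgroup of prescribed index $m$, which forces the careful choice of $M_m\in GL_k(\Z)$ with $\det(I-M_m)\neq0$ and $\im(I-M_m)\cap\Z e_1=m\Z e_1$. The determinant constraint $\det M_m=\pm1$ is precisely why a single central $\Z$-factor ($k=1$) cannot produce all indices and why the hypothesis $k\geq2$ is essential. The second crucial—and non-formal—input is that for a general hyperbolic $H$, distinct indices need not a priori give non-isomorphic subgroups, so upgrading the family $\{\fix\phi_m\}$ to infinitely many isomorphism types genuinely relies on Lazarovich's rigidity theorem.
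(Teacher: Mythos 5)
Your proposal is correct and follows essentially the same route as the paper: the same normal form $\phi(h,v)=(\psi(h),\delta(h)+Mv)$ coming from triviality of $Z(H)$, Shor's finiteness combined with $\homo(H,\Z^k)=0$ when $\b_1(H)=0$, and, when $\b_1(H)\geq 1$, an explicit $2\times 2$ block $M_m\in GL_2(\Z)$ with $\det(I-M_m)=m$ forcing $\fix\phi_m\cong\eta^{-1}(m\Z)$, capped off by Lazarovich's finite-index rigidity. The paper's matrix $\mathcal{L}_0=\left(\begin{smallmatrix}m+1&m\\1&1\end{smallmatrix}\right)$ differs from your $M_m$ only cosmetically, and its reduction to $k=2$ via multiplicativity of aut-fixed subgroups plays the role of your fixed-point-free augmentation on the remaining coordinates.
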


The main idea and technique used in this paper, is to construct some special automorphisms, whose fixed subgroups satisfy certain conditions.
The paper is organized as follows. In Section \ref{sect 2}, we introduce some important facts on fixed subgroups in surface groups, and give a complete characterization of subgroups of $\pi_1(\Sigma_g)\times \Z^k$ for later use. In Section \ref{sect 2'}, we introduce three useful lemmas to facilitate our proof of the main theorems. In Section \ref{sect 3}, we prove the main results for fixed subgroups in $F_g\times \Z^k$.  In Section \ref{sect 4}, we first prove the main results for $\pi_1(\Sigma_g)\times \Z^k$, which are parallel to that for $F_g\times \Z^k$, and then give some discussions on Jiang's (Bounded Index Property) Question \ref{Jiang's question} for aspherical manifolds in Nielsen fixed point theory. Finally in Section \ref{sect 5}, we prove Theorem \ref{main thm5} for hyperbolic groups.

\section{Preliminary}\label{sect 2}

In general, subgroups of direct products of abstract groups are complicated, see \cite{Bri23, BH07, BHMS09} for some new and deep results. In this section, we will give a complete characterization of subgroups of $G=\pi_1(\Sigma_g)\times \Z^k$ or $F_g\times \Z^k$, where $\Z^k$ is viewed as an addition group and an element $v\in\Z^k$ is a column vector.

\subsection{Subgroups}
It is well known that a subgroup of $\Z^k$ is isomorphic to $\Z^s$ for some $s\leq k$, and a subgroup of a free group is also free. Moreover,

\begin{lem}[Schreier's formula]\label{F_2 subgp}
Let $H$ be a subgroup of $F_g$ ($g\geq 2$) with index $[F_g : H]=m$. Then $$H\cong F_{m(g-1)+1}.$$
Moreover, $H\cong F_{\aleph_0}$ if $H$ is nontrivial and normal with index $[F_g : H]$ infinite.
\end{lem}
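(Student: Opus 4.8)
The plan is to work topologically, realising $F_g$ as the fundamental group of a rose $R_g$ (a wedge of $g$ circles) and translating statements about subgroups into statements about coverings, in the spirit of Stallings' graph method.

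First, for the finite-index formula, I would use the covering dictionary. A subgroup $H\le F_g$ of index $m$ corresponds to a connected $m$-sheeted covering $p\colon \hat R\to R_g$ with $p_*\pi_1(\hat R)=H$. Since $R_g$ is a graph with one vertex and $g$ edges, $\hat R$ is a finite connected graph with exactly $m$ vertices (the sheets, i.e.\ the cosets) and $mg$ edges (each edge of $R_g$ lifts to $m$ edges). The fundamental group of a connected graph is free of rank $E-V+1=1-\chi$; this already yields Nielsen--Schreier (every subgroup of a free group is free) and, inserting the counts, gives $H\cong\pi_1(\hat R)\cong F_{mg-m+1}=F_{m(g-1)+1}$.

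Second, for the ``moreover'' statement, I would first note that by Nielsen--Schreier $H$ is free, and as a subgroup of the countable group $F_g$ it is countable, so $H\cong F_r$ for some cardinal $r\le\aleph_0$; since $H\ne 1$ we have $r\ge 1$, and it remains only to show $r$ is infinite. I would argue by contradiction: suppose $H$ were finitely generated. Normality makes the covering $p\colon\hat R\to R_g$ associated to $H$ \emph{regular}, with deck group $Q=F_g/H$ acting freely and transitively on the vertex set of $\hat R$. Finite generation of $H$ means the core graph $\Gamma$ of $\hat R$ (the union of all reduced circuits, obtained by pruning the hanging trees) is finite. Because deck transformations are graph automorphisms, they permute the circuits and hence preserve $\Gamma$; combined with vertex-transitivity of the $Q$-action this forces either every vertex or no vertex of $\hat R$ to lie on $\Gamma$. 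As $H\ne 1$ there is at least one circuit, so every vertex lies in $\Gamma$, whence $\hat R=\Gamma$ is finite and $[F_g:H]$ equals the number of vertices of $\hat R$, contradicting the infinite-index hypothesis. Therefore $r=\aleph_0$ and $H\cong F_{\aleph_0}$.

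The finite-index half is a routine Euler-characteristic count once the covering dictionary is set up; the genuine content lies in the ``moreover'' part, namely ruling out finite rank for a nontrivial normal subgroup of infinite index. The crux is the interplay between \emph{normality} (regularity and transitivity of the deck action) and \emph{finite generation} (finiteness of the core), and packaging the statement ``a nontrivial finitely generated normal subgroup of a free group has finite index'' cleanly enough to conclude infinite rank whenever the index is infinite.
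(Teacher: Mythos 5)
The paper gives no proof of this lemma at all: it is stated as ``Schreier's formula'' and used as classical background (the finite-index part is the Nielsen--Schreier index formula; the ``moreover'' part is the classical fact that a nontrivial finitely generated normal subgroup of a free group must have finite index). So there is nothing in the paper to compare against line by line; your covering-space argument is the standard proof of both halves, and it is in the same spirit as the paper's own proof of the surface analogue (Lemma~2.2), which also passes to covering spaces. Your first half (Euler characteristic count: $m$ vertices, $mg$ edges, rank $=E-V+1=m(g-1)+1$) is correct and routine. The second half is also correct in substance: normality makes the cover regular, the deck group $Q=F_g/H$ acts transitively on the vertices of $\hat R$ (they form the single fiber over the unique vertex of the rose), the core is invariant under deck transformations, and transitivity then forces the core to contain either no vertex or every vertex; since $H\neq 1$ the core is nonempty, so a finite core would force $[F_g:H]=|V(\hat R)|<\infty$, the desired contradiction.

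One caveat: your parenthetical definition of the core would break the argument if taken literally. In a covering of a rose every vertex has valence $2g$, so there are no valence-one vertices and ``pruning hanging trees'' does nothing; with that reading $\Gamma=\hat R$ always, and the claim ``$H$ finitely generated $\Rightarrow$ $\Gamma$ finite'' is false for infinite covers. Similarly, if ``reduced circuit'' means merely a reduced closed path based at some vertex, then every vertex of a connected graph with nontrivial $\pi_1$ lies on one (run a shortest reduced path out to a cycle, around it, and back), and again $\Gamma=\hat R$. What your argument needs is $\Gamma=$ the union of all \emph{cyclically} reduced circuits, equivalently the image in $\hat R$ of the minimal $H$-invariant subtree of the universal cover (Stallings' core). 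With that definition both key claims do hold: for nontrivial finitely generated $H$ this union is a finite subgraph, and it is visibly permuted, hence preserved, by deck transformations; your contradiction then goes through exactly as written.
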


For closed surface groups, we have the following well-known result.

\begin{lem}\label{subgp of surface}
Let $H$ be a subgroup of $\pi_1(\Sigma_g)$ ($g\geq 2$). Then
\begin{enumerate}
  \item $H\cong F_t$ ($t\geq 0$) or $F_{\aleph_0}$, if the index $[\pi_1(\Sigma_g):H]=\infty$;
  \item $H\cong \pi_1(\Sigma_{m(g-1)+1})$ if the index $[\pi_1(\Sigma_g):H]=m<\infty$.
\end{enumerate}
Moreover, $H\cong F_{\aleph_0}$ if $H$ is nontrivial and normal with index $[\pi_1(\Sigma_g) : H]$ infinite.
\end{lem}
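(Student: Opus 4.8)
The plan is to exploit the correspondence between subgroups of $\pi_1(\Sigma_g)$ and covering spaces of $\Sigma_g$, and then read off the isomorphism type of each subgroup from the topology of the corresponding cover. For any subgroup $H\leq \pi_1(\Sigma_g)$, covering space theory provides a connected covering $p\colon \widetilde{\Sigma}\to \Sigma_g$ with $p_*\pi_1(\widetilde{\Sigma})=H$, so that $H\cong \pi_1(\widetilde{\Sigma})$; moreover the number of sheets of the cover equals the index $[\pi_1(\Sigma_g):H]$. The argument then splits into the finite-index and infinite-index cases according to whether $\widetilde{\Sigma}$ is compact.

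\medskip

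First I would treat the finite-index case (2). When $[\pi_1(\Sigma_g):H]=m<\infty$, the cover $\widetilde{\Sigma}$ is a compact surface that is an $m$-sheeted cover of $\Sigma_g$, hence closed and orientable. By multiplicativity of the Euler characteristic under finite covers, $\chi(\widetilde{\Sigma})=m\,\chi(\Sigma_g)=m(2-2g)$, so $\widetilde{\Sigma}$ is the closed orientable surface of genus $m(g-1)+1$. Therefore $H\cong\pi_1(\widetilde{\Sigma})\cong\pi_1(\Sigma_{m(g-1)+1})$, as claimed.

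\medskip

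Next I would handle the infinite-index case (1). If $[\pi_1(\Sigma_g):H]=\infty$, the cover $\widetilde{\Sigma}$ is a noncompact surface. A standard fact is that a connected noncompact surface is homotopy equivalent to a wedge of circles (equivalently, it deformation retracts onto a graph, since a noncompact surface has the homotopy type of a one-dimensional CW complex), and hence its fundamental group is free. Thus $H\cong\pi_1(\widetilde{\Sigma})$ is free, i.e.\ $H\cong F_t$ for some finite $t\geq 0$ or $H\cong F_{\aleph_0}$, proving (1). For the final assertion, suppose $H$ is nontrivial and normal with $[\pi_1(\Sigma_g):H]=\infty$. Being normal, $H$ has a quotient $\pi_1(\Sigma_g)/H$ that is infinite, and since $H$ is a nontrivial free group it is either $F_{\aleph_0}$ or $F_t$ with $t\geq 1$ finite. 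The task is to rule out the possibility that a free group of finite positive rank arises as such a normal subgroup of infinite index; one argues that an infinite-index normal subgroup of $\pi_1(\Sigma_g)$ cannot be finitely generated, for instance via the Euler-characteristic / deficiency obstruction for surface groups (a finitely generated normal subgroup of infinite index in a one-ended group such as $\pi_1(\Sigma_g)$ must be trivial). Hence $H\cong F_{\aleph_0}$.

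\medskip

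The main obstacle is the last claim, that a nontrivial infinite-index normal subgroup must have infinite rank rather than finite positive rank. The earlier parts are essentially bookkeeping with covering spaces and Euler characteristics, but excluding finitely generated free normal subgroups of infinite index requires a genuine structural input about surface groups (one-endedness, or the behavior of the associated regular cover, or a theorem on finitely generated normal subgroups); I would cite or invoke this as the essential step, since it is exactly what forces the rank to be $\aleph_0$ rather than finite.
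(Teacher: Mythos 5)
Your proposal follows essentially the same route as the paper's proof: the covering-space dictionary between subgroups and covers, Euler-characteristic multiplicativity for the finite-index case, the fact that noncompact surfaces have free fundamental group for the infinite-index case (the paper cites Jaco and Stillwell for this), and an appeal to an external structural theorem for the final assertion about nontrivial normal subgroups of infinite index (the paper simply asserts this, pointing to Bridson--Howie's result on normalisers in limit groups). One caution on that last step: your parenthetical justification --- that a finitely generated normal subgroup of infinite index in a \emph{one-ended} group must be trivial --- is false as a general principle; for instance $F_2\times\Z$ is one-ended and contains $F_2\times\{1\}$ as a finitely generated normal subgroup of infinite index. The fact you need is genuinely specific to surface groups (e.g.\ Greenberg's theorem that a nontrivial finitely generated normal subgroup of a Fuchsian group has finite index, or the limit-group result of Bridson and Howie that the paper cites), so the invocation should be of such a theorem rather than of one-endedness.
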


\begin{proof}
Let $X_H$ be the covering space of $\Sigma_g$ with fundamental group $\pi_1(X_H)=H$. Then

(1) if $[\pi_1(\Sigma_g):H]=\infty$, then $X_H$ is a noncompact surface, and hence $H$ is a free group \cite{Ja70} (it is clear that $H$ is a free group when $X_H$ is a punctured surface, and when $X_H$ is an infinite surface, for example, one can see \cite[pp.142-144]{St93} that $H$ is also a free group). If $H$ is nontrivial and normal with index $[\pi_1(\Sigma_g) : H]$ infinite, then $H\cong F_{\aleph_0}$. (In fact, a similar result has been proven to hold for limit groups by Bridson and Howie \cite{BH07'}.)

(2) if $[\pi_1(\Sigma_g):H]=m$, then $X_H$ is a closed orientable surface $\Sigma_r$ with Euler characteristic
$$\chi(\Sigma_r)=2-2r=m(2-2g).$$ Therefore, $r=m(g-1)+1$ and hence $H\cong \pi_1(\Sigma_{m(g-1)+1})$.
\end{proof}

By Delgado and Ventura \cite[Corollary 1.7]{DV13} which gave a characterization of subgroups of $F_g\times \Z^k$, we have

\begin{lem}[Delgado-Ventura]\label{DV subgp}
Let $G=F_g\times \Z^k$ ($g\geq 2, k\geq 1$). Then

(1) Every finitely generated subgroup of $G$ has the following form:
$$F_t\times \Z^s, ~~t\geq 0, ~0\leq s\leq k;$$

(2) Every infinitely generated subgroup of $G$ has the following form:
$$F_{\aleph_0}\times \Z^s, ~~ ~0\leq s\leq k.$$
\end{lem}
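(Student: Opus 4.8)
The plan is to exploit the projection onto the free factor together with the fact that the kernel of that projection lands in the center of $G$. Write $G=F_g\times\Z^k$ and let $\pi\colon G\to F_g$ denote the projection onto the first coordinate, identifying $\Z^k$ with the central subgroup $\{1\}\times\Z^k$. Given a subgroup $H\leq G$, I would restrict $\pi$ to $H$ and study the resulting short exact sequence
$$1\to H\cap\Z^k\to H\to \pi(H)\to 1,$$
where the first map is inclusion and the second is $\pi|_H$. The kernel $H\cap\Z^k$ is a subgroup of $\Z^k$, hence isomorphic to $\Z^s$ for some $0\leq s\leq k$; the image $\pi(H)$ is a subgroup of the free group $F_g$, hence free by the Nielsen--Schreier theorem already recalled above.

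The key structural step is to upgrade this extension to a direct product. Since $\pi(H)$ is free it is projective, so the surjection $\pi|_H$ admits a homomorphic section $\sigma\colon\pi(H)\to H$; thus the extension splits and $H=(H\cap\Z^k)\cdot\sigma(\pi(H))$. Because $H\cap\Z^k$ lies in the center of $G$, it commutes with every element of $H$, so the action defining the semidirect product is trivial and the two factors intersect trivially. Hence $H\cong \pi(H)\times\Z^s$ with $\pi(H)$ free and $0\leq s\leq k$.

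It then remains to split into cases according to whether $H$ is finitely generated. If $H$ is finitely generated, then $\pi(H)$, being a quotient of $H$, is a finitely generated free group, so $\pi(H)\cong F_t$ for some $t\geq 0$ and $H\cong F_t\times\Z^s$, giving the form in (1). If $H$ is infinitely generated, then $\pi(H)$ cannot have finite rank---otherwise $H\cong F_t\times\Z^s$ would be finitely generated---so $\pi(H)$ is an infinitely generated free group; as $G$ is countable, this forces $\pi(H)\cong F_{\aleph_0}$, yielding $H\cong F_{\aleph_0}\times\Z^s$ as in (2).

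I expect the main obstacle to be the careful justification of the direct-product decomposition rather than the case analysis: one must verify both that the extension splits, using the projectivity (freeness) of the quotient $\pi(H)$, and that the centrality of $H\cap\Z^k$ promotes the resulting semidirect product to a genuine direct product. Merely recording the isomorphism types of the kernel $\Z^s$ and the cokernel $\pi(H)$ is not enough; the central splitting is what pins down $H$ up to isomorphism and rules out any twisting between the free and free-abelian parts.
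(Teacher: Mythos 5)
Your proof is correct. The paper does not actually prove Lemma \ref{DV subgp}---it is quoted from \cite[Corollary 1.7]{DV13}---but your argument (project to $F_g$, split the resulting extension using the projectivity of the free group $\pi(H)$, use the centrality of $H\cap\Z^k$ to promote the semidirect product to a direct product, then sort out the rank of $\pi(H)$ via finite generation and countability) is exactly the technique the paper itself uses to prove the parallel surface-group result, Lemma \ref{surface subgp}, whose proof handles the free-image case by precisely your section construction; so your route is essentially the paper's own.
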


For $\pi_1(\Sigma_g)\times \Z^k$, we prove a parallel characterization of subgroups as follows.

\begin{lem}\label{surface subgp}
Let $G=\pi_1(\Sigma_g)\times \Z^k$ ($g\geq 2, k\geq 1$). Then

(1) Every finitely generated subgroup of $G$ has one of the following forms:
$$F_t\times \Z^s, ~\quad \pi_1(\Sigma_{m(g-1)+1})\times \Z^s, \quad t\geq 0, ~m\geq 1, ~0\leq s\leq k; $$

(2) Every infinitely generated subgroup of $G$ has the following form:
$$F_{\aleph_0}\times \Z^s, ~~ ~0\leq s\leq k.$$
\end{lem}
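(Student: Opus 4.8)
The plan is to reduce the structure of an arbitrary subgroup $K \leq G = \pi_1(\Sigma_g) \times \Z^k$ to the already-established classifications: the subgroup structure of $\pi_1(\Sigma_g)$ (Lemma \ref{subgp of surface}) and of $\Z^k$. First I would consider the projection $p\colon G \to \pi_1(\Sigma_g)$ onto the first factor, with kernel $\{1\} \times \Z^k$. Restricting $p$ to $K$ gives a short exact sequence
\[
1 \To K \cap (\{1\}\times \Z^k) \To K \xrightarrow{\ p|_K\ } p(K) \To 1.
\]
The kernel $K \cap (\{1\}\times \Z^k)$ is a subgroup of $\Z^k$, hence isomorphic to $\Z^s$ for some $0\leq s\leq k$, and it is central in $K$ (indeed in $G$). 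The image $p(K)$ is a subgroup of $\pi_1(\Sigma_g)$, so by Lemma \ref{subgp of surface} it is isomorphic to one of $F_t$ ($t\geq 0$), $F_{\aleph_0}$, or $\pi_1(\Sigma_{m(g-1)+1})$.

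Next I would argue that the extension splits and in fact is a direct product, so that $K \cong p(K) \times \Z^s$. The key point is that when $p(K)$ is free (either $F_t$ or $F_{\aleph_0}$), every extension by a central subgroup splits because free groups are projective, giving $K \cong p(K) \times \Z^s$ after identifying the central $\Z^s$ as a direct factor; here one uses that the conjugation action of $K$ on its central kernel is trivial. When $p(K) \cong \pi_1(\Sigma_{m(g-1)+1})$ is a closed surface group, splitting is less automatic, so I would instead observe that $K$ sits inside $\pi_1(\Sigma_g)\times \Z^k$, whose center is exactly $\{1\}\times \Z^k$ (since $\pi_1(\Sigma_g)$ is centerless for $g\geq 2$). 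Thus $\Z^s = K\cap(\{1\}\times\Z^k)$ is central in $K$, and I would use a section of $p|_K$ built from a set-theoretic lift together with the fact that the relevant extension class in $H^2(\pi_1(\Sigma_{m(g-1)+1}); \Z^s)$ can be killed by choosing coordinates: concretely, lift a generating set of $p(K)$ to $K$, and after subtracting off $\Z^k$-components the lifted relators land in $\{1\}\times\Z^k$, producing a homomorphic section precisely because the surface relator is a single product of commutators, whose image is forced to be trivial.

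The main obstacle I anticipate is this splitting step for the surface-group quotient: showing that a central $\Z^s$-extension of $\pi_1(\Sigma_{m(g-1)+1})$ sitting inside $G$ is necessarily a direct product rather than a nontrivial central extension. I expect this to be handled cleanly by exploiting that $K$ embeds in the direct product $G$, so any element of $K$ has the form $(w, v)$ with $w\in\pi_1(\Sigma_g)$ and $v\in\Z^k$; a lift of $p(K)$ obtained by choosing, for each element of a generating set, a preimage in $K$, will automatically commute correctly because the two coordinates commute inside $G$, and one checks the surface relation is satisfied on the nose after absorbing any discrepancy into the central factor. Finally, collecting the ranges of $s$, $t$, and $m$ yields exactly the list in parts (1) and (2): finitely generated subgroups give $F_t\times\Z^s$ or $\pi_1(\Sigma_{m(g-1)+1})\times\Z^s$, and infinitely generated ones force $p(K)\cong F_{\aleph_0}$, hence $K\cong F_{\aleph_0}\times\Z^s$. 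The distinction between the finitely and infinitely generated cases is controlled entirely by whether $p(K)$ is finitely generated, since $\Z^s$ is always finitely generated.
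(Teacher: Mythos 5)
Your proposal is correct and follows essentially the same route as the paper: project to $\pi_1(\Sigma_g)$, note the kernel is a central $\Z^s$, split the extension (in the free case by lifting a basis, in the surface case by lifting the canonical generators and observing that the surface relator's image in $\Z^k$ is a product of commutators in an abelian group, hence trivial), and conclude $K\cong p(K)\times\Z^s$ before invoking the classification of subgroups of $\pi_1(\Sigma_g)$. The paper's proof makes the same splitting map explicit via $a_i\mapsto(a_i,s_i)$, $b_i\mapsto(b_i,t_i)$ and the isomorphism $h\mapsto\big((\iota\comp p)(h),\,h\cdot(\iota\comp p)(h^{-1})\big)$, so no genuinely different ideas are involved.
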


\begin{proof}
Let $H$ be a subgroup of $G=\pi_1(\Sigma_g)\times \Z^k$. Now we consider the projection $$p:  \pi_1(\Sigma_g)\times \Z^k\to  \pi_1(\Sigma_g), \quad p(u, v)=u,$$
and let $p_H:  H \to p(H)\leq \pi_1(\Sigma_g)$ be the restriction of $p$ on $ H $.
Then we have the natural short exact sequence
$$1\to \ker(p_H)\hookrightarrow H  \xrightarrow{p_H} p(H)\to 1.$$
Note that $\ker(p_H)\cong \Z^s$ for some $s\leq k$ is a subgroup of $\Z^k$, and
$p(H)$ is a subgroup of $\pi_1(\Sigma_g)$.

If $p(H)$ is free, we can define a monomorphism $\iota:p(H)\to  H$
sending each element of a chosen free basis for $p(H)$ back to an arbitrary preimage, i.e, $p\comp \iota$ is the identity of $p(H)$. If $p(H)\leq \pi_1(\Sigma_g)$ is not free, then $p(H)\cong\pi_1(\Sigma_n)$ for $n={m(g-1)+1}$ and $m=[H : p(H)]<\infty$. Pick a canonical presentation
$$p(H)=\langle a_1,b_1,\cdots ,a_n,b_n\mid [a_1,b_1]\cdots [a_n,b_n]=1 \rangle.$$
Note that all the elements $a_i, b_i\in \pi_1(\Sigma_g)$. Now for each $i=1,\ldots, n$, pick any two coordinate components $s_i\in p^{-1}(a_i)\cap \Z^k$ and $t_i\in p^{-1}(b_i)\cap \Z^k$, and define
\begin{eqnarray}
\iota: p(H)&\to& H\subset \pi_1(\Sigma_g)\times \Z^k\notag\\
a_i &\mapsto& (a_i, ~s_i)\notag,\\
b_i &\mapsto& (b_i, ~t_i)\notag.
\end{eqnarray}
Notice $\iota$ is well defined because $[s_1,t_1]\cdots [s_n,t_n]=0\in \Z^k$, and $p\comp \iota$ is also the identity of $p(H)$.
It implies  that
the above exact sequence is split.

Note that $\ker(p_H)\cong \Z^s$ is abelian, by straightforward calculations, we can see the following
map is an isomorphism:
\begin{eqnarray}%\label{equ 4.2}
\Psi:  H  &\to& \iota(p(H))\times \ker(p_H)\notag\\
 h&\mapsto& \big((\iota\comp p)(h), ~~h\cdot(\iota\comp p)(h^{-1})\big)\notag.
\end{eqnarray}
Therefore,
$$
H =\iota(p(H))\times \ker(p_H)\cong p(H)\times\Z^s (s\leq k).
$$
Note that $p(H)$ is a subgroup of $\pi_1(\Sigma_g)$, then the conclusion follows from Lemma \ref{subgp of surface}.
\end{proof}

\subsection{Aut-fixed subgroups}

Recall that a group $H$ is \emph{aut-fixed up to isomorphism} in $G$, if there exists $\phi\in\aut(G)$ such that $\fix\phi\cong H$. In particular, if $\fix\phi= H$, we call $H$ \emph{aut-fixed} in $G$.
Now we show that aut-fixed subgroups have multiplicativity.

\begin{lem}[Multiplicativity of aut-fixed subgroup]\label{product auo-fixed sbgp}
Let $H_i$ be an aut-fixed subgroup (up to isomorphism) of $G_i$ for $i=1,\ldots, n$. Then $H_1\times\cdots\times H_n$ is an aut-fixed subgroup (up to isomorphism) of $G_1\times\cdots\times G_n$.
\end{lem}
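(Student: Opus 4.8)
The plan is to realise the product $H_1\times\cdots\times H_n$ directly as a fixed subgroup, by bundling together the automorphisms that fix the individual factors. By hypothesis, for each $i=1,\ldots,n$ there is an automorphism $\phi_i\in\aut(G_i)$ with $\fix\phi_i\cong H_i$ (and with $\fix\phi_i=H_i$ in the stronger, not-up-to-isomorphism case). First I would assemble these into the coordinatewise map
\[
\Phi=\phi_1\times\cdots\times\phi_n:\ G_1\times\cdots\times G_n\to G_1\times\cdots\times G_n,\qquad \Phi(g_1,\ldots,g_n)=(\phi_1(g_1),\ldots,\phi_n(g_n)).
\]
Since each $\phi_i$ is a homomorphism, so is $\Phi$; and since each $\phi_i$ is bijective, $\Phi$ is bijective with inverse $\phi_1^{-1}\times\cdots\times\phi_n^{-1}$. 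Hence $\Phi\in\aut(G_1\times\cdots\times G_n)$.

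The key step is the coordinatewise splitting of the fixed subgroup. An element $(g_1,\ldots,g_n)$ lies in $\fix\Phi$ precisely when $\phi_i(g_i)=g_i$ for every $i$, i.e. when $g_i\in\fix\phi_i$ for every $i$. Thus
\[
\fix\Phi=\fix\phi_1\times\cdots\times\fix\phi_n.
\]
In the not-up-to-isomorphism case this already yields $\fix\Phi=H_1\times\cdots\times H_n$, so the product is aut-fixed on the nose; in the up-to-isomorphism case I would compose with the factorwise isomorphisms $\fix\phi_i\cong H_i$ to obtain $\fix\Phi\cong H_1\times\cdots\times H_n$, as desired.

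There is essentially no serious obstacle here: the statement is formal once one observes that a product of automorphisms is again an automorphism and that being a fixed point of such a product is a purely coordinatewise condition. If one prefers to keep the notation lighter, the whole thing reduces to the case $n=2$ by an immediate induction, the inductive step being the same observation applied to $(G_1\times\cdots\times G_{n-1})\times G_n$. The only point worth a little care is to run the two versions (exact equality and up to isomorphism) in parallel, which the displayed identity for $\fix\Phi$ handles uniformly.
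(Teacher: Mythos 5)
Your proof is correct and follows essentially the same route as the paper: form the coordinatewise product automorphism $\phi_1\times\cdots\times\phi_n$ and observe that its fixed subgroup is exactly $\fix\phi_1\times\cdots\times\fix\phi_n$, which handles both the exact and the up-to-isomorphism versions. No gaps; your write-up is just a more detailed version of the paper's argument.
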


\begin{proof}
Since $H_i$ is an aut-fixed subgroup (up to isomorphism) of $G_i$, we have $H_i\cong \fix\phi_i$ for some $\phi_i\in \aut(G_i)$. Thus \begin{eqnarray}
\phi_1\times\cdots\times \phi_n: G_1\times\cdots\times G_n&\to& G_1\times\cdots\times G_n,\notag\\
(u_1,\ldots, u_n)&\mapsto& (\phi_1(u_1), \ldots, \phi_n(u_n))\notag
\end{eqnarray}
is an automorphism of $G_1\times\cdots\times G_n$, and
$$H_1\times\cdots\times H_n\cong \fix\phi_1\times\cdots\times \fix\phi_n\cong \fix(\phi_1\times\cdots\times \phi_n).$$
\end{proof}

Note that some subgroups of $\Z^k$ are not aut-fixed, e.g. $2\Z$ is not aut-fixed in $\Z$, but for aut-fixed up to isomorphism, we have

\begin{lem}\label{fixed sbgp in Z^K}
Every subgroup of $\Z^k$ is aut-fixed up to isomorphism.
\end{lem}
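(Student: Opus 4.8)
The plan is to identify $\aut(\Z^k)$ with the matrix group $\mathrm{GL}(k,\Z)$, so that each automorphism $\phi$ is given by a matrix $A\in\mathrm{GL}(k,\Z)$ acting on column vectors. Under this identification the fixed subgroup becomes
$$\fix\phi=\{v\in\Z^k\mid Av=v\}=\ker(A-I)\cap\Z^k,$$
the integer kernel of the integer matrix $A-I$. A standard fact about integer matrices is that this integer kernel is itself a subgroup of $\Z^k$, hence free-abelian, and its rank equals the dimension of $\ker(A-I)$ over $\Q$; that is, the rank of $\fix\phi$ equals the geometric multiplicity of the eigenvalue $1$ of $A$ (equivalently $k-\rk_\Q(A-I)$).

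Next I would reduce the statement to a realisation problem. Since every subgroup of $\Z^k$ is isomorphic to $\Z^s$ for some $0\leq s\leq k$, it suffices to exhibit, for each such $s$, an automorphism $\phi\in\aut(\Z^k)$ with $\fix\phi\cong\Z^s$. By the previous paragraph this amounts to producing a matrix $A\in\mathrm{GL}(k,\Z)$ for which the eigenvalue $1$ has multiplicity exactly $s$, i.e. $\dim_\Q\ker(A-I)=s$.

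The construction I would use is the diagonal block matrix $A=I_s\oplus(-I_{k-s})$, which has $s$ diagonal entries equal to $1$ and $k-s$ equal to $-1$. This lies in $\mathrm{GL}(k,\Z)$ since $\det A=(-1)^{k-s}=\pm1$. Moreover $A-I=0_s\oplus(-2\,I_{k-s})$, whose integer kernel is precisely the span of the first $s$ standard basis vectors, so $\fix\phi=\Z^s$. Taking $s=k$ recovers the identity automorphism and $s=0$ the map $A=-I_k$ with trivial fixed subgroup, and the intermediate values are covered by the block form. Combining these cases over all $0\leq s\leq k$ shows that every isomorphism type of subgroup of $\Z^k$ is realised, which is exactly the claim.

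I do not anticipate a genuine obstacle here: the only point requiring a little care is the clean translation between the fixed subgroup and the integer kernel of $A-I$, together with the observation that the rank of that integer kernel matches the rational dimension. Once that is in place the explicit $\pm1$ diagonal matrices finish the argument immediately, since they are simultaneously invertible over $\Z$ and have a completely transparent fixed subgroup.
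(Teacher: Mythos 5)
Your proof is correct and uses essentially the same construction as the paper: the paper takes the automorphism that is the identity on the first $s$ coordinate factors of $\Z^k$ and $-\id$ on the remaining $k-s$, which is exactly your block matrix $I_s\oplus(-I_{k-s})$. Your extra discussion of $\ker(A-I)$ and eigenvalue multiplicities is fine but not needed once the explicit diagonal example is written down.
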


\begin{proof}
Let $H$ be a subgroup of $\Z^k$. Then $H\cong \Z^s$ for some $0\leq s\leq k$. Pick $\phi_i$ the identity of $\Z$ for $i=1,\ldots,s$ and $\phi_j=-\id$ of $\Z$, that is, $\phi_j(a)=-a$ for $j=s+1,\ldots n$, then $\fix(\phi_1\times\cdots\times \phi_n)\cong \Z^s.$
\end{proof}

\begin{prop}[Bestvina-Handel]\label{fixed subgp in free gp}
A subgroup $H\leq F_n$ is aut-fixed up to isomorphism if and only if $H\cong F_t$ for $0\leq t\leq n$.
\end{prop}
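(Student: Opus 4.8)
The statement is an ``if and only if,'' so the plan is to treat the two directions separately. For necessity, suppose $H$ is aut-fixed up to isomorphism, say $H\cong\fix\phi$ for some $\phi\in\aut(F_n)$. Since $\fix\phi$ is a subgroup of the free group $F_n$, it is itself free by the Nielsen--Schreier theorem, and the rank inequality of Bestvina--Handel and Imrich--Turner recalled in the introduction gives $\rk(\fix\phi)\le\rk(F_n)=n$. Hence $H\cong F_t$ with $0\le t\le n$. This direction is really just an application of the cited deep theorem, so no new work is needed here.

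For sufficiency I would exhibit, for each $0\le t\le n$, an explicit automorphism whose fixed subgroup is $F_t$. Writing $F_n=\langle x_1,\dots,x_n\rangle$, define $\phi_t$ on generators by $\phi_t(x_i)=x_i$ for $1\le i\le t$ and $\phi_t(x_j)=x_j^{-1}$ for $t<j\le n$. This extends to an endomorphism, and since applying it twice returns each generator to itself it is an involution, hence an automorphism. Clearly $\langle x_1,\dots,x_t\rangle\subseteq\fix\phi_t$, so it remains to prove the reverse inclusion, i.e.\ that no ``extra'' fixed elements appear.

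The key observation---and the step I expect to carry the weight---is that $\phi_t$ preserves reduced words. Indeed $\phi_t$ acts letterwise, fixing each occurrence of $x_i^{\pm1}$ with $i\le t$ and inverting each occurrence of $x_j^{\pm1}$ with $j>t$; checking the three cases (both letters of low index, both of high index, or one of each) shows that a reduced word is sent to a reduced word of the same length, because in the mixed case the two letters involve distinct generators. Therefore, if $w$ is reduced and $\phi_t(w)=w$, then $w$ and $\phi_t(w)$ are reduced words of equal length representing the same element, so by uniqueness of reduced form in a free group they are letter-for-letter identical. Since every high-index letter is inverted by $\phi_t$ and no letter equals its own inverse, $w$ can contain no high-index letter, i.e.\ $w\in\langle x_1,\dots,x_t\rangle$. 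This gives $\fix\phi_t=\langle x_1,\dots,x_t\rangle\cong F_t$, covering the extreme cases $t=0$ (all generators inverted, trivial fixed subgroup) and $t=n$ (the identity) as well.

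The main obstacle is not the argument itself but the \emph{choice} of automorphism: the ``obvious'' transvection-type maps such as $x_j\mapsto x_jx_1$ fail, since commuting transvections produce unintended fixed elements like $x_jx_{j'}^{-1}$ and enlarge the fixed subgroup beyond $\langle x_1,\dots,x_t\rangle$. Choosing the generator-inverting involution is precisely what makes the length and normal-form bookkeeping clean and forces the fixed subgroup to have rank exactly $t$.
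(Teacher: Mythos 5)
Your proposal is correct and follows essentially the same route as the paper: necessity via Nielsen--Schreier together with the Bestvina--Handel rank bound, and sufficiency via the automorphism fixing the first $t$ generators and inverting the rest. The only difference is that you verify the asserted equality $\fix\phi_t=\langle x_1,\dots,x_t\rangle$ in detail (a correct reduced-word/letterwise argument), whereas the paper simply states it.
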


\begin{proof}
Let $\phi$ be an automorphism of $F_n$. Note that a subgroup of a free group is again free, we have $\fix\phi\cong F_t$ for some $t\leq n$ by Bestvina and Handel's deep result. Conversely, for any $t\leq n$, let
$$\phi_t: F_n\to F_n=\langle a_1, \ldots, a_n\rangle, \quad a_i\mapsto a_i, ~i\leq t; ~a_j\mapsto a_j^{-1}, ~j>t.$$
Then $\fix\phi_t=\langle a_1, \ldots, a_t\rangle\cong F_t$.
\end{proof}

In \cite{JWZ11}, Jiang, Wang and Zhang  studied the fixed subgroups of surface automorphisms, and showed that $\rk(\fix\phi)\leq 2g$ for every automorphism $\phi\in\aut(\pi_1(\Sigma_g))$, by using such an assertion:

\emph{For a standard homeomorphism $f: \Sigma_g\to\Sigma_g$ in the sense of Nielsen-Thurston classification, each Nielsen path of $f$ can be deformed (rel. endpoints) into the fixed point set $\fix f$.}

Summarise ``Proof of Corollary T and  Proof of Theorem 3.2 for pseudo-Anosov $\varphi$" in \cite[pp. 2304-2306]{JWZ11}, we have a key lemma as follows.

\begin{lem}[Jiang-Wang-Zhang]\label{JWZ mafandeyinli}
(1) Let $\Sigma$ be a compact surface with negative Euler characteristic, and let $\phi: \pi_1(\Sigma, x)\to \pi_1(\Sigma, x)$ be an automorphism induced by a pseudo-Anosov homeomorphism  $f$ of $\Sigma$. Then, $\fix\phi=1$ if $\Sigma$ is closed, while $\fix\phi=\langle c\rangle\cong \Z$ if the boundary $\partial \Sigma$ is nonempty and $f$ fixes some component $c$ of $\partial \Sigma$ pointwise with the base point $x\in c$.

(2) Let $\Sigma_g=\Sigma'\cup\Sigma''$ be a splitting of the closed surface $\Sigma_g$ of genus $g\geq 2$, where $\Sigma'$ and $\Sigma''$ are both compact subsurfaces of $\Sigma_g$ with negative Euler characteristics and with the same boundaries $\partial\Sigma'=\partial\Sigma''$. If $f'$ is the identity of $\Sigma'$ and $f''$ is a pseudo-Anosov homeomorphism of $\Sigma''=\overline{\Sigma_g-\Sigma'}$ fixing the boundary $\partial\Sigma''$. Then $f=f'\cup f''$ is a homeomorphism of $\Sigma_g$ inducing an automorphism $\phi\in\aut(\pi_1(\Sigma_g))$ with
$$\fix\phi=\pi_1(\Sigma')\cong F_{t} ~~(2\leq t\leq  2g-2).$$
\end{lem}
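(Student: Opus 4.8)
The plan is to deduce both statements from the cited assertion (that every Nielsen path of a standard Nielsen--Thurston representative can be deformed rel endpoints into its fixed point set) together with the Nielsen--Thurston description of the fixed point set of a pseudo-Anosov map. The common mechanism I would use is this: if $x$ is a fixed point of $f$ and $w\in\fix\phi$ for $\phi=f_*$, then representing $w$ by a loop $\gamma$ based at $x$, the relation $\phi(w)=w$ says exactly that $f(\gamma)$ is homotopic to $\gamma$ rel endpoints, i.e. $\gamma$ is a Nielsen loop. By the assertion, $\gamma$ can then be deformed rel endpoints into $\fix f$, hence into the connected component of $\fix f$ containing $x$. So the whole computation reduces to identifying that component and its fundamental group.

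For part (1), I would invoke that a pseudo-Anosov homeomorphism has only finitely many, isolated fixed points in the interior. When $\Sigma$ is closed, $\fix f$ is therefore a finite set, so the component containing $x$ is the single point $\{x\}$ and every Nielsen loop deforms to the constant loop; thus $\fix\phi=1$. When $\partial\Sigma\neq\emptyset$ and $f$ fixes a boundary circle $c$ pointwise with $x\in c$, the set $\fix f$ is the disjoint union of $c$ with finitely many isolated interior points, so the component containing $x$ is exactly $c$. Hence any $w\in\fix\phi$ is a power of $[c]$, while conversely $[c]\in\fix\phi$ because $f$ fixes $c$ pointwise; since $\Sigma$ has negative Euler characteristic its boundary is essential and $\pi_1(\Sigma)$ is torsion-free, so $\fix\phi=\langle[c]\rangle\cong\Z$.

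For part (2), I would first check that $f=f'\cup f''$ is a well-defined homeomorphism: $f'=\id$ and $f''$ agree on the common boundary $\partial\Sigma'=\partial\Sigma''$ precisely because $f''$ fixes it pointwise. Choosing the base point $x$ in $\Sigma'$, the set $\fix f$ is the union of the pointwise-fixed closed subsurface $\Sigma'$ with the finitely many isolated interior fixed points of $f''$ in $\mathrm{int}\,\Sigma''$; as these interior points are disjoint from $\Sigma'$, the component of $\fix f$ containing $x$ is exactly $\Sigma'$. The deformation argument then gives $\fix\phi\subseteq\pi_1(\Sigma')$, while the reverse inclusion $\pi_1(\Sigma')\subseteq\fix\phi$ is immediate from $f|_{\Sigma'}=\id$; hence $\fix\phi=\pi_1(\Sigma')$. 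Since $\Sigma'$ is a compact surface with nonempty boundary it is free, $\pi_1(\Sigma')\cong F_t$ with $t=1-\chi(\Sigma')$. Finally, additivity of Euler characteristic along the circles $\partial\Sigma'$ gives $\chi(\Sigma')+\chi(\Sigma'')=\chi(\Sigma_g)=2-2g$, and feeding the constraints $\chi(\Sigma')\le-1$ and $\chi(\Sigma'')\le-1$ into $t=1-\chi(\Sigma')$ yields $2\le t\le 2g-2$.

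The genuinely hard content is not produced here but imported: the assertion that Nielsen paths of a standard representative deform into $\fix f$, and the precise structure of the fixed point set of a pseudo-Anosov map, are the deep inputs from \cite{JWZ11} and Thurston theory. The hard part in assembling the proof will therefore be the base-point and Nielsen-class bookkeeping---making sure the fixed point class of $x$ really corresponds to $\fix\phi$ and that the deformation keeps each loop inside the correct component---rather than any fresh computation; the only fully self-contained step is the Euler-characteristic count that pins down the range of $t$.
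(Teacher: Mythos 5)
Your proposal is correct and takes essentially the same route as the paper: the paper obtains this lemma precisely by quoting the assertion that Nielsen paths of a standard Nielsen--Thurston representative deform rel endpoints into $\fix f$, combined with the finiteness and isolation of pseudo-Anosov fixed points, which is exactly your mechanism. Your write-up simply makes explicit the bookkeeping (identifying the component of $\fix f$ containing the base point, the $\pi_1$-injectivity of $\Sigma'$, and the Euler characteristic count) that the paper delegates to the citation of \cite{JWZ11}.
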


By using Lemma \ref{JWZ mafandeyinli}, we can show a result on aut-fixed subgroups of surface groups.

\begin{prop}\label{fixed subgp in surface gp}
A subgroup $H\leq \pi_1(\Sigma_g)$ ($g\geq 2$) is aut-fixed up to isomorphism if and only if $H\cong \pi_1(\Sigma_g)$ or $H\cong F_t$ ($0\leq t< 2g$). More precisely, for a canonical presentation
$$\pi_1(\Sigma_g)=\langle a_1,b_1,\cdots ,a_g,b_g\mid [a_1,b_1]\cdots [a_g,b_g]=1 \rangle,$$
and for any $1\leq k<g$, there exists $\phi_k, \psi_k\in \aut(\pi_1(\Sigma_g))$ such that
$$\fix\phi_k=\langle a_1, b_1, \ldots, a_k, b_k\rangle\cong F_{2k};$$
$$\fix\psi_k=\langle a_1, b_1, \ldots, a_{k}, b_{k}, a_{k+1}\rangle\cong F_{2k+1}.$$
\end{prop}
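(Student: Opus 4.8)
The plan is to prove the two implications separately: for ``only if'' I combine the subgroup classification with the rank bound, and for ``if'' I realize each admissible isomorphism type by an explicit automorphism, using Lemma \ref{JWZ mafandeyinli} as the engine.

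For the ``only if'' direction I would start from Lemma \ref{subgp of surface}: every subgroup of $\pi_1(\Sigma_g)$ is either free (of finite or $\aleph_0$ rank, when the index is infinite) or isomorphic to $\pi_1(\Sigma_{m(g-1)+1})$ (when the index is $m<\infty$). The Jiang--Wang--Zhang bound $\rk(\fix\phi)\leq 2g$ from \cite{JWZ11} immediately excludes $F_{\aleph_0}$, and in the finite-index case forces $2(m(g-1)+1)\leq 2g$, i.e. $m(g-1)\leq g-1$, so $m=1$; thus the only finite-index possibility is $\pi_1(\Sigma_g)$ itself, realized by the identity. It then remains to show that a genuinely free fixed subgroup has rank at most $2g-1$, i.e. to exclude $F_{2g}$. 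Realizing $\phi$ by a homeomorphism $f$ in Nielsen--Thurston form, $\fix\phi$ is the fundamental group of a fixed subsurface $\hat\Sigma\subsetneq\Sigma_g$ (the identity pieces together with the cut surfaces created by any twisting), so if $\fix\phi\cong F_t$ then $\chi(\hat\Sigma)=1-t$ and the complement is nonempty with $\chi=\chi(\Sigma_g)-\chi(\hat\Sigma)=t+1-2g$. A proper subsurface-with-boundary complement has $\chi\leq 1$, with $\chi=1$ only for a disc; but a disc complement forces $f$ to be isotopic to the identity, whence $\fix\phi=\pi_1(\Sigma_g)$ is not free. Hence a free fixed subgroup has $\chi(\text{complement})\leq 0$, giving $t\leq 2g-1$. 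This Euler-characteristic bookkeeping in the presence of twist regions is the step I expect to be the main obstacle, since the naive identification $\fix\phi=\pi_1(\text{identity part})$ fails once Dehn twists enter.

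For the ``if'' direction I would exhibit each type directly. The trivial group $F_0$ is $\fix f$ for a pseudo-Anosov $f$ on the closed surface (Lemma \ref{JWZ mafandeyinli}(1)), and $\Z=F_1=\langle a_1\rangle$ arises from a map that is pseudo-Anosov on $\Sigma_g$ cut along the nonseparating curve $a_1$ while fixing both boundary copies of $a_1$ pointwise, so that only $a_1$ survives upon regluing (again Lemma \ref{JWZ mafandeyinli}(1)). For $1\leq k<g$ the even case $\phi_k$ uses Lemma \ref{JWZ mafandeyinli}(2) with $\Sigma'$ the genus-$k$ subsurface bounded by $[a_1,b_1]\cdots[a_k,b_k]$, so that $\pi_1(\Sigma')=\langle a_1,b_1,\dots,a_k,b_k\rangle\cong F_{2k}$ and the complement $\Sigma''$, of genus $g-k$ with $\chi=1-2(g-k)<0$, carries a boundary-fixing pseudo-Anosov; then $\fix\phi_k=\pi_1(\Sigma')$ exactly.

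The odd case is the delicate one. For $1\leq k\leq g-2$ I would again apply Lemma \ref{JWZ mafandeyinli}(2), now taking $\Sigma'$ of genus $k$ with two boundary components, so $\pi_1(\Sigma')=\langle a_1,b_1,\dots,a_k,b_k,a_{k+1}\rangle\cong F_{2k+1}$, the complement (genus $g-k-1$, two boundaries, $\chi=2k+2-2g<0$) supporting a pseudo-Anosov fixing both boundaries pointwise. For the remaining value $k=g-1$ this complement degenerates to an annulus, so Lemma \ref{JWZ mafandeyinli}(2) no longer applies; instead I take $\psi_{g-1}$ to be the single Dehn twist $T_{a_g}$ about $a_g$. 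Since $T_{a_g}$ is the identity on $\Sigma_g$ cut along $a_g$ and moves only $b_g$ (via $b_g\mapsto b_g a_g$), its fixed subgroup is $\pi_1(\Sigma_g\!\mid\! a_g)=\langle a_1,b_1,\dots,a_{g-1},b_{g-1},a_g\rangle$, which is free of rank $2g-1$ because the surface relation involves $b_g$ and hence imposes no relation here; a short computation using $a_g b_g a_g^{-1}=\big(\prod_{i<g}[a_i,b_i]\big)^{-1}b_g$ confirms that no element outside this subgroup is fixed. Together with the identity realizing $\pi_1(\Sigma_g)$, these constructions cover every isomorphism type in the statement, completing the ``if'' direction.
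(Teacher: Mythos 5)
Your ``if'' direction is essentially the paper's own proof: the identity for $\pi_1(\Sigma_g)$, Lemma \ref{JWZ mafandeyinli}(1) for the trivial group, Lemma \ref{JWZ mafandeyinli}(2) with a one-holed genus-$k$ subsurface for $F_{2k}$ and a two-holed genus-$k$ subsurface for $F_{2k+1}$ ($1\leq k\leq g-2$), and the automorphism $b_g\mapsto b_ga_g$ fixing the other generators (your Dehn twist $T_{a_g}$) for the degenerate case $F_{2g-1}$, where the paper verifies $\fix\psi=\langle a_1,b_1,\ldots,a_{g-1},b_{g-1},a_g\rangle$ by citing \cite[Example 4.7]{LZ23} -- exactly your ``short computation''. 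The only difference is cosmetic: for $\Z$ you cut along a nonseparating curve $a_1$ and use a single pseudo-Anosov piece, whereas the paper splits along a separating curve and glues two pseudo-Anosov pieces; both rest on the same implicit gluing/Nielsen-path reasoning, so this variant is fine.

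The genuine gap is in your ``only if'' direction, at precisely the step you flag. The rank bound $\rk(\fix\phi)\leq 2g$ from \cite{JWZ11} does eliminate $F_{\aleph_0}$ and every proper finite-index (surface) subgroup, as you say, but your exclusion of $F_{2g}$ hinges on the claim that, after putting a realizing homeomorphism $f$ in Nielsen--Thurston form, $\fix\phi$ is the fundamental group of a compact fixed subsurface whose complement is again a subsurface. That identification is not available for free: an automorphism of $\pi_1(\Sigma_g)$ is realized by a homeomorphism only up to inner automorphism, so $\fix\phi$ is a priori the fixed subgroup attached to one particular (possibly empty) fixed point class of $f$, and converting fixed-point-set information into ``$\fix\phi=\pi_1$ of a subsurface'' is exactly the Nielsen-path deformation assertion that constitutes the hard content of \cite{JWZ11}. (The same basepoint issue infects your final step: if the complement is a disc then $\phi$ is inner, so $\fix\phi$ is either all of $\pi_1(\Sigma_g)$ or a cyclic centralizer, not necessarily $\pi_1(\Sigma_g)$ as you wrote -- though either way it is not $F_{2g}$.) As written, your Euler-characteristic argument re-derives part of the theorem it needs to quote, and is incomplete at its key step. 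The paper avoids this entirely by citing \cite[Theorem 1.2]{JWZ11}, whose sharp form (not merely the rank bound) says that $\fix\phi$ is either all of $\pi_1(\Sigma_g)$ or free of rank $<2g$; combined with Lemma \ref{subgp of surface} that is the whole ``only if'' direction. The repair for your write-up is simply to invoke that sharper statement rather than attempt the Nielsen--Thurston bookkeeping yourself.
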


\begin{proof}
The ``only if " part is clear by \cite[Theorem 1.2]{JWZ11} and Lemma \ref{subgp of surface}. Below, we prove the ``if " part.

The cases of $H=\pi_1(\Sigma_g)$ and $H=1$ are trivial, because $\fix(\id)=\pi_1(\Sigma_g)$ and $\fix\phi=1$ for $\phi\in\aut(\pi_1(\Sigma_g))$ induced by a pseudo-Anosov homeomorphism of the closed surface $\Sigma_g$.

For the case of $H\cong \Z$, first, pick a simple closed curve $c\subset \Sigma_g$ which separates $\Sigma_g$ into two components $\Sigma'$ and $\Sigma''$ with boundaries $\partial\Sigma'=\partial\Sigma''=c$. Then, take two pseudo-Anosov homeomorphisms $f'$ and $f''$ of $\Sigma'$ and $\Sigma''$ respectively, such that
$$f'|_c=f''|_c=\id: c\to c.$$
By Lemma \ref{JWZ mafandeyinli}, the automorphisms induced by $f'$ and $f''$ both have fixed subgroups $\langle c \rangle\cong \Z$, and hence we have a homeomorphism $f=f'\cup f''$ of $\Sigma_g$ inducing an automorphism $\phi\in\aut(\pi_1(\Sigma_g))$ with $\fix\phi=\langle c \rangle\cong \Z$.

For  $H\cong F_{2k} ~(1\leq k<g)$, note that
$$\langle a_1, b_1, \ldots, a_k, b_k\rangle=\pi_1(\Sigma'_k)\cong F_{2k}$$
where $\Sigma'_k$ is a compact subsurface of the closed surface $\Sigma_g$ and homeomorphic to a closed surface $\Sigma_k$ with an open disk removed. Then pick $f'$ the identity of $\Sigma'_k$ and $f''$ a pseudo-Anosov homeomorphism of $\Sigma''=\overline{\Sigma_g-\Sigma'_k}$ fixing the boundary $\partial\Sigma''=\partial\Sigma'_k$. Then $f=f'\cup f''$ is a homeomorphism of $\Sigma_g$ inducing an automorphism $\phi_k\in\aut(\pi_1(\Sigma_g))$ with $\fix\phi_k=\pi_1(\Sigma'_k)\cong F_{2k}.$

For $H\cong F_{2k+1} ~(1\leq k<g-1)$, let
$$\langle a_1, b_1, \ldots, a_k, b_k, a_{k+1}\rangle=\pi_1(\Sigma'_k)\cong F_{2k+1}$$
where $\Sigma'_k$ is a compact subsurface of the closed surface $\Sigma_g$ and homeomorphic to a closed surface $\Sigma_k$ with two open disks removed. Then pick $f'$ the identity of $\Sigma'_k$ and $f''$ a pseudo-Anosov homeomorphism of $\Sigma''=\overline{\Sigma_g-\Sigma'_k}$ fixing the boundary $\partial\Sigma''=\partial\Sigma'=S^1\sqcup  S^1$, the disjoint union of two circles. Then $f=f'\cup f''$ is a homeomorphism of $\Sigma_g$ inducing an automorphism $\psi_k\in\aut(\pi_1(\Sigma_g))$ with $\fix\psi_k=\pi_1(\Sigma'_k)\cong F_{2k+1}.$

For the rest case of $H= \langle a_1,b_1,\cdots, a_{g-1}, b_{g-1}, a_g\rangle \cong F_{2g-1}$, we take $\psi\in\aut(\pi_1(\Sigma_g))$ as $\psi(b_g)=b_ga_g$ and $\psi$ fixes the other generators. Then by straightforward computations, $\fix\psi=H$. (See \cite[Example 4.7]{LZ23}).
\end{proof}

\subsection{Automorphisms}

Now we give a standard form for every automorphism of $G=\pi_1(\Sigma_g)\times \Z^k$ or $F_g\times \Z^k$.

\begin{lem}\label{standerd form}
Let $\phi$ be an automorphism of $H\times \Z^k$ ($k\geq 1$) where $H$ is a non-elementary torsion-free hyperbolic group. Then for any $(u, v)\in H\times \Z^k$,
$$\phi(u, v)=(\a(u), \g(u)+\mathcal{L}(v)),$$
where $\a: H\xrightarrow{\cong} H$ and $\mathcal{L}: \Z^k\xrightarrow{\cong}\Z^k$ are automorphisms, and $\g: H\to \Z^k$ is a homomorphism.
\end{lem}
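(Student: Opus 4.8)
The plan is to analyze the automorphism $\phi$ of $G = H\times\Z^k$ by exploiting the algebraic distinction between the two factors: $H$ is non-elementary torsion-free hyperbolic, so its center is trivial, whereas $\Z^k$ is precisely the center of $G$. The key structural observation is that any automorphism must preserve the center, so $\phi(\Z^k)=\Z^k$ and $\phi$ restricts to an automorphism $\mathcal{L}:\Z^k\to\Z^k$. This immediately identifies the $\Z^k$-to-$\Z^k$ part of the formula and forces it to be invertible; concretely, writing $Z(G)=1\times\Z^k$, the restriction $\phi|_{1\times\Z^k}$ lands in $1\times\Z^k$ and is an automorphism there, which I would record as $\mathcal{L}$.

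First I would verify that $Z(G)=1\times\Z^k$. Since $H$ is non-elementary hyperbolic it has trivial center, so any element $(u,v)$ commuting with all of $G$ must have $u\in Z(H)=1$; conversely every $(1,v)$ is central. Hence $Z(G)=1\times\Z^k$ is a characteristic subgroup and $\phi$ preserves it. Next I would pass to the quotient: $G/Z(G)\cong H$, and $\phi$ descends to an automorphism $\bar\phi$ of $H$, which I would call $\a$. Writing $\phi(u,v)=(\a_1(u,v),\,\g_1(u,v))$ in coordinates, the fact that $\bar\phi=\a$ acts on the $H$-coordinate only through $u$ gives that the first coordinate $\a_1(u,v)$ depends only on $u$ modulo the center—but the first coordinate already lies in $H$, so $\a_1(u,v)=\a(u)$ is independent of $v$, and $\a\in\aut(H)$.

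It then remains to understand the second coordinate $\g_1(u,v)\in\Z^k$. Because $\phi$ is a homomorphism and the $\Z^k$ factor is central and abelian, I would show $\g_1$ is additive in a suitable sense: restricting to $1\times\Z^k$ recovers $\mathcal{L}(v)$, and restricting to $H\times 0$ defines a map $\g(u):=\g_1(u,0)$. Using multiplicativity $\phi\big((u,0)(1,v)\big)=\phi(u,0)\phi(1,v)$ together with centrality of $\Z^k$ yields $\g_1(u,v)=\g(u)+\mathcal{L}(v)$, and the relation $\phi\big((u_1,0)(u_2,0)\big)=\phi(u_1,0)\phi(u_2,0)$ shows $\g(u_1u_2)=\g(u_1)+\mathcal{L}\big(\g(u_2)\cdot 0\big)+\cdots$; more precisely it gives $\g(u_1u_2)=\g(u_1)+\g(u_2)$, so $\g:H\to\Z^k$ is a homomorphism. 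I expect the main obstacle to be the bookkeeping in this last step—ensuring that the cross-terms forced by $\phi$ being a genuine homomorphism (not just set-theoretic) collapse correctly because $\Z^k$ is central and $\mathcal{L}$ is linear—rather than any deep input; the genuinely essential fact is simply the triviality of $Z(H)$, which pins down the block-triangular shape.
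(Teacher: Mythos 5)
Your proposal is correct, and all the essential points are in place: $Z(H\times\Z^k)=1\times\Z^k$ because a non-elementary torsion-free hyperbolic group has trivial center (any nontrivial central element would force $H$ to lie in a cyclic centralizer, contradicting non-elementarity); the center is characteristic, so $\phi$ restricts to an automorphism $\mathcal{L}$ of $1\times\Z^k$ and descends to an automorphism $\a$ of $G/Z(G)\cong H$; and the factorization $\phi(u,v)=\phi(u,0)\,\phi(1,v)=(\a(u),\g(u))\cdot(1,\mathcal{L}(v))$ together with $\phi(u_1u_2,0)=\phi(u_1,0)\phi(u_2,0)$ gives both the block-triangular form and the fact that $\g$ is a homomorphism (your garbled intermediate expression with the spurious $\mathcal{L}(\cdot)$ cross-term indeed collapses to $\g(u_1u_2)=\g(u_1)+\g(u_2)$, exactly because the second coordinates simply add). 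However, your route differs from the paper's: the paper does not argue directly at all, but only observes that $H$ has trivial center and $\Z^k$ is nilpotent and then cites Lemma 2.4 of Neofytidis \cite{N20}, which establishes this triangular form for automorphisms of a product of a centerless group with a nilpotent group. What the citation buys is brevity and extra generality (a nilpotent, not necessarily abelian, second factor); what your argument buys is a short self-contained proof that makes transparent that the only inputs are the triviality of $Z(H)$, the centrality of the $\Z^k$ factor, and the fact that automorphisms preserve the center — in the abelian case the bookkeeping you were worried about is essentially vacuous.
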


\begin{proof}
Note that $H$ has trivial center and $\Z^k$ is nilpotent, then our conclusion follows from \cite[Lemma 2.4]{N20}.
\end{proof}

\begin{lem}\label{iso type}
Let $g_i\geq 2$ and $k_i\geq 1$ for $i=1,2$. Then
\begin{enumerate}
 \item $\pi_1(\Sigma_{g_1})\times \Z^{k_1}\not\cong F_{g_2}\times \Z^{k_2}$;
  \item $\pi_1(\Sigma_{g_1})\times \Z^{k_1}\cong \pi_1(\Sigma_{g_2})\times \Z^{k_2}$ if and only if $g_1=g_2$ and $k_1=k_2$;
  \item $F_{g_1}\times \Z^{k_1}\cong F_{g_2}\times \Z^{k_2}$ if and only if $g_1=g_2$ and $k_1=k_2$;
  \item $\rk(\pi_1(\Sigma_{g})\times \Z^{k})=2g+k$ and $\rk(F_{g}\times \Z^{k})=g+k$ for any $g,k\geq 0$.
\end{enumerate}
\end{lem}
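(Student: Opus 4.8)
The plan is to prove the four statements of Lemma~\ref{iso type} by combining abelianization (homology) invariants with the structural rigidity of the non-abelian factors, namely surface groups versus free groups. The guiding principle is that a direct-product decomposition $P\times\Z^k$ of a group $G$ is governed by its center together with its first homology, and that $\pi_1(\Sigma_g)$ and $F_g$ are distinguished by whether they are themselves centerless Poincar\'e-duality groups of dimension $2$.

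First I would compute the two natural invariants. For the center, since $\pi_1(\Sigma_g)$ and $F_g$ (for $g\geq 2$) are non-elementary torsion-free hyperbolic groups they have trivial center, so $Z(\pi_1(\Sigma_g)\times\Z^{k})=Z(F_g\times\Z^{k})=\Z^{k}$; this already recovers $k$ as $\rk$ of the center, giving the $k_1=k_2$ half of parts (2) and (3). For the first homology, abelianizing gives $H_1(\pi_1(\Sigma_g)\times\Z^{k})\cong\Z^{2g}\oplus\Z^{k}=\Z^{2g+k}$ and $H_1(F_g\times\Z^{k})\cong\Z^{g}\oplus\Z^{k}=\Z^{g+k}$, which pins down $2g+k$ and $g+k$ respectively as the first Betti number; combined with the value of $k$ from the center this forces $g_1=g_2$, completing parts (2) and (3) in both directions (the converse directions being trivial). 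This simultaneously handles part (4): $\rk$ is bounded below by the first Betti number and realized by the obvious generating sets, so $\rk(\pi_1(\Sigma_g)\times\Z^k)=2g+k$ and $\rk(F_g\times\Z^k)=g+k$; I would note that part (4) is also meant to cover the degenerate ranges $g,k\geq 0$, where for $g=0$ or $g=1$ one checks the formula directly against $\rk(\Z^k)=k$, $\rk(\Z^{2}\times\Z^k)$, etc.

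For part (1), the non-isomorphism $\pi_1(\Sigma_{g_1})\times\Z^{k_1}\not\cong F_{g_2}\times\Z^{k_2}$, the Betti-number count alone is not enough since one could have $2g_1+k_1=g_2+k_2$ and $k_1=k_2$, forcing $2g_1=g_2$, which is arithmetically possible. So I would distinguish the two groups by a finer invariant that detects the surface factor. The cleanest route is cohomological dimension together with Poincar\'e duality: $\pi_1(\Sigma_{g_1})\times\Z^{k_1}$ is a Poincar\'e-duality group of dimension $2+k_1$ (being the fundamental group of the closed aspherical manifold $\Sigma_{g_1}\times T^{k_1}$), whereas $F_{g_2}\times\Z^{k_2}$ has cohomological dimension $1+k_2$ and, more to the point, is not a $PD$-group because $F_{g_2}$ is not (its top cohomology with group-ring coefficients does not behave like that of a closed manifold). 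If the two groups were isomorphic, the center would force $k_1=k_2$, and then the quotients by the center, $\pi_1(\Sigma_{g_1})$ and $F_{g_2}$, would have to be isomorphic---but a closed surface group of genus $\geq 2$ is never free, since it has cohomological dimension $2$ while free groups have cohomological dimension $1$ (equivalently, $H_2(\pi_1(\Sigma_{g_1});\Z)=\Z\neq 0$ while $H_2(F_{g_2};\Z)=0$). This contradiction establishes part (1).

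The main obstacle I anticipate is making the ``pass to the quotient by the center'' step fully rigorous: an abstract isomorphism $\Phi\colon A\times\Z^{k}\to B\times\Z^{k}$ carries $Z(A\times\Z^k)=\Z^k$ isomorphically onto $Z(B\times\Z^k)=\Z^k$ (here using that $A,B$ are centerless), hence descends to an isomorphism of the quotients $A\cong (A\times\Z^k)/Z\cong (B\times\Z^k)/Z\cong B$. This is precisely the mechanism behind Lemma~\ref{standerd form}, so the quotient argument is legitimate and in fact already implicit in the paper's standard form for automorphisms. Once that reduction is in hand, everything reduces to the two clean facts that $\pi_1(\Sigma_g)\not\cong F_h$ for $g\geq 2$ (by $H_2$ or cohomological dimension) and that $\pi_1(\Sigma_{g_1})\cong\pi_1(\Sigma_{g_2})$ iff $g_1=g_2$ (by the first Betti number $2g$), both of which are standard. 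I would therefore organize the write-up as: reduce to the centerless factors via the center, then separate surface-type from free-type by $H_2$, and finally separate within each family by the first Betti number, with part (4) dispatched by the rank–Betti-number sandwich.
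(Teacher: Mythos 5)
Your proposal is correct and follows essentially the same route as the paper: use triviality of the center of the hyperbolic factor to identify $Z(H\times\Z^k)=\Z^k$ (forcing $k_1=k_2$), pass to the quotient by the center to get an isomorphism of the non-abelian factors, and use abelianization to pin down genus, rank, and item (4). The only difference is that you spell out the standard fact that $\pi_1(\Sigma_g)\not\cong F_h$ (via $H_2$ or cohomological dimension), which the paper leaves implicit.
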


\begin{proof}
Let $H_i=\pi_1(\Sigma_{g_i})$ or $F_{g_i}$ ($g_i\geq 2$) for $i=1,2$. Note that all the centers $\mathcal C(H_i)$ are trivial and
$$\mathcal C(H_i\times \Z^{k_i})=\mathcal C(H_i)\times \mathcal C(\Z^{k_i})=\Z^{k_i}.$$
Therefore, $k_1=k_2$ if $H_1\times \Z^{k_1}\cong H_2\times \Z^{k_2}$. Moreover, if $\phi: H_1\times \Z^{k_1}\to H_2\times \Z^{k_2}$ is an isomorphism, then
$$\phi(\Z^{k_1})=\phi(\mathcal C(H_1\times \Z^{k_1}))=\mathcal C(H_2\times \Z^{k_2})=\Z^{k_2},$$
and hence
$$H_1=\frac{H_1\times \Z^{k_1}}{\mathcal C(H_1\times \Z^{k_1})}\cong \frac{H_2\times \Z^{k_2}}{\mathcal C(H_2\times \Z^{k_2})}=H_2.$$
So items (1)-(3) hold. Item (4) follows from the fact that the abelianization $\pi_1(\Sigma_{g})^\mathrm{ab}\cong \Z^{2g}$ and $(F_{g})^\mathrm{ab}\cong \Z^{g}$.
\end{proof}

\section{Three useful lemmas}\label{sect 2'}

In order to facilitate our proof of the main theorems later, we introduce three useful lemmas in this section.

Throughout this section, let $H=F_g$ or $H=\pi_1(\Sigma_g)$ for $g\geq 2 $ and $\phi$ be an automorphism of $G=H\times \Z^k$ ($k\geq 1$). Note that $H$ is a non-elementary torsion-free hyperbolic group. Then by Lemma \ref{standerd form}, the automorphism $\phi\in\aut(G)$ has the form:
$$\phi(u, v)=(\a(u), \g(u)+\mathcal{L}(v)), \quad\forall (u, v)\in H\times \Z^k,$$
where $\a: H\xrightarrow{\cong}  H$ and $\mathcal{L}: \Z^k\xrightarrow{\cong}\Z^k$ are automorphisms, and
$\g:  H\to \Z^k$ is a homomorphism.

Now we consider the projection $$p:  H\times \Z^k\to  H, \quad p(u, v)=u,$$
and let $p_\phi: \fix\phi\to p(\fix\phi)\leq H$ be the restriction of $p$ on $\fix\phi$.
Then we have a natural short exact sequence
$$1\to \ker(p_\phi)\hookrightarrow\fix\phi \xrightarrow{p_\phi} p(\fix\phi)\to 1,$$
where
$$
p(\fix\phi)=\{u\in \fix\a \mid \exists v=\g(u)+\mathcal{L}(v)\in \Z^k \}\vartriangleleft \fix\a,
$$
is a normal subgroup of  $\fix\a$, and
\begin{eqnarray}\label{equ 6}
\ker(p_\phi)&=&\{(1, v)\in H\times\Z^k\mid v=\mathcal{L}(v)\}\notag\\
&=&\{v\in\Z^k\mid v=\mathcal{L}(v)\}\cong \Z^s,\notag
\end{eqnarray}
for some $s\leq k$. Now we have two cases:

\textbf{Case ($H=F_g$).} Since the subgroup $p(\fix\phi)$ of $F_g$ is free, we can define a monomorphism
$$\iota:p(\fix\phi)\to \fix\phi$$
sending each element of a chosen free basis for $p(\fix\phi)$ back to an arbitrary preimage, i.e, $p\comp \iota$ is the identity of $p(\fix\phi)$. It implies  that
the above exact sequence is split. Note that $\ker(p_\phi)\cong \Z^s$ is abelian, by straightforward calculations, we can see the following
map is an isomorphism:
\begin{eqnarray}
\Psi: \fix\phi &\to& \iota(p(\fix\phi))\times \ker(p_\phi)\notag\\
 h&\mapsto& \big((\iota\comp p)(h), ~~h\cdot(\iota\comp p)(h^{-1})\big)\notag.
\end{eqnarray}
Therefore,
$$\fix\phi=\iota(p(\fix\phi))\times \ker(p_\phi)\cong p(\fix\phi)\times\Z^s (s\leq k).$$

\textbf{Case ($H=\pi_1(\Sigma_g)$).} In this case, by the proof of Lemma \ref{surface subgp}, we also have
\begin{eqnarray}
\fix\phi &=&\{(u,v)\in  \pi_1(\Sigma_g)\times \Z^k\mid u\in\fix\a, ~v=\g(u)+\mathcal{L}(v)\}\nonumber\\
&\cong& p(\fix\phi)\times \ker(p_\phi)\nonumber\\
&\cong& p(\fix\phi)\times\Z^s (s\leq k).\nonumber
\end{eqnarray}

In conclusion, we have proven the following:
\begin{lem}\label{adding lem 1}
$\fix\phi\cong p(\fix\phi)\times \ker(p_\phi)\cong p(\fix\phi)\times\Z^s$ for some $s\leq k.$
\end{lem}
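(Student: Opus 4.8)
The plan is to read off the structure of $\fix\phi$ directly from the short exact sequence
$$1\to \ker(p_\phi)\hookrightarrow\fix\phi \xrightarrow{p_\phi} p(\fix\phi)\to 1$$
set up above, so that the whole argument reduces to three tasks: pin down the kernel, split the sequence, and then upgrade a split extension to an honest direct product. First I would identify $\ker(p_\phi)$. An element of the kernel has the shape $(1,v)$ with $\phi(1,v)=(1,v)$, and the standard form $\phi(u,v)=(\a(u),\g(u)+\mathcal L(v))$ forces $v=\mathcal L(v)$; hence $\ker(p_\phi)=\fix\mathcal L\leq \Z^k$ is a subgroup of $\Z^k$ and is therefore isomorphic to $\Z^s$ for some $s\leq k$. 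This already isolates the promised $\Z^s$ factor.

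Next I would construct a section $\iota\colon p(\fix\phi)\to\fix\phi$ with $p\comp\iota=\id_{p(\fix\phi)}$, which is where the two cases diverge. When $H=F_g$ the subgroup $p(\fix\phi)\leq H$ is free, so a section exists for free by projectivity: send a chosen basis to arbitrary preimages. When $H=\pi_1(\Sigma_g)$ and $p(\fix\phi)$ is not free, it is a closed surface group with a canonical presentation, and here I would reuse the key point from the proof of Lemma \ref{surface subgp}: lifting each generator $a_i,b_i$ to an arbitrary preimage is automatically compatible with the surface relation, because the $\Z^k$-components of the lifted relator assemble into the abelian word $[s_1,t_1]\cdots[s_n,t_n]=0$ and so vanish. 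In either case the sequence splits.

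Finally I would convert the split extension into a direct product. The decisive structural fact is that $\ker(p_\phi)\leq\Z^k$ lies in the center of $G=H\times\Z^k$, so conjugation by any element of $\fix\phi$ fixes it pointwise; the extension is central and the action of $p(\fix\phi)$ on $\ker(p_\phi)$ is trivial. Concretely, I would verify that
$$\Psi\colon \fix\phi\to \iota(p(\fix\phi))\times\ker(p_\phi),\qquad h\mapsto\big((\iota\comp p)(h),\ h\cdot(\iota\comp p)(h^{-1})\big)$$
is well defined (the second coordinate projects to $p(h)p(h^{-1})=1$, so it lands in $\ker(p_\phi)$), is a homomorphism precisely because that central second coordinate commutes past everything, and is bijective with inverse $(\iota(w),z)\mapsto\iota(w)z$. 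This yields $\fix\phi\cong p(\fix\phi)\times\ker(p_\phi)\cong p(\fix\phi)\times\Z^s$ with $s\leq k$. The hard part will be the section in the surface case: unlike free groups, surface groups are not projective, so the splitting is not formal, and one must genuinely exploit that the lifting obstruction for the relator lives in the abelian group $\Z^k$, where it is forced to be zero.
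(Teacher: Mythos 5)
Your proposal is correct and follows essentially the same route as the paper: identify $\ker(p_\phi)=\fix\mathcal{L}\cong\Z^s$, split the sequence by a section (freeness of $p(\fix\phi)$ in the free case, and lifting the canonical generators with the surface relator dying in the abelian group $\Z^k$ in the surface case, exactly as in the proof of Lemma \ref{surface subgp}), and then convert the split extension to a direct product via the map $\Psi$. Your explicit appeal to centrality of $\ker(p_\phi)\leq 1\times\Z^k$ in $G$ is in fact slightly more careful than the paper's ``straightforward calculations,'' since abelianness of the kernel alone would not suffice to make $\Psi$ a homomorphism.
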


Moreover, we have:

\begin{lem}\label{adding lem 2}
If $\mathcal{L}: \Z^k\xrightarrow{\cong}\Z^k$ has no eigenvalue $1$, then $p(\fix\phi)$ is finitely generated and hence $p(\fix\phi)\not\cong F_{\aleph_0}$.
\end{lem}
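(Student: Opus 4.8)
The plan is to give an explicit description of $p(\fix\phi)$ as a finite-index subgroup of $\fix\a$, and then to conclude via the finite generation of $\fix\a$. First I would unwind the description of $p(\fix\phi)$ recorded just before the lemma: an element $u\in H$ lies in $p(\fix\phi)$ exactly when $u\in\fix\a$ and the equation $v=\g(u)+\mathcal{L}(v)$ admits a solution $v\in\Z^k$, that is, precisely when $\g(u)$ lies in the image of $\id-\mathcal{L}$. Hence
$$p(\fix\phi)=\{u\in\fix\a\mid \g(u)\in\im(\id-\mathcal{L})\}.$$

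Next I would exploit the hypothesis. Since $\mathcal{L}$ has no eigenvalue $1$, the integral endomorphism $\id-\mathcal{L}$ of $\Z^k$ has nonzero determinant; therefore it is injective and its image $\im(\id-\mathcal{L})$ is a subgroup of $\Z^k$ of finite index $\abs{\det(\id-\mathcal{L})}$. In particular the abelian quotient $Q:=\Z^k/\im(\id-\mathcal{L})$ is finite. I would then recognise $p(\fix\phi)$ as the kernel of the composite homomorphism $\fix\a\xrightarrow{\g}\Z^k\twoheadrightarrow Q$, so that $\fix\a/p(\fix\phi)$ embeds into the finite group $Q$. This shows that $p(\fix\phi)$ is a (normal) subgroup of $\fix\a$ of finite index at most $\abs{Q}$.

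Finally I would invoke the finite generation of $\fix\a$. As $\a$ is an automorphism of $H=F_g$ or $\pi_1(\Sigma_g)$, the Bestvina--Handel theorem in the free case (cf. Proposition \ref{fixed subgp in free gp}), respectively the Jiang--Wang--Zhang bound $\rk(\fix\a)\leq 2g$ in the surface case (cf. Proposition \ref{fixed subgp in surface gp}), guarantees that $\fix\a$ is finitely generated. Since a finite-index subgroup of a finitely generated group is again finitely generated, $p(\fix\phi)$ is finitely generated, and in particular $p(\fix\phi)\not\cong F_{\aleph_0}$.

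The only genuine content lies in two small steps, and neither is a serious obstacle: translating the spectral hypothesis on $\mathcal{L}$ into the finiteness of the index $\abs{\det(\id-\mathcal{L})}$ of $\im(\id-\mathcal{L})$ in $\Z^k$, and recalling that $\fix\a$ is finitely generated, which is where the deep results on fixed subgroups of free and surface group automorphisms enter. All remaining manipulations are routine.
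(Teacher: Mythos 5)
Your proof is correct, and it follows the same overall strategy as the paper: prove that $p(\fix\phi)$ has finite index in $\fix\a$, then invoke the deep results (Bestvina--Handel in the free case, Jiang--Wang--Zhang / Proposition \ref{fixed subgp in surface gp} in the surface case) guaranteeing that $\fix\a$ is finitely generated, so that its finite-index subgroup $p(\fix\phi)$ is finitely generated and hence not isomorphic to $F_{\aleph_0}$. Where you genuinely differ is the mechanism for the finite-index step. The paper works with the unique rational solution $X_u=(\id-\mathcal{L})^{-1}\g(u)$ of $v=\g(u)+\mathcal{L}(v)$, observes that $d(\id-\mathcal{L})^{-1}$ is an integer matrix for $d=\det(\id-\mathcal{L})\neq 0$, and deduces that every $d$-th power and every commutator of elements of $\fix\a$ lies in $p(\fix\phi)$; hence the verbal subgroup $\Gamma_d=\langle\{u^d,[u,v]\mid u,v\in\fix\a\}\rangle$ is contained in $p(\fix\phi)$, giving $[\fix\a:p(\fix\phi)]\leq[\fix\a:\Gamma_d]\leq|d|^r$ with $r=\rk(\fix\a)$. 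You instead rewrite the solvability condition as $\g(u)\in\im(\id-\mathcal{L})$ and identify $p(\fix\phi)$ as the kernel of the composite $\fix\a\xrightarrow{\g}\Z^k\twoheadrightarrow\Z^k/\im(\id-\mathcal{L})$, whose target is finite of order $|d|$ precisely because $\mathcal{L}$ has no eigenvalue $1$. The underlying arithmetic fact is the same (essentially $d\,\Z^k\subseteq\im(\id-\mathcal{L})$), but your packaging is cleaner: it yields the sharper bound $[\fix\a:p(\fix\phi)]\leq|d|$ rather than $|d|^r$, makes the normality of $p(\fix\phi)$ in $\fix\a$ immediate, and avoids the power-and-commutator trick entirely; what the paper's version offers in exchange is only the explicit exhibition of concrete elements of $p(\fix\phi)$, and neither difference affects the conclusion.
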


\begin{proof}
Recall that
$$
p(\fix\phi)=\{u\in \fix\a \mid \exists v=\g(u)+\mathcal{L}(v)\in \Z^k\}.
$$
Since $\mathcal L$ has no eigenvalue $1$, we have
$$d:=\det(\id-\mathcal{L})\neq 0,$$
where $\mathcal L$ is viewed as a matrix. Let us consider the linear equation,
$$X=\g(u)+\mathcal{L}X.$$
It always has a unique real root: $X_u=(\id-\mathcal{L})^{-1}\g(u)$ for $u\in \fix\a$, but we need an integer root. Notice however
$d(\id-\mathcal{L})^{-1}$ is an integer matrix, and hence,
\begin{eqnarray}\label{eq 50}
dX_u&=&d(\id-\mathcal{L})^{-1}\g(u)\notag\\
&=&(\id-\mathcal{L})^{-1}d\g(u)\notag\\
&=&(\id-\mathcal{L})^{-1}\g(u^d)\in \Z^k.\notag
\end{eqnarray}
Therefore, we have
$$\Gamma_d:=\langle \{u^d,[u,v]\mid u,v\in\fix\a\}\rangle \subseteq p(\fix\phi),$$
and the index
$$[\fix\a : p(\fix\phi)]\leq[\fix\a : \Gamma_d]\leq |d|^r,~~r=\rk(\fix\alpha).$$

If $H=F_g$, then $\a\in\aut(F_g)$ and $\fix\a=F_r$ for some $r\leq g$ by Bestvina-Handel's deep result, and hence $p(\fix\phi)$ is a free group of finite rank. So
$p(\fix\phi)\not\cong F_{\aleph_0}$ in this case.

If $H=\Sigma_g$, then $\a\in\aut(\pi_1(\Sigma_g))$ and $\fix\a=\pi_1(\Sigma_g)$ or $\fix\a=F_r (r<2g)$ by Proposition \ref{fixed subgp in surface gp}, and hence
we also have $p(\fix\phi)\not\cong F_{\aleph_0}$ by Lemma \ref{F_2 subgp} and Lemma \ref{subgp of surface}.
\end{proof}

Recall that $\a\in \aut(H)$, $\mathcal{L}\in \aut(\Z^k)$ and $\g: H\to \Z^k$ is a homomorphism.

\begin{lem}\label{adding lem 3}
If $\mathcal{L}: \Z^k\xrightarrow{\cong}\Z^k$ is the identity and $\g(\fix\a)$ is a nontrivial subgroup of $\Z^k$, then $$p(\fix\phi)\cong F_{\aleph_0} ~\mathrm{or} ~1.$$
\end{lem}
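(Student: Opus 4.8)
The plan is to use the hypothesis $\mathcal{L}=\id$ to turn $p(\fix\phi)$ into an honest kernel, and then to read off its isomorphism type from the classification of infinite-index normal subgroups of free and surface groups provided by Lemma \ref{F_2 subgp} and Lemma \ref{subgp of surface}. Recall that
$$p(\fix\phi)=\{u\in\fix\a\mid \exists\, v\in\Z^k,\ v=\g(u)+\mathcal{L}(v)\}.$$
First I would substitute $\mathcal{L}=\id$ into the defining relation $v=\g(u)+\mathcal{L}(v)$; it collapses to $\g(u)=0$ and no longer constrains $v$. Hence
$$p(\fix\phi)=\{u\in\fix\a\mid \g(u)=0\}=\ker\!\big(\g|_{\fix\a}\big),$$
the kernel of the restriction of the homomorphism $\g$ to $\fix\a$, which is in particular a normal subgroup of $\fix\a$ (consistent with the exact sequence already set up above).

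Next I would pin down the quotient. By the first isomorphism theorem,
$$\fix\a/p(\fix\phi)\cong \g(\fix\a),$$
and $\g(\fix\a)$ is a subgroup of $\Z^k$, hence free-abelian, say $\g(\fix\a)\cong\Z^s$ for some $0\leq s\leq k$. Since $\g(\fix\a)$ is nontrivial by hypothesis, we have $s\geq 1$, so the quotient $\Z^s$ is infinite. Therefore $p(\fix\phi)$ is a normal subgroup of $\fix\a$ of infinite index.

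Finally I would feed this into the classification of $\fix\a$. If $H=F_g$ then $\fix\a\cong F_r$ ($r\leq g$) by Bestvina--Handel, while if $H=\pi_1(\Sigma_g)$ then $\fix\a\cong\pi_1(\Sigma_g)$ or $\fix\a\cong F_r$ ($r<2g$) by Proposition \ref{fixed subgp in surface gp}; in every case $\fix\a$ is free of finite rank or a closed surface group. If $p(\fix\phi)=1$ we are done. Otherwise $p(\fix\phi)$ is a \emph{nontrivial} normal subgroup of infinite index, and the ``moreover'' clauses of Lemma \ref{F_2 subgp} (when $\fix\a\cong F_r$) and Lemma \ref{subgp of surface} (when $\fix\a\cong\pi_1(\Sigma_g)$) force $p(\fix\phi)\cong F_{\aleph_0}$.

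The one place the argument is not fully automatic—and hence the main obstacle—is the low-rank behavior of $\fix\a$, which must be dealt with before invoking Schreier's ``moreover'' clause (valid for $F_r$ with $r\geq 2$). If $\fix\a=1$ then $\g(\fix\a)$ is trivial, contradicting the hypothesis; and if $\fix\a\cong\Z$ then a homomorphism $\Z\to\Z^k$ with nontrivial image is injective, forcing $p(\fix\phi)=1$. Thus whenever $p(\fix\phi)$ is nontrivial either $\rk(\fix\a)\geq 2$ or $\fix\a$ is a surface group, so the structure theorems apply cleanly and yield $p(\fix\phi)\cong F_{\aleph_0}$ or $1$ in all cases.
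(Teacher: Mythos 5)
Your proposal is correct and follows essentially the same route as the paper: use $\mathcal{L}=\id$ to identify $p(\fix\phi)$ with $\ker(\g|_{\fix\a})$, deduce that it is a normal subgroup of infinite index in $\fix\a$ (the paper phrases this via $[\fix\a:p(\fix\phi)]=|\g(\fix\a)|=\aleph_0$, you via the first isomorphism theorem and $\g(\fix\a)\cong\Z^s$ with $s\geq 1$, which is the same observation), and then invoke the ``moreover'' clauses of Lemma \ref{F_2 subgp} and Lemma \ref{subgp of surface}. Your explicit treatment of the degenerate cases $\fix\a=1$ and $\fix\a\cong\Z$, where Schreier's clause does not literally apply, is a small but genuine improvement in rigor over the paper's proof, which leaves those cases implicit in its citation of Propositions \ref{fixed subgp in free gp} and \ref{fixed subgp in surface gp}.
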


\begin{proof}
Since $\mathcal{L}$ is the identity, we have
\begin{eqnarray}\label{equ 9''}
p(\fix\phi)&=&\{u\in \fix\a \mid \exists v=\g(u)+\mathcal{L}(v)\in \Z^k\}\\
&=&\{u\in  \fix\a \mid \g(u)=0\}\vartriangleleft \fix\a.\notag
\end{eqnarray}
Moreover, note that $\g(\fix\a)$ is a nontrivial subgroup of $\Z^k$, then it is infinite, in fact, the order $|\g(\fix\a)|=\aleph_0$. Now consider the epimorphism
$$\g: \fix\a \twoheadrightarrow \g(\fix\a)\leq \Z^k.$$
Then Eq. (\ref{equ 9''}) implies the index
$$
[\fix\a : p(\fix\phi)]=|\g(\fix\a)|=\aleph_0.
$$
Namely, $p(\fix\phi)$ is an infinite-index normal subgroup of $\fix\a$. Whenever $H=F_g$ or $H=\pi_1(\Sigma_g)$, we always have
$$p(\fix\phi)\cong F_{\aleph_0} ~ \mathrm{or} ~ 1,$$
by combining Lemma \ref{F_2 subgp}, Lemma \ref{subgp of surface}, Proposition \ref{fixed subgp in free gp} and Proposition \ref{fixed subgp in surface gp}.
\end{proof}

\section{Fixed subgroups in $F_g\times \Z^k$}\label{sect 3}

In this section, we will study the fixed subgroup of a general automorphism of $G=F_g\times \Z^k$ and prove Theorem \ref{main thm3}.

\subsection{Non-fixed subgroups in $F_g\times \Z^k$}

\begin{prop}\label{fixed can not big than Z^k}
Let $\phi$ be an automorphism of $G=F_g\times \Z^k$ ($g\geq 2, k\geq 1$). Then
$$\fix\phi\not\cong F_{\aleph_0}, ~F_t\times \Z^k ~(t>g).$$
Namely, $F_{\aleph_0}$ and $F_t\times \Z^k$ ($t>g$) are not aut-fixed up to isomorphism in $G=F_g\times \Z^k$ ($g\geq 2, k\geq 1$).
\end{prop}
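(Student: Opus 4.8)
The plan is to prove both non-realizability claims by contradiction, combining the decomposition $\fix\phi\cong p(\fix\phi)\times\Z^s$ from Lemma \ref{adding lem 1} with the two dichotomy Lemmas \ref{adding lem 2} and \ref{adding lem 3}. The organizing observation is that $s$ is exactly the rank of the fixed lattice $\{v\in\Z^k\mid\mathcal L(v)=v\}=\ker(p_\phi)$ of $\mathcal L$, so that matching $s$ against the center of the conjectured isomorphism type pins down the spectral behaviour of $\mathcal L$ and drops us into the hypotheses of one of those lemmas.

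First I would dispose of $\fix\phi\not\cong F_{\aleph_0}$. Assuming $\fix\phi\cong F_{\aleph_0}$, the factor $\Z^s$ is central in $p(\fix\phi)\times\Z^s$ whereas $F_{\aleph_0}$ is centerless, forcing $s=0$; thus $\fix\phi\cong p(\fix\phi)\cong F_{\aleph_0}$ and the fixed lattice of $\mathcal L$ is trivial. Since $\mathcal L-\id$ is an integer matrix, a trivial integer kernel forces a trivial rational kernel, so $\mathcal L$ has no eigenvalue $1$. Lemma \ref{adding lem 2} then makes $p(\fix\phi)$ finitely generated, contradicting $p(\fix\phi)\cong F_{\aleph_0}$.

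Next, for $\fix\phi\not\cong F_t\times\Z^k$ with $t>g$, suppose the isomorphism holds. As $p(\fix\phi)$ is free and the target is finitely generated, $p(\fix\phi)\cong F_r$ with $r$ finite; non-abelianity of $F_t\times\Z^k$ (note $t\geq 3$) rules out $r\leq 1$, so $r\geq 2$ and Lemma \ref{iso type}(3) yields $r=t>g$ together with $s=k$. Full rank $s=k$ means $\ker(\mathcal L-\id)$ has rank $k$ over $\Z$, so $\mathcal L-\id$ vanishes rationally and $\mathcal L=\id$. Then by Lemma \ref{adding lem 3} the group $p(\fix\phi)=\{u\in\fix\a\mid\g(u)=0\}$ equals all of $\fix\a$ when $\g|_{\fix\a}$ is trivial, and is $\cong F_{\aleph_0}$ or $1$ when $\g(\fix\a)$ is nontrivial; since $\fix\a$ is free of rank at most $g$ by Proposition \ref{fixed subgp in free gp}, none of these options is a free group of finite rank $t>g$, the desired contradiction.

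The step I expect to be the crux — and the reason Lemmas \ref{adding lem 2} and \ref{adding lem 3} are stated exactly as they are — is controlling $p(\fix\phi)$ as a subgroup of $\fix\a$. The naive inclusion $p(\fix\phi)\leq\fix\a$, a free group of rank at most $g$, does not bound its rank, because a finite-index subgroup of a free group can have arbitrarily large rank by Schreier's formula (Lemma \ref{F_2 subgp}). What rescues the argument is that the center/rank bookkeeping never lands in the intermediate regime $0<s<k$ for these two forbidden types: it forces either $s=0$, where Lemma \ref{adding lem 2} supplies finite generation, or $s=k$, where $\mathcal L=\id$ makes $p(\fix\phi)=\ker(\g|_{\fix\a})$ of infinite index in (or equal to) $\fix\a$ rather than a rank-increasing proper finite-index subgroup, so Lemma \ref{adding lem 3} applies.
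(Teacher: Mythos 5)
Your proposal is correct and takes essentially the same route as the paper's proof: the identical decomposition $\fix\phi\cong p(\fix\phi)\times\Z^s$ from Lemma \ref{adding lem 1}, the same dichotomy in which $s=0$ gives $\mathcal L$ no eigenvalue $1$ (Lemma \ref{adding lem 2}) while $s=k$ forces $\mathcal L=\id$ (Lemma \ref{adding lem 3}), and the same endgame via Bestvina--Handel (Proposition \ref{fixed subgp in free gp}) and Lemma \ref{iso type}. The only difference is organizational: you argue by contradiction separately for each forbidden type, using center/rank bookkeeping to pin down $s$, whereas the paper splits into the cases $s<k$ and $s=k$ and excludes both types in each case; this is a cosmetic rearrangement of the same argument.
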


\begin{proof}
By Lemma \ref{standerd form}, an automorphism $\phi\in\aut(G)$ has the form:
$$\phi(u, v)=(\a(u), \g(u)+\mathcal{L}(v)), \quad\forall (u, v)\in F_g\times \Z^k,$$
where $\a: F_g\xrightarrow{\cong}  F_g$ and $\mathcal{L}: \Z^k\xrightarrow{\cong}\Z^k$ are automorphisms, and
$\g:  F_g\to \Z^k$ is a homomorphism.
Then by Lemma \ref{adding lem 1}, we have
\begin{eqnarray}\label{equ 4}%\label{eq 5}
\fix\phi &=&\{(u,v)\in  F_g\times \Z^k\mid u=\a(u), ~v=\g(u)+\mathcal{L}(v)\}\notag\\
&\cong& p(\fix\phi))\times \ker(p_\phi)\notag\\
&\cong& p(\fix\phi)\times\Z^s (s\leq k).\notag
\end{eqnarray}

Now we have two cases:

Case ($s<k$). In this case, by Lemma \ref{DV subgp} and Lemma \ref{iso type}, we have
$$\fix\phi\not\cong F_t\times \Z^k (t>g).$$
In particular, if $s=0$, then $\ker(p_\phi)=1$ and hence $\mathcal L$ has no eigenvalue $1$. Therefore, by Lemma \ref {adding lem 2}, we have
$\fix\phi\cong p(\fix\phi)\not\cong F_{\aleph_0}$.\\

Case ($s=k$). Note that $s=k$ holds if and only if the linear map $\mathcal{L}: \Z^k\xrightarrow{\cong}\Z^k$ is the identity.  Therefore,
\begin{eqnarray}\label{equ 8}\notag
\fix\phi &=&\{(u,v)\in  F_g\times \Z^k\mid u\in\fix\a, ~\g(u)=0\}\\
&\cong& p(\fix\phi)\times \Z^k,
\end{eqnarray}
where
$$
p(\fix\phi)=\{u\in  \fix\a \mid \g(u)=0\}\vartriangleleft \fix\a.
$$
Recall that $\g: F_g\to \Z^k$ is a homomorphism, and the image $\g(\fix\a)$ is a subgroup of $\Z^k$.

If $\g(\fix\a)$ is trivial, i.e., $\g(u)=0$ for all $u\in\fix\a$, then $p(\fix\phi)=\fix\a$. Combining Eq. (\ref{equ 8}), Proposition \ref{fixed subgp in free gp} and Lemma \ref{iso type}, we get
$$
\fix\phi=\fix\a\times \Z^k\not\cong F_t\times \Z^k (t>g), ~F_{\aleph_0}.
$$

If $\g(\fix\a)$ is nontrivial, then by Lemma \ref{adding lem 3},
$$p(\fix\phi)\cong F_{\aleph_0} \quad \mathrm{or} \quad 1,$$
and hence
$$
\fix\phi\cong p(\fix\phi)\times \Z^k\not\cong F_t\times \Z^k (t>g), ~F_{\aleph_0},
$$
where the ``$\cong$" follows from Eq. (\ref{equ 8}) while the ``$\not\cong$" follows from Lemma \ref{iso type} (recall $\aleph_0\neq t\in\N$).
\end{proof}

\subsection{Fixed subgroups in $F_g\times \Z^2$}

First, we study the aut-fixed subgroups in $F_g\times \Z^2$, which will play a key role in the proof of Theorem \ref{main thm3}.

\begin{prop}\label{key lem}
For any integer $t\geq 0$, the groups $F_t$ and $F_t\times \Z$ are aut-fixed up to isomorphism in $F_g\times \Z^2$ ($g\geq 2$).
\end{prop}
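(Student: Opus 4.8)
The plan is to exploit the standard form from Lemma~\ref{standerd form}: every $\phi\in\aut(G)$ is $\phi(u,v)=(\a(u),\g(u)+\mathcal{L}(v))$ with $\a\in\aut(F_g)$, $\mathcal{L}\in\aut(\Z^2)$ and $\g\colon F_g\to\Z^2$ a homomorphism, while by Lemma~\ref{adding lem 1} one has $\fix\phi\cong p(\fix\phi)\times\Z^s$ with $s=\rk\{v\in\Z^2:\mathcal{L}v=v\}$ and
$$p(\fix\phi)=\{u\in\fix\a:\g(u)\in\im(\id-\mathcal{L})\}=\ker\big(\fix\a\to\Z^2/\im(\id-\mathcal{L})\big).$$
Thus it suffices to construct a triple $(\a,\mathcal{L},\g)$ realizing $p(\fix\phi)\cong F_t$ together with $s=0$ for the $F_t$ case, and $s=1$ for the $F_t\times\Z$ case. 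I would first dispose of $t\in\{0,1\}$ directly: take $\g=0$ and $\a=\a_t$ with $\fix\a_t\cong F_t$ from Proposition~\ref{fixed subgp in free gp}, choosing $\mathcal{L}=-\id$ (so $s=0$) resp. $\mathcal{L}=\mathrm{diag}(1,-1)$ (so $s=1$); then $\fix\phi\cong\fix\a_t\cong F_t$ resp. $F_t\times\Z$. The substantive work is for $t\geq 2$.

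For $F_t$ with $t\geq 2$, I would fix $\a=\a_2$, the automorphism fixing $a_1,a_2$ and inverting $a_3,\dots,a_g$, so that $\fix\a=\langle a_1,a_2\rangle\cong F_2$ (this is where $g\geq 2$ is used). Take $\mathcal{L}=\left(\begin{smallmatrix}0&-1\\1&c\end{smallmatrix}\right)$ with $c=3-t$; then $\det(\id-\mathcal{L})=2-c=t-1\neq 0$, so $\mathcal{L}$ has no eigenvalue $1$ and $s=0$, while $\Z^2/\im(\id-\mathcal{L})\cong\Z/(t-1)$ is cyclic, generated by the image of $e_1$. Setting $\g(a_1)=e_1$ and $\g=0$ on the other generators makes the composite $\fix\a\to\Z/(t-1)$ surjective, so $p(\fix\phi)$ is an index-$(t-1)$ subgroup of $F_2$; by Schreier's formula (Lemma~\ref{F_2 subgp}), $p(\fix\phi)\cong F_{(t-1)(2-1)+1}=F_t$, whence $\fix\phi\cong F_t$.

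For $F_t\times\Z$ with $t\geq 2$, I would keep $\a=\a_2$ but replace $\mathcal{L}$ by the shear $\left(\begin{smallmatrix}1&t-1\\0&1\end{smallmatrix}\right)$, which has eigenvalue $1$ with fixed lattice $\Z e_1$, so $s=1$. Here $\im(\id-\mathcal{L})=(t-1)\Z\, e_1$, giving $\Z^2/\im(\id-\mathcal{L})\cong\Z\oplus\Z/(t-1)$ with torsion generated by the image of $e_1$. Choosing again $\g(a_1)=e_1$ and $\g=0$ elsewhere routes $\g(\fix\a)$ into the rank-one subgroup $\Z e_1$ and onto the torsion summand $\Z/(t-1)$, so $p(\fix\phi)=\ker(\fix\a\to\Z/(t-1))$ again has index $t-1$ in $F_2$ and, by Schreier, $p(\fix\phi)\cong F_t$; hence $\fix\phi\cong F_t\times\Z$.

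The essential difficulty, and the reason the statement is nontrivial, is that $t$ may exceed $g$, whereas Bestvina--Handel forces $\rk(\fix\a)\leq g$; so $\fix\phi$ can never sit inside $\fix\a$ as an isomorphic copy. The mechanism circumventing this is that $p(\fix\phi)$ is a finite-index subgroup of $\fix\a$ whose rank grows linearly with the index by Schreier's formula, and starting from $\fix\a\cong F_2$ the index $t-1$ produces exactly rank $t$, covering every $t\geq 2$. The delicate part will be arranging this index while simultaneously pinning down $s$: I must read off the torsion of the cokernel $\Z^2/\im(\id-\mathcal{L})$ and send $\g(\fix\a)$ into precisely that torsion (so $p(\fix\phi)$ stays finite index), using an $\mathcal{L}$ without eigenvalue $1$ for the $F_t$ case and a shear for the $F_t\times\Z$ case. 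Verifying that $e_1$ indeed generates the relevant cyclic cokernel is the one computation to carry out carefully.
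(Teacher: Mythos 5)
Your proof is correct and takes essentially the same route as the paper's: both fix $\a$ with $\fix\a=\langle a_1,a_2\rangle\cong F_2$, take $\g$ to be the $a_1$-exponent-sum homomorphism into the first coordinate, choose $\mathcal{L}$ (without eigenvalue $1$ for $F_t$, a shear for $F_t\times\Z$) so that membership in $p(\fix\phi)$ becomes the congruence $\nu(u,a_1)\equiv 0 \pmod{t-1}$, and apply Schreier's formula to the resulting index-$(t-1)$ subgroup of $F_2$; indeed your shear $\left(\begin{smallmatrix}1&t-1\\0&1\end{smallmatrix}\right)$ is the paper's matrix verbatim. The only differences are cosmetic: you organize the verification via the cokernel of $\id-\mathcal{L}$ and Lemma \ref{adding lem 1} rather than solving the linear system explicitly, and in the $F_t$ case you use a different unimodular matrix with $\det(\id-\mathcal{L})=t-1$ in place of the paper's $\left(\begin{smallmatrix}t&t-1\\1&1\end{smallmatrix}\right)$, which works equally well.
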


\begin{proof} Let $G=F_g\times \Z^2$ ($g\geq 2$), and let $\{a_1, a_2\ldots, a_g\}$ be a free basis of $F_g$. Every element in $\Z^2$ is denoted by a column vector.

Note that the cases of $t=0, 1$ are trivial by Lemma \ref{product auo-fixed sbgp} and Proposition \ref{fixed subgp in free gp}.
Now we suppose $t\geq 2$ and discuss in two cases.\\

(1). Construction of an automorphism $\phi_t\in\aut(G)$ such that $\fix\phi_t\cong F_t$ ($t\geq 2$).

Define an automorphism $\a\in\aut(F_g)$:
$$\a: F_g\to F_g, \quad \a(a_1)=a_1, ~\a(a_2)=a_2, ~\a(a_j)= a_j^{-1}, ~j\geq 3.$$
Then
$$
\fix\a=\langle a_1, a_2 \rangle\cong F_2.
$$
Let
$$\g: F_g\to \Z^2,\quad \g(a_1)=\begin{pmatrix}
1\\
0\\
\end{pmatrix}
, \quad \g(a_i)=\begin{pmatrix}
0\\
0\\
\end{pmatrix}, ~ i\geq 2,$$
and for any $t\geq 2$, let
$$\mathcal{L}: \Z^2\to\Z^2, \quad \mathcal{L}\begin{pmatrix}
s_1\\
s_2\\
\end{pmatrix}=\begin{pmatrix}
t&t-1\\
1&1\\
\end{pmatrix}\begin{pmatrix}
s_1\\
s_2\\
\end{pmatrix}. $$
Note that $\mathcal{L}$ is an automorphism of $\Z^2$.
Let
$$\phi_t: F_g\times\Z^2 \to F_g\times \Z^2 ,$$
$$\phi_t(u,v)=(\a(u), ~\g(u)+\mathcal{L}(v)).$$
Then $\phi_t$ is an automorphism with the inverse $\phi^{-1}_t(u, v)=\big(\a^{-1}(u), ~\mathcal{L}^{-1}(v-\g(\a^{-1}(u))\big)$.\\

Let us consider the fixed subgroup of $\phi_t$.

Note that for any $u\in F_g$, $\g(u)=\begin{pmatrix}
\nu(u,a_1)\\
0\\
\end{pmatrix}\in \Z^2$ where $\nu(u,a_1)$ is the sum of all the powers of $a_1$ in $u$. Then
\begin{eqnarray}\label{equ 1}
\fix\phi_t
&=&\left\{(u, \begin{pmatrix}
s_1\\
s_2\\
\end{pmatrix})~\Big|~u=\a(u),~\begin{pmatrix}
s_1\\
s_2\\
\end{pmatrix}=\begin{pmatrix}
\nu(u,a_1)\\
0\\
\end{pmatrix}+\begin{pmatrix}
t&t-1\\
1&1\\
\end{pmatrix}\begin{pmatrix}
s_1\\
s_2\\
\end{pmatrix}\right\}\notag\\
&=&
\left\{(u, \begin{pmatrix}
s_1\\
s_2\\
\end{pmatrix})~\Big|~u=\a(u), s_1=0, ~s_2=-\frac{\nu(u,a_1)}{t-1}\right\}\notag\notag\\
&=&\{(u, s_2)\in F_g\times \Z\mid u\in\fix\a, ~s_2=-\frac{\nu(u,a_1)}{t-1}\}.\label{eq 11}
\end{eqnarray}

Now we consider the projection $$p: F_g\times \Z\to F_g, \quad p(u, v)=u.$$
Then
\begin{equation}\label{equ 2''}
p(\fix\phi_t)=\{u\in \fix\a\mid \nu(u,a_1)\equiv 0 \mod t-1\}\cong F_{t},
\end{equation}
the above ``$\cong$" holds according to Lemma \ref{F_2 subgp}, because
$$\nu(\cdot, a_1): \fix\a=\langle a_1, a_2\rangle\twoheadrightarrow \Z$$
gives an epimorphism and the index
$$[\fix\a: p(\fix\phi_t)]=[\Z : (t-1)\Z]=t-1\geq 1.$$

Furthermore, let $p|_{\fix\phi_t}: \fix\phi_t\twoheadrightarrow p(\fix\phi_t)$ be the restriction of $p$ on $\fix\phi_t$. Since the kernel
$$\ker(p|_{\fix\phi_t})=\fix\phi_t\cap\ker(p)=(1,0)\in F_g\times \Z$$
is trivial, we have an isomorphism
$\fix\phi_t\cong p(\fix\phi_t).$
Then by Eq. (\ref{equ 2''}), we have
\begin{equation}\label{eq 12}
\fix\phi_t\cong F_{t}.
\end{equation}
the proof of Case (1) is finished.\\

(2). Construction of $\psi_t\in\aut(G)$ such that $\fix\psi_t\cong F_t\times \Z$ ($t\geq 2$).

By using the same notations as in the above case (1) but let
$$\mathcal{L}: \Z^2\to\Z^2, \quad \mathcal{L}\begin{pmatrix}
s_1\\
s_2\\
\end{pmatrix}=\begin{pmatrix}
1&t-1\\
0&1\\
\end{pmatrix}\begin{pmatrix}
s_1\\
s_2\\
\end{pmatrix},$$
Then, after the same arguments as in case (1), Eq. (\ref{eq 11}) becomes
\begin{eqnarray}
\fix\psi_t
&=&\left\{(u, \begin{pmatrix}
s_1\\
s_2\\
\end{pmatrix})\in F_g\times \Z^2~\Big|~u=\a(u),~\begin{pmatrix}
s_1\\
s_2\\
\end{pmatrix}=\begin{pmatrix}
\nu(u, a_1)\\
0\\
\end{pmatrix}+\begin{pmatrix}
1&t-1\\
0&1\\
\end{pmatrix}\begin{pmatrix}
s_1\\
s_2\\
\end{pmatrix}\right\}\notag\\
&=&
\left\{(u, \begin{pmatrix}
s_1\\
s_2\\
\end{pmatrix})\in F_g\times \Z^2~\Big|~u=\a(u), ~s_2=-\frac{\nu(u, a_1)}{t-1}\right\}\notag\\
&=&N\times \Z \notag\\
&\cong& F_{t}\times \Z,\notag
\end{eqnarray}
where $N$ is the same as ``$\fix\phi_t$" in Eq. (\ref{eq 11}):
$$
N=\fix\phi_t=\{(u, s_2)\in F_g\times \Z\mid u=\a(u), ~s_2=-\frac{\nu(u, a_1)}{t-1}\}\cong F_{t},$$
and the above ``$\cong$" follows from Eq. (\ref{eq 12}).
Note that $s_1=0$ in Eq. (\ref{eq 11}) but here $s_1$ is independent! So $s_1$ can generate a factor ``$\Z$".
\end{proof}

\subsection{Fixed subgroups in $F_g\times \Z$}

In contrast with $F_g\times \Z^k ~(k\geq 2)$, $F_g\times \Z$ contains only finitely many aut-fixed subgroups up to isomorphism. Moreover, we have a complete classification of aut-fixed subgroups in $F_g\times \Z$.

\begin{thm}\label{main thm2'}
A subgroup of $F_g\times \Z$ ($g\geq 2$) is aut-fixed up to isomorphism if and only if, it has one of the following forms:
$$F_{2t-1} ~(1\leq t\leq g), \quad F_t\times \Z^s ~(0\leq t\leq g, ~s=0,1), \quad or \quad F_{\aleph_0}\times \Z.$$
\end{thm}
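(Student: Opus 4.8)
The plan is to reduce to the standard form of Lemma \ref{standerd form} and then read off $\fix\phi$ from the arithmetic of the $\Z$-part. Since here $k=1$, any $\phi\in\aut(G)$ is $\phi(u,v)=(\a(u),\g(u)+\mathcal{L}(v))$ with $\a\in\aut(F_g)$, $\g:F_g\to\Z$ a homomorphism, and $\mathcal{L}\in\aut(\Z)$, so $\mathcal{L}=\pm\id$. By Lemma \ref{adding lem 1}, $\fix\phi\cong p(\fix\phi)\times\Z^s$ with $s\le 1$, and since $\ker(p_\phi)=\{v\in\Z:\mathcal{L}(v)=v\}$, the value of $s$ is forced by $\mathcal{L}$: we get $s=1$ exactly when $\mathcal{L}=\id$, and $s=0$ exactly when $\mathcal{L}=-\id$. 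Recall also that $\fix\a\cong F_r$ for some $r\le g$ by Bestvina--Handel (Proposition \ref{fixed subgp in free gp}).

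For the ``only if'' direction I would split on the two values of $\mathcal{L}$. If $\mathcal{L}=\id$ then $s=1$ and $p(\fix\phi)=\ker(\g|_{\fix\a})$; when $\g|_{\fix\a}$ is trivial this is $\fix\a\cong F_r$ and $\fix\phi\cong F_r\times\Z$ ($r\le g$), while when $\g|_{\fix\a}$ is nontrivial Lemma \ref{adding lem 3} gives $p(\fix\phi)\cong F_{\aleph_0}$ or $1$, i.e. $\fix\phi\cong F_{\aleph_0}\times\Z$ or $\Z$. If $\mathcal{L}=-\id$ then $s=0$ and $d:=\det(\id-\mathcal{L})=2$, so $p(\fix\phi)=(\g|_{\fix\a})^{-1}(2\Z)$ is the preimage of $2\Z$; since $2$ is prime the index $[\fix\a:p(\fix\phi)]$ equals $1$ (when $\g(\fix\a)\subseteq 2\Z$) or $2$ (when $\g(\fix\a)\not\subseteq 2\Z$). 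By Schreier's formula (Lemma \ref{F_2 subgp}) these two possibilities give $\fix\phi\cong F_r$ and $\fix\phi\cong F_{2r-1}$ respectively, with $r\le g$. Collecting the four outputs yields exactly the listed forms.

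For the ``if'' direction, the easy types come from multiplicativity: $F_t$ and $F_t\times\Z$ (for $0\le t\le g$) are handled by Lemma \ref{product auo-fixed sbgp} together with Proposition \ref{fixed subgp in free gp} and the identity or $-\id$ on $\Z$, while $F_{\aleph_0}\times\Z$ is realised by $\a=\id$, $\mathcal{L}=\id$ and a surjection $\g:F_g\twoheadrightarrow\Z$, whose kernel is normal of infinite index, hence $\cong F_{\aleph_0}$. The one genuinely new construction is $F_{2t-1}$: I would take $\a$ with $\fix\a=\langle a_1,\dots,a_t\rangle\cong F_t$ (as in Proposition \ref{fixed subgp in free gp}), $\mathcal{L}=-\id$, and $\g:F_g\to\Z$ with $\g(a_1)=1$; then $p(\fix\phi)$ is the index-$2$ subgroup of $F_t$ on which $\g$ takes even values, so $\fix\phi\cong p(\fix\phi)\cong F_{2t-1}$ by Schreier. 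One checks directly that such a $\phi$ is an automorphism, with inverse $\phi^{-1}(u,v)=(\a^{-1}(u),\mathcal{L}^{-1}(v-\g(\a^{-1}(u))))$.

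The main obstacle is the $F_{2t-1}$ family: it is the only part of the classification invisible to the naive ``product'' intuition, and it is exactly what distinguishes $F_g\times\Z$ from $F_g$. The key point to get right is that combining $\mathcal{L}=-\id$ with a parity-detecting $\g$ forces an index-$2$ (not an arbitrary index) subgroup of $\fix\a$, because $\det(\id-\mathcal{L})=2$ is prime; this is what pins the rank at $2r-1$ via Schreier and simultaneously rules out any further free ranks beyond those listed.
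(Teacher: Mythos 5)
Your proposal is correct and follows essentially the same route as the paper: the standard form of Lemma \ref{standerd form}, the splitting $\fix\phi\cong p(\fix\phi)\times\Z^s$ from Lemma \ref{adding lem 1}, Lemma \ref{adding lem 3} when $\mathcal{L}=\id$, Schreier's formula for the index-$\leq 2$ subgroup when $\mathcal{L}=-\id$, and multiplicativity plus explicit automorphisms for realizability. The differences are only organizational: you split first on $\mathcal{L}=\pm\id$ rather than on whether $\g$ vanishes (which lets you handle cleanly the subcase $\g(F_g)\neq 0$ but $\g(\fix\a)=0$ that the paper passes over silently), and you write out the $F_{2t-1}$ construction that the paper leaves implicit.
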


According to Lemma \ref{iso type}, by removing isomorphic subgroups in Theorem \ref{main thm2'}, we obtain a direct corollary as follows.

\begin{cor}\label{main cor2'}
$F_g\times \Z$ ($g\geq 2$) contains, up to isomorphism, only the following $2g+2+\lfloor\frac g2 \rfloor$ non-isomorphic fixed subgroups of automorphisms:
$$F_m ~(0\leq m\leq g ~\mathrm{or}~ m=2t-1 \leq 2g-1),\quad F_t\times \Z (1\leq t\leq g), \quad \mathrm{or} \quad F_{\aleph_0}\times \Z.$$
\end{cor}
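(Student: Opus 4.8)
The plan is to derive this purely as a counting exercise from Theorem \ref{main thm2'}, which already lists every aut-fixed subgroup up to isomorphism; the only work left is to collapse that list to distinct isomorphism classes and tally them. First I would separate the three families appearing in Theorem \ref{main thm2'}: the odd-rank free groups $F_{2t-1}$ ($1\leq t\leq g$), the groups $F_t\times\Z^s$ ($0\leq t\leq g$, $s\in\{0,1\}$), and the single infinite group $F_{\aleph_0}\times\Z$. Grouping by the number of central $\Z$ factors, I would organize the classes as (a) pure free groups (those with $s=0$, together with all the $F_{2t-1}$), (b) groups of the form (finitely generated free)$\,\times\,\Z$, and (c) $F_{\aleph_0}\times\Z$.

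For the pure free groups, the set of ranks that occur is $\{0,1,\ldots,g\}$ (from $s=0$) together with the odd values $\{1,3,\ldots,2g-1\}$ (from the $F_{2t-1}$). Since two free groups are isomorphic if and only if they have equal rank, the number of distinct classes here is the cardinality of this union. The ranks $0,\ldots,g$ give $g+1$ classes, and the odd ranks exceeding $g$ — that is, the odd integers in $(g,2g-1]$ — contribute exactly $\lfloor\frac g2\rfloor$ further classes; a short check on the parity of $g$ confirms this count. Thus the pure free groups account for $g+1+\lfloor\frac g2\rfloor$ classes.

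For family (b), the candidates are $F_t\times\Z$ with $0\leq t\leq g$. Here I would invoke Lemma \ref{iso type}(3),(4) to see that for $t,t'\geq 2$ the groups $F_t\times\Z$ and $F_{t'}\times\Z$ are isomorphic only when $t=t'$, and that none of them is free (they have nontrivial center and the wrong rank). The two degenerate values need separate attention, which I expect to be the only delicate point: $F_0\times\Z\cong\Z\cong F_1$ is in fact a pure free group and has already been counted in (a), so it must be discarded here, while $F_1\times\Z\cong\Z^2$ is abelian of rank $2$, hence neither free nor isomorphic to any $F_{t'}\times\Z$ with $t'\geq 2$, so it survives as a genuine class. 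After discarding $t=0$, the surviving values $1\leq t\leq g$ give $g$ pairwise non-isomorphic classes, none overlapping family (a). Finally $F_{\aleph_0}\times\Z$ in family (c) is infinitely generated, hence distinct from everything in (a) and (b), contributing $1$.

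Summing the three tallies gives $(g+1+\lfloor\frac g2\rfloor)+g+1=2g+2+\lfloor\frac g2\rfloor$, and reading off the surviving representatives yields the stated list $F_m$ ($0\leq m\leq g$ or $m=2t-1\leq 2g-1$), $F_t\times\Z$ ($1\leq t\leq g$), and $F_{\aleph_0}\times\Z$. The main obstacle is not any deep input — everything reduces to Theorem \ref{main thm2'} and Lemma \ref{iso type} — but rather keeping the bookkeeping honest at the small ranks, where Lemma \ref{iso type} (stated for rank $\geq 2$) does not directly apply and the degenerate identifications $F_0\times\Z\cong F_1$ and $F_1\times\Z\cong\Z^2$ must be handled by hand.
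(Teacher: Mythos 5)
Your proposal is correct and follows exactly the paper's route: the paper likewise obtains the corollary directly from Theorem \ref{main thm2'} by identifying isomorphic subgroups via Lemma \ref{iso type}, leaving the tally implicit. Your count ($g+1+\lfloor\frac g2\rfloor$ free classes, $g$ classes $F_t\times\Z$ after discarding $F_0\times\Z\cong F_1$, plus $F_{\aleph_0}\times\Z$) and your by-hand treatment of the small ranks where Lemma \ref{iso type} does not apply are precisely the bookkeeping the paper omits.
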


\begin{proof}[\textbf{Proof of Theorem \ref{main thm2'}}]
Let $G=F_g\times \Z$ ($g\geq 2$). By using the same notations as in Proposition \ref{key lem}, an automorphism $\phi\in\aut(G)$ has the form:
$$\phi(u, v)=(\a(u), \g(u)+\mathcal{L}(v)), \quad\forall (u, v)\in F_g\times \Z,$$
where $\a: F_g\xrightarrow{\cong}  F_g$ and $\mathcal{L}=\pm \id: \Z\xrightarrow{\cong}\Z$ are automorphisms, and
$\g:  F_g\to \Z$ is a homomorphism.
Then
$$
\fix\phi=\{(u,v)\in  F_g\times \Z\mid u=\a(u), ~v=\g(u)+\mathcal{L}(v)\}.
$$

Case (1). If $\g(F_g)=0$, then $\phi=\a\times \mathcal L$, and we have
$$\fix\phi=\fix\a\times \fix\mathcal L\cong F_t\times \Z^s ~~(t\leq g, s=0, 1).$$
Clearly, for any $H=F_t\times \Z^s ~~(t\leq g, s=0, 1)$, by Proposition \ref{fixed subgp in free gp},  there exists an automorphism $\phi\in\aut(G)$
such that $\fix\phi\cong H$.\\

Case (2). If $\g(F_g)\neq 0$, by considering the projection $$p:  F_g\times \Z\to  F_g, \quad p(u, v)=u,$$
and let $p_\phi: \fix\phi\to p(\fix\phi)\leq F_g$ be the restriction of $p$ on $\fix\phi$. By Lemma \ref{adding lem 1},
we have
\begin{equation}\label{eq 13}
\fix\phi\cong p(\fix\phi)\times \ker(p_\phi)
\end{equation}
where
%\begin{eqnarray}\label{equ 61}
$$p(\fix\phi)=\{u\in\fix\a \mid \exists v=\g(u)+\mathcal{L}(v)\in\Z\},$$
$$\ker(p_\phi)=\{v\in\Z\mid v=\mathcal{L}(v)\}\cong \Z^s, ~s=0,1.$$
%\end{eqnarray}
If $\mathcal L=\id$,  then by Lemma \ref{adding lem 3} and Eq. (\ref{eq 13}), we have
$$\fix\phi\cong p(\fix\phi)\times\Z\cong F_{\aleph_0}\times \Z \quad \mathrm{or} \quad \Z.$$
If $\mathcal L=-\id$, then $\ker(p_\phi)=\{0\}$ and  Eq. (\ref{eq 13}) becomes
\begin{eqnarray}
\fix\phi&\cong& p(\fix\phi)\notag\\
&=&\{u\in  \fix\a \mid \exists v=\g(u)-v\}\notag\\
&=&\{u\in  \fix\a \mid \g(u)\equiv 0 \mod 2\},\notag
\end{eqnarray}
which is a subgroup of $\fix\a$ with index $[\fix\a :  p(\fix\phi)]\leq 2$.  By Bestvina-Handel's result (Proposition \ref{fixed subgp in free gp}), $\fix\a\cong F_t$ for some $t\leq g$. Then, by Lemma \ref{F_2 subgp}, if the index is $2$, we have
$$\fix\phi\cong F_{2t-1}, ~ 1\leq t\leq g,$$
and $\fix\phi\cong \fix\a\cong F_t$ if the index is $1$.

To summarize, $\fix\phi\cong F_{2t-1} ~(1\leq t\leq g), F_t\times \Z^s ~(0\leq t\leq g, ~s=0,1)$, or $F_{\aleph_0}\times \Z$, and all of them are aut-fixed up to isomorphism.
\end{proof}

In the above proof of Theorem \ref{main thm2'}, for any  $m\geq 1,$ if we take $\phi_m: F_g\times \Z \to F_g\times \Z$ as
$$\phi_m(u, v)=(u, \g(u)+(m+1)v),$$
$$\g: F_g\to \Z,  \quad \g(a_1)=1, ~~\g(a_i)=0, ~i\geq 2.$$
Then
\begin{eqnarray}
\fix\phi_m&=&\{(u,v)\in  F_g\times \Z\mid  v=\nu(u, a_1)+(m+1)v\}\notag\\
&=&\{(u,v)\in  F_g\times \Z\mid  v=-\frac{\nu(u, a_1)}{m}\}\notag\\
&\cong &\{ u \in  F_g \mid  \nu(u, a_1)\equiv 0 \mod m \}\notag\\
&\cong & F_{m(g-1)+1},\notag
\end{eqnarray}
where $\nu(u, a_1)$ is the sum of powers of $a_1$ in $u$. The last ``$\cong$" holds because $\fix\phi_m$ is a subgroup of $F_g$ with index $m$. Note that $\phi_m$ is not an automorphism for $m\geq 1$. Therefore, we have proved the following  (contrasting with the case of automorphisms).

\begin{prop}\label{endo-fixed sbgp}
$F_g\times \Z$ ($g\geq 2$) contains, up to isomorphism, infinitely many fixed subgroups of endomorphisms.
\end{prop}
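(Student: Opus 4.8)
The plan is to exhibit an explicit one-parameter family of endomorphisms of $F_g\times\Z$ whose fixed subgroups realise infinitely many distinct ranks. The conceptual point is that the rigidity recorded in Theorem \ref{main thm2'} came entirely from the constraint $\mathcal{L}=\pm\id$ on the $\Z$-factor forced by Lemma \ref{standerd form} for automorphisms; relaxing $\mathcal{L}$ to an arbitrary (possibly non-surjective) endomorphism of $\Z$ removes this constraint and allows index-$m$ subgroups of $F_g$ to appear as fixed subgroups.

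Concretely, I would fix a free basis $\{a_1,\dots,a_g\}$ of $F_g$, let $\g\colon F_g\to\Z$ be the homomorphism $\g(u)=\nu(u,a_1)$ recording the exponent sum of $a_1$, and for each integer $m\geq 1$ define
$$\phi_m\colon F_g\times\Z\to F_g\times\Z,\qquad \phi_m(u,v)=\bigl(u,\ \g(u)+(m+1)v\bigr).$$
First I would check that $\phi_m$ is a genuine endomorphism: this is automatic, since the first coordinate is the identity, $\g$ is a homomorphism, and $\Z$ is abelian. Note that $\phi_m$ fails to be an automorphism for $m\geq 1$, because multiplication by $m+1$ is not surjective on $\Z$, which is precisely the freedom unavailable in the automorphism setting.

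Next I would compute $\fix\phi_m$. A pair $(u,v)$ is fixed if and only if $v=\g(u)+(m+1)v$, i.e. $mv=-\g(u)$, which admits a (necessarily unique) integer solution $v=-\g(u)/m$ exactly when $\g(u)\equiv 0 \pmod m$. Since $v$ is then determined by $u$, the projection $p\colon F_g\times\Z\to F_g$ restricts to an isomorphism
$$\fix\phi_m\ \xrightarrow{\ \cong\ }\ \{u\in F_g\mid \g(u)\equiv 0 \bmod m\}=\ker\bigl(F_g\xrightarrow{\g}\Z\twoheadrightarrow\Z/m\Z\bigr).$$
Because $\g$ is surjective (it sends $a_1\mapsto 1$), this kernel has index $m$ in $F_g$, so Schreier's formula (Lemma \ref{F_2 subgp}) gives $\fix\phi_m\cong F_{m(g-1)+1}$.

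Finally I would conclude by a counting argument: as $m$ ranges over $\{1,2,3,\dots\}$ the rank $m(g-1)+1$ takes infinitely many distinct values, using $g-1\geq 1$, so the groups $\fix\phi_m$ are pairwise non-isomorphic by Lemma \ref{iso type}. Hence $F_g\times\Z$ contains, up to isomorphism, infinitely many fixed subgroups of endomorphisms. No step is genuinely difficult; the only point requiring care is the verification that $p$ restricts to an isomorphism on $\fix\phi_m$, equivalently that $\ker(p)\cap\fix\phi_m$ is trivial, which follows from the uniqueness of the solution $v$. The real content is the contrast with the automorphism case, where the forced $\mathcal{L}=\pm\id$ caps the attainable finite index at $2$ and thereby limits fixed subgroups to finitely many isomorphism types.
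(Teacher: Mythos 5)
Your proposal is correct and is essentially identical to the paper's own proof: the paper also defines $\phi_m(u,v)=(u,\g(u)+(m+1)v)$ with $\g$ the exponent sum of $a_1$, identifies $\fix\phi_m$ via the projection with the index-$m$ kernel $\{u\in F_g\mid \nu(u,a_1)\equiv 0 \bmod m\}$, and applies Schreier's formula to get $\fix\phi_m\cong F_{m(g-1)+1}$ for infinitely many ranks. Your added remarks (uniqueness of $v$ giving injectivity of the projection, and the contrast with the forced $\mathcal{L}=\pm\id$ in the automorphism case) are accurate but match the paper's reasoning rather than departing from it.
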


Before providing proof of our main result, we list the following well known example for later use.

\begin{exam}\label{aleph0}
 Let $F_g\times \Z=\langle a_1, a_2\ldots, a_g\rangle \times \langle c \rangle$ for $g\geq 2$, and let
$$\phi_{\aleph_0}: F_g\times \Z\to F_g\times \Z,$$
$$a_1\mapsto a_1c, ~a_2\mapsto a_2, \quad a_j= a_j^{-1}, ~j\geq 3; \quad c\mapsto c.$$
Then $\phi_{\aleph_0}\in\aut(F_g\times \Z)$, and by straightforward computations (see also Lemma \ref{adding lem 3}), we have
$$\fix\phi_{\aleph_0}=\langle a_1^na_2a_1^{-n}\mid n\in\Z\rangle \times \langle c \rangle\cong F_{\aleph_0}\times \Z.$$
\end{exam}

\subsection{Proofs of the main results}

\begin{proof}[\textbf{Proof of Theorem \ref{main thm3}}]
To prove Theorem \ref{main thm3}, by the characterization of subgroups of $G=F_g\times \Z^k$ $(g, k\geq 2)$ (see Lemma \ref{DV subgp}) and Proposition \ref{fixed can not big than Z^k}, it suffices to prove the following subgroups are aut-fixed up to isomorphism in $G$,
\begin{enumerate}
\item $F_t\times \Z^k, ~~0\leq t\leq g;$
\item $F_t\times \Z^s, ~~t\geq 0, ~~0\leq s\leq k-1;$
\item $F_{\aleph_0}\times \Z^s, ~~1\leq s\leq k.$
\end{enumerate}

Now we verify them case by case, by using the ``multiplicativity of aut-fixed subgroup" (Lemma \ref{product auo-fixed sbgp}). Recall that every subgroup of $\Z^k$ is aut-fixed up to isomorphism. Then,

(1) Proposition \ref{fixed subgp in free gp} implies that $F_t$ ($0\leq t\leq g$) is aut-fixed up to isomorphism in $F_g$, then $F_t\times \Z^k$ is aut-fixed up to isomorphism in $F_g\times \Z^k$.

(2) For any $t\geq 0$, Proposition \ref{key lem} implies that both of $F_t$ and $F_t\times \Z$ are aut-fixed up to isomorphism in $F_g\times \Z^2$, then $F_t=F_t\times \Z^0$ is aut-fixed up to isomorphism in $F_g\times \Z^k$. Moreover, if $1\leq s\leq k-1$, then $\Z^{s-1}$ is aut-fixed up to isomorphism in  $\Z^{k-2}$, and hence
$(F_t \times \Z) \times \Z^{s-1}$
is aut-fixed up to isomorphism in $(F_g\times \Z^2)\times \Z^{k-2}$, namely,
$F_t\times \Z^s$ is aut-fixed up to isomorphism in $F_t\times\Z^k$.

(3) Note that $1\leq s\leq k$ and $F_{\aleph_0}\times \Z$ is aut-fixed up to isomorphism in $F_g\times\Z$ (see Example \ref{aleph0}), then $F_{\aleph_0}\times \Z^s$ is aut-fixed up to isomorphism in $F_g\times\Z^k$, by similar arguments as in the above cases.
\end{proof}

\section{Fixed subgroups in $\pi_1(\Sigma_g)\times \Z^k$}\label{sect 4}

In this section, we first give the main results for $\pi_1(\Sigma_g)\times \Z^k$ ($g\geq 2, ~k\geq 1$), and then show some applications in Nielsen fixed point theory.

Note that the aut-fixed subgroups of $G=\pi_1(\Sigma_g)$ or $G=F_{2g}$ are the same in form, and are both $\{F_t ~(t< 2g), ~G\}$, see Proposition \ref{fixed subgp in free gp} and Proposition \ref{fixed subgp in surface gp}. Moreover, the main idea of proofs in Section \ref{sect 3} is to use the split exact sequence
$$1\to \ker(p_\phi)\hookrightarrow\fix\phi \xrightarrow{p_\phi} p(\fix\phi)\to 1,$$
to obtain
$$
\fix\phi\cong p(\fix\phi)\times\Z^s (s\leq k),
$$
which also holds for $\pi_1(\Sigma_g)\times \Z^k$ (see Lemma \ref{adding lem 1}). So the conclusions of $\pi_1(\Sigma_g)\times \Z^k$ are almost the same as that of $F_{2g}\times \Z^k$, except for replacing subgroups $F_{m(2g-1)+1}$ with $\pi_1(\Sigma_{m(g-1)+1})$.

%For the readability and completeness of this paper, we only list the main theorems in this section, but put their proofs in the appendix.

\subsection{Non-fixed subgroups in $\pi_1(\Sigma_g)\times \Z^k$}

\begin{prop}\label{fixed can not big than Z^k surf}
Let $\phi$ be an automorphism of $G=\pi_1(\Sigma_g)\times \Z^k$ ($g\geq 2, k\geq 1$). Then $\fix\phi$ is not isomorphic to any form below:
$$ F_{\aleph_0}, \quad F_t\times \Z^k ~(t\geq 2g), \quad  \pi_1(\Sigma_{m})\times \Z^k  ~(m> g).$$
\end{prop}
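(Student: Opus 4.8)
The plan is to run the proof of Proposition \ref{fixed can not big than Z^k} verbatim, now feeding in the surface-group inputs. First I would put $\phi$ into the standard form $\phi(u,v)=(\a(u),\g(u)+\mathcal{L}(v))$ supplied by Lemma \ref{standerd form}, with $\a\in\aut(\pi_1(\Sigma_g))$, $\mathcal{L}\in\aut(\Z^k)$, and $\g:\pi_1(\Sigma_g)\to\Z^k$ a homomorphism. Lemma \ref{adding lem 1} then gives the decomposition $\fix\phi\cong p(\fix\phi)\times\Z^s$ for some $s\leq k$, where $p(\fix\phi)\vartriangleleft\fix\a\leq\pi_1(\Sigma_g)$, and I would split into the cases $s<k$ and $s=k$ just as before.

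In the case $s<k$ I would exclude $F_{\aleph_0}$ by the center: $p(\fix\phi)\times\Z^s$ has nontrivial center once $s\geq1$, whereas $F_{\aleph_0}$ is centerless; and when $s=0$ the kernel $\ker(p_\phi)$ is trivial, so $\mathcal{L}$ has no eigenvalue $1$ and Lemma \ref{adding lem 2} forces $p(\fix\phi)$ to be finitely generated, hence $\not\cong F_{\aleph_0}$. To rule out $F_t\times\Z^k$ $(t\geq 2g)$ and $\pi_1(\Sigma_m)\times\Z^k$ $(m>g)$ I would observe that each of these targets is non-abelian with center exactly $\Z^k$. Since $p(\fix\phi)\leq\pi_1(\Sigma_g)$ is free or a higher-genus surface group by Lemma \ref{subgp of surface}, it is either centerless or infinite cyclic, so $\mathcal{C}(\fix\phi)$ has rank $s$ or $s+1\leq k$; a rank $<k$ produces a center-rank mismatch, while rank $=k$ forces $\fix\phi\cong\Z^k$ abelian, contradicting non-abelianness of the target.

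In the case $s=k$, which occurs exactly when $\mathcal{L}=\id$, I have $p(\fix\phi)=\{u\in\fix\a\mid\g(u)=0\}\vartriangleleft\fix\a$ and $\fix\phi\cong p(\fix\phi)\times\Z^k$. If $\g(\fix\a)=0$ then $p(\fix\phi)=\fix\a$, and here is the one place the surface case genuinely departs from the free case: by Proposition \ref{fixed subgp in surface gp} (which rests on the surface-group bound of \cite{JWZ11}) one has $\fix\a\cong\pi_1(\Sigma_g)$ or $F_t$ with $t<2g$, so the genus can never exceed $g$ and the free rank can never reach $2g$. Hence $\fix\phi\cong\pi_1(\Sigma_g)\times\Z^k$ or $F_t\times\Z^k$ $(t<2g)$, and Lemma \ref{iso type} (items (1)--(3), with genus pinned down by (2)) shows these avoid all three forbidden forms. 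If instead $\g(\fix\a)\neq0$, then Lemma \ref{adding lem 3} yields $p(\fix\phi)\cong F_{\aleph_0}$ or $1$, so $\fix\phi\cong F_{\aleph_0}\times\Z^k$ or $\Z^k$, both excluded by a finite-generation count together with the earlier center and abelianness remarks.

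The only genuinely new obstacle relative to the $F_g\times\Z^k$ argument is the subcase $s=k$, $\g(\fix\a)=0$: I must know that $\fix\a$ cannot be a surface group of genus $>g$ nor a free group of rank $\geq 2g$. This is precisely the strength of Proposition \ref{fixed subgp in surface gp}, so the whole statement reduces to the established surface-group fixed-subgroup bound plus the isomorphism-type bookkeeping of Lemma \ref{iso type}; everything else is the same routine center-and-rank comparison already used for $F_g\times\Z^k$.
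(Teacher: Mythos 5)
Your proposal is correct and follows essentially the same route as the paper's own proof: the standard form plus Lemma \ref{adding lem 1}, the case split $s<k$ versus $s=k$ (i.e.\ $\mathcal{L}=\id$), the sub-split on $\g(\fix\a)$ trivial or not, and the same key inputs (Lemmas \ref{adding lem 2}, \ref{adding lem 3}, \ref{subgp of surface}, \ref{iso type} and Proposition \ref{fixed subgp in surface gp}). The only cosmetic difference is that you spell out the center-rank comparison explicitly where the paper simply cites Lemma \ref{iso type}, and you correctly isolate the one genuinely surface-specific ingredient, namely the bound $\fix\a\cong\pi_1(\Sigma_g)$ or $F_t$ with $t<2g$.
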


\begin{proof}%[\textbf{Proof of Proposition \ref{fixed can not big than Z^k surf}}]
By Lemma \ref{standerd form}, an automorphism $\phi\in\aut(G)$ has a form:
$$\phi(u, v)=(\a(u), \g(u)+\mathcal{L}(v)), \quad\forall (u, v)\in \pi_1(\Sigma_g)\times \Z^k,$$
where $\a: \pi_1(\Sigma_g)\xrightarrow{\cong}  \pi_1(\Sigma_g)$ and $\mathcal{L}: \Z^k\xrightarrow{\cong}\Z^k$ are automorphisms, and
$\g:  \pi_1(\Sigma_g)\to \Z^k$ is a homomorphism.
Then by Lemma \ref{adding lem 1},
\begin{eqnarray}\label{equ 404}
\fix\phi &=&\{(u,v)\in  \pi_1(\Sigma_g)\times \Z^k\mid u\in\fix\a, ~v=\g(u)+\mathcal{L}(v)\}\\
&\cong& p(\fix\phi)\times \ker(p_\phi)\nonumber
\end{eqnarray}
where $p_\phi=p|_{\fix\phi}$ is the restriction of the projection $p:  \pi_1(\Sigma_g)\times \Z^k\to  \pi_1(\Sigma_g)$, and
\begin{eqnarray}\label{eq 405.5}
p(\fix\phi)&=&\{u\in \fix\a \mid \exists v=\g(u)+\mathcal{L}(v)\in \Z^k\}\vartriangleleft \fix\a,\\
\ker(p_\phi)&=&\{v\in\Z^k\mid v=\mathcal{L}(v)\}\cong \Z^s, \quad 0\leq s\leq k.\notag
\end{eqnarray}

Now we have two cases:

Case ($s<k$). By Lemma \ref{surface subgp} and Lemma \ref{iso type}, we have
$$\fix\phi\not\cong F_t\times \Z^k ~(t\geq 2g), ~\pi_1(\Sigma_{m})\times \Z^k ~(m> g).$$
In particular, if $s=0$, then $\ker(p_\phi)=1$ and hence $\mathcal L$ has no eigenvalue $1$. Therefore, by Lemma \ref {adding lem 2}, we have
$$\fix\phi\cong p(\fix\phi)\not\cong F_{\aleph_0}.$$

Case ($s=k$). In this case, the linear map $\mathcal{L}: \Z^k\xrightarrow{\cong}\Z^k$ is the identity, and Eq. (\ref{eq 405.5}) becomes
$$
p(\fix\phi)=\{u\in  \fix\a \mid \g(u)=0\}\vartriangleleft \fix\a.
$$

If $\g(\fix\a)$ is trivial, i.e., $\g(u)=0$ for all $u\in\fix\a$, then $p(\fix\phi)=\fix\a$. Combining Eq. (\ref{equ 404}), Proposition \ref{fixed subgp in surface gp} and Lemma \ref{iso type}, we get
$$
\fix\phi=\fix\a\times \Z^k\not\cong F_{\aleph_0}, ~F_t\times \Z^k ~(t\geq 2g), ~\pi_1(\Sigma_{m})\times \Z^k  ~(m> g).
$$

If $\g(\fix\a)$ is nontrivial, then by Lemma \ref {adding lem 3}, we have
$$
p(\fix\phi)\cong F_{\aleph_0} \quad \mathrm{or} \quad 1,
$$
and hence
\begin{equation}\label{eq 430}
\fix\phi\cong p(\fix\phi)\times \Z^k\not\cong F_{\aleph_0}, ~F_t\times \Z^k ~(t\geq 2g), ~\pi_1(\Sigma_{m})\times \Z^k  ~(m> g),\nonumber
\end{equation}
where the ``$\cong$" follows from Eq. (\ref{equ 404}) (recall $\aleph_0\neq t\in\N$).
\end{proof}

\subsection{Fixed subgroups in $\pi_1(\Sigma_g)\times \Z^k$ for $k\geq 2$}

Now we show which subgroups are aut-fixed in $G=\pi_1(\Sigma_g)\times \Z^k$ ($g, k\geq 2$).

\begin{prop}\label{thm for id surf}
For any $t\geq 0,~ m\geq 1$, and $0\leq s\leq k-1$, the following groups
$$F_t\times \Z^s, \quad \pi_1(\Sigma_{m(g-1)+1})\times \Z^s,$$
are aut-fixed up to isomorphism in $G=\pi_1(\Sigma_g)\times \Z^k$ ($g, k\geq 2$).
\end{prop}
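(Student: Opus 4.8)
The plan is to reduce to base cases inside $\pi_1(\Sigma_g)\times\Z^2$ and then bootstrap via the multiplicativity of aut-fixed subgroups. By Lemma \ref{product auo-fixed sbgp} together with Lemma \ref{fixed sbgp in Z^K} (every subgroup of a free-abelian group is aut-fixed up to isomorphism), it suffices to exhibit, inside $G_0=\pi_1(\Sigma_g)\times\Z^2$, automorphisms whose fixed subgroups are isomorphic to $F_t$, $F_t\times\Z$, $\pi_1(\Sigma_{m(g-1)+1})$, and $\pi_1(\Sigma_{m(g-1)+1})\times\Z$; multiplying each with a fixed subgroup $\Z^{s'}$ of the remaining factor $\Z^{k-2}$ then produces every $F_t\times\Z^s$ and every $\pi_1(\Sigma_{m(g-1)+1})\times\Z^s$ with $0\leq s\leq k-1$. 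Throughout I will use the standard form $\phi(u,v)=(\a(u),\g(u)+\mathcal{L}(v))$ of Lemma \ref{standerd form}, which is automatically an automorphism whenever $\a\in\aut(\pi_1(\Sigma_g))$, $\mathcal{L}\in\aut(\Z^2)$ and $\g$ is a homomorphism, and I will take $\g(u)=\binom{\nu(u,a_1)}{0}$, where $\nu(u,a_1)$ is the exponent sum of $a_1$; this is a well-defined homomorphism since it factors through the abelianization and the surface relation abelianizes trivially.

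For the free-type targets $F_t$ and $F_t\times\Z$ (with $t\geq 2$; the cases $t=0,1$ are immediate from Proposition \ref{fixed subgp in surface gp} and multiplicativity), I will take $\a$ to be the automorphism $\phi_1$ of Proposition \ref{fixed subgp in surface gp}, so that $\fix\a=\langle a_1,b_1\rangle\cong F_2$, and reuse the two matrices of Proposition \ref{key lem}, namely $\mathcal{L}=\left(\begin{smallmatrix}t&t-1\\1&1\end{smallmatrix}\right)$ and $\mathcal{L}=\left(\begin{smallmatrix}1&t-1\\0&1\end{smallmatrix}\right)$, both of determinant $1$. Solving $(\id-\mathcal{L})v=\g(u)$ exactly as in Proposition \ref{key lem} shows that $p(\fix\phi)=\{u\in\langle a_1,b_1\rangle \mid \nu(u,a_1)\equiv 0 \bmod (t-1)\}$ is the kernel of the epimorphism $\langle a_1,b_1\rangle\twoheadrightarrow\Z/(t-1)$, hence an index-$(t-1)$ subgroup of $F_2$, which by Schreier's formula (Lemma \ref{F_2 subgp}) is isomorphic to $F_{(t-1)(2-1)+1}=F_t$. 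The first matrix forces the free-abelian coordinate to vanish and yields $\fix\phi\cong F_t$, while the second leaves one coordinate free and yields $\fix\phi\cong F_t\times\Z$, just as in the free-group computation.

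For the surface-type targets I will instead take $\a=\id$, so that $\fix\a=\pi_1(\Sigma_g)$ and $\nu(\cdot,a_1)\colon\pi_1(\Sigma_g)\twoheadrightarrow\Z$ is an epimorphism, and use the matrices $\mathcal{L}=\left(\begin{smallmatrix}m+1&m\\1&1\end{smallmatrix}\right)$ and $\mathcal{L}=\left(\begin{smallmatrix}1&m\\0&1\end{smallmatrix}\right)$. The same linear algebra gives $p(\fix\phi)=\{u\in\pi_1(\Sigma_g)\mid\nu(u,a_1)\equiv 0\bmod m\}$, a normal subgroup of index $m$, which by Lemma \ref{subgp of surface}(2) is isomorphic to $\pi_1(\Sigma_{m(g-1)+1})$. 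As before, the first matrix has $\det(\id-\mathcal{L})\neq 0$ and collapses the abelian coordinate, so $\fix\phi\cong\pi_1(\Sigma_{m(g-1)+1})$, whereas the second leaves one free coordinate and gives $\fix\phi\cong\pi_1(\Sigma_{m(g-1)+1})\times\Z$. Assembling the four base cases with $\Z^{s'}\leq\Z^{k-2}$ via Lemma \ref{product auo-fixed sbgp} then covers the full range $0\leq s\leq k-1$.

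I expect the only real subtlety to be bookkeeping rather than a genuine obstacle: in each of the four cases one must verify that $\g$ restricts to a surjection of the relevant base group onto $\Z$ (so that the congruence condition cuts out exactly the intended index), and that the kernel of $p|_{\fix\phi}$ is trivial or $\cong\Z$ according to whether $\det(\id-\mathcal{L})$ is nonzero or zero, so that $\fix\phi\cong p(\fix\phi)$ or $\fix\phi\cong p(\fix\phi)\times\Z$. A sign slip or off-by-one in the index would alter the Schreier rank or the genus, so the matrices must be matched carefully to the desired $t$ and $m$; once this is done, the passage to general $s$ is routine.
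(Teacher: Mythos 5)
Your proposal is correct; the computational core coincides with the paper's, but the reduction is organized genuinely differently. The paper proves the proposition by one direct construction inside $\pi_1(\Sigma_g)\times\Z^k$ for each $s$: it takes $\mathcal{L}$ to be the block-diagonal matrix $B=\left(\begin{smallmatrix}A_\ell&0\\0&I_s\end{smallmatrix}\right)$ with $\ell=k-s\geq 2$, where the $\ell\times\ell$ block $A_\ell$ (first row $(m+1,m,\ldots,m)$, ones on and below the diagonal elsewhere, zeros above) has $\det(\id-A_\ell)\neq 0$, so the linear system forces $s_1=\cdots=s_{\ell-1}=0$ and $s_\ell=-\nu(u,a_1)/m$, while the $I_s$ block survives as the $\Z^s$ factor; the two choices $\a=\id$ and $\a=\phi_1$ then yield $\pi_1(\Sigma_{m(g-1)+1})\times\Z^s$ and $F_{m+1}\times\Z^s$ respectively, with the extremal case $s=k-1$ handled separately via $\left(\begin{smallmatrix}1&m\\0&1\end{smallmatrix}\right)$. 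You instead construct automorphisms only of $\pi_1(\Sigma_g)\times\Z^2$ --- with the same $2\times 2$ matrices, the same exponent-sum homomorphism $\g$, and the same two choices of $\a$ --- and reach every $0\leq s\leq k-1$ by Lemma \ref{product auo-fixed sbgp} together with Lemma \ref{fixed sbgp in Z^K} applied to the complementary factor $\Z^{k-2}$; your range count (bases without the $\Z$ factor give $0\leq s\leq k-2$, bases with it give $1\leq s\leq k-1$) is right, as are the index arguments via Schreier's formula and Lemma \ref{subgp of surface}. Notably, your strategy is exactly the one the paper itself uses for the free-group analogue (Proposition \ref{key lem} feeding into the proof of Theorem \ref{main thm3}), so it makes the free and surface cases uniform and replaces the $\ell\times\ell$ linear algebra by an already-verified $2\times 2$ computation; what the paper's version buys is an explicit automorphism of the full group $\pi_1(\Sigma_g)\times\Z^k$ for each pair $(m,s)$, which is the form later quoted in the Nielsen fixed point application (Theorem \ref{high-dim cout-examp.}). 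One small wording caveat: $\det(\id-\mathcal{L})=0$ does not by itself force $\ker(p|_{\fix\phi})\cong\Z$ (it would be $\Z^2$ for $\mathcal{L}=\id$), but for the unipotent matrices you actually use the eigenspace of $1$ has rank exactly one, so your dichotomy is valid as applied.
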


\begin{proof}%[\textbf{Proof of Proposition \ref{thm for id surf}}]
Clearly, $F_t\times \Z^s$ $(t\leq 2g-1, s\leq k)$ is aut-fixed up to isomorphism in $G=\pi_1(\Sigma_g)\times \Z^k$ $(g, k\geq 2)$ by Proposition \ref{fixed subgp in surface gp}.
To prove the rest cases, let us pick a presentation
	$$\pi_1(\Sigma_g)=\langle a_1,b_1,\cdots ,a_g,b_g\mid [a_1,b_1]\cdots [a_g,b_g]=1 \rangle$$
and pick a basis $\{c_1,\ldots, c_k\}$ of $\Z^k$. Then an element $c_1^{s_1}c_2^{s_2}\cdots c_k^{s_k}\in \Z^k$ can be denoted by a column vector $(s_1, s_2,\ldots, s_k)^T$.\\

\textbf{Case (1).} $0\leq s\leq k-2$.

Let $\a: \pi_1(\Sigma_g)\to \pi_1(\Sigma_g)$ be an automorphism, and
define homomorphisms as follows:
 $$\g: \pi_1(\Sigma_g)\to \Z^k,$$
$$\g(a_1)=c_1, \quad \g(a_i)=\g(b_j)=1, ~ i=2,\ldots, g, ~~j=1,\ldots, g,$$
and
$$\mathcal{L}: \Z^k\to\Z^k,$$
$$\mathcal{L}(s_1, s_2,\ldots, s_k)^T=B(s_1, s_2,\ldots, s_k)^T,$$
where $B$ is a block diagonal matrix
\begin{equation}\label{da juzhen B}\nonumber
B=\begin{pmatrix}
A_\ell&0\\
0& I_{s}\\
\end{pmatrix}
\end{equation}
with $I_{s}$ ($0\leq s\leq k-2$) the identity and
$$
	A_\ell=\begin{pmatrix}
		m+1&m&m&\cdots&m&m\\
		1&1&0&\cdots&0&0\\
        1&1&1&\cdots&0&0\\
		\vdots&\vdots&\vdots&\ddots&\vdots&\vdots\\
		1&1&1&\cdots&1&0\\
		1&1&1&\cdots&1&1\\
	\end{pmatrix}
$$
an $\ell\times \ell$ matrix for some $2\leq \ell:=k-s\leq k$. Note that the first row of $A_\ell$ is $(m+1, m, \ldots, m)$, the $(i,j)$-entry ($2\leq i< j$) is $0$, and the other entries are all 1.  So $\det(B)=\det(A_\ell)=1$ and hence $\mathcal{L}$ is an automorphism of $\Z^k$.
Let
\begin{eqnarray}
\phi: \pi_1(\Sigma_g)\times\Z^k \to \pi_1(\Sigma_g)\times \Z^k ,\notag\\
\phi(u,v)=(\a(u), \g(u)+\mathcal{L}(v)).\notag
\end{eqnarray}
Then $\phi$ is an automorphism with the inverse $\phi^{-1}(u, v)=(\a^{-1}(u), ~\mathcal{L}^{-1}(v-\g(\a^{-1}(u)))$.\\

Let us consider the fixed subgroup of $\phi$.

Suppose $v=(s_1, s_2,\ldots, s_k)^T$, $\g(u)=(\nu(u,a_1), 0,\ldots, 0)^T\in\Z^k$, where $\nu(u,a_1)$ is the sum of powers of $a_1$ in $u$.
Let
$$N:=\{(u, (s_1,\ldots, s_\ell)^T)\in \fix\a\times \Z^{\ell}\mid (A_\ell-I_\ell)(s_1,\ldots, s_\ell)^T=(-\nu(u,a_1), 0, \ldots, 0)^T\}.$$
Then
\begin{eqnarray}\label{aequ 1}
\fix\phi&=&\{(u,v)\in \pi_1(\Sigma_g)\times \Z^k\mid u\in\fix\a,~ v=\g(u)+\mathcal{L}(v)\}\notag\\
&=& N\times \Z^{s}, \quad s=k-\ell.
\end{eqnarray}
Note that the linear equation
$(A_\ell-I_\ell)(s_1,\ldots, s_\ell)^T=(-\nu(u,a_1), 0, \ldots, 0)^T$
has roots
$$s_\ell=-\frac{\nu(u,a_1)}{m}, \quad s_1=s_2=\cdots s_{\ell-1}=0,$$
Therefore, we have
\begin{equation}
N=\{(u, ~(0,\ldots, 0, -\frac{\nu(u,a_1)}{m})^T)\in \fix\a\times \Z^\ell\mid ~\nu(u,a_1)\equiv 0 \mod m\}.\notag
\end{equation}

Now we consider the projection $$p: \pi_1(\Sigma_g)\times \Z^\ell\to \pi_1(\Sigma_g), \quad p(u, v)=u.$$
Then the restriction $p|_N: N\twoheadrightarrow p(N)$ of $p$ on $N$ has trivial kernel
$$\ker(p|_N)=N\cap\ker(p)=N\cap\Z^\ell=0.$$
So we have an isomorphism
\begin{equation}\label{aequ 2}
N\cong p(N)=\{u\in \fix\a\mid \nu(u,a_1)\equiv 0 \mod m\},
\end{equation}
which is a normal subgroup of $\fix\a$ with index $m\geq 1$ in the following two cases.

Therefore, if we pick $\a=\id:\pi_1(\Sigma_g)\to\pi_1(\Sigma_g)$,
then $\fix\a=\pi_1(\Sigma_g)$, and Eq. (\ref{aequ 2}) implies
\begin{equation}\label{aequ 2.2}
N\cong p(N)\cong \pi_1(\Sigma_{m(g-1)+1}),
\end{equation}
according to Lemma \ref{subgp of surface}.
Furthermore, combining Eq. (\ref{aequ 1}) and Eq. (\ref{aequ 2.2}), we have
$$\fix\phi\cong \pi_1(\Sigma_{m(g-1)+1})\times \Z^{s}.$$

If we pick $\a=\phi_1:\pi_1(\Sigma_g)\to\pi_1(\Sigma_g)$ as in Proposition \ref{fixed subgp in surface gp}, then
$\fix\a=\langle a_1, b_1 \rangle\cong F_2,$
and \begin{equation}\label{aequ 2.3}
N\cong p(N)\cong F_{m+1},
\end{equation}
because $p(N)$ is a subgroup of $\fix\a\cong F_2$ with index $m\geq 1$. Hence $\fix\phi\cong F_{m+1}\times \Z^{s}.$ Recall that $0\leq s\leq k-2$, so Case (1) is finished.\\

\textbf{Case (2).} $s=k-1$.

By using the same notations as in Case (1) but
$$
A=
\begin{pmatrix}
1&m\\
0&1\\
\end{pmatrix},
\quad
B=
\begin{pmatrix}
A&0\\
0&I_{k-2}\\
\end{pmatrix}.
$$
Then, after the same arguments as in Case (1), Eq. (\ref{aequ 1}) becomes
\begin{equation}\label{aequ 2.4}
\fix\phi=N\times \Z^{k-2},
\end{equation}
where
\begin{eqnarray}\label{aequ 2.5}
N&=&
\left\{(u, \begin{pmatrix}
s_1\\
s_2\\
\end{pmatrix})\in \fix\a\times \Z^2~\Big|~ \begin{pmatrix}
0&m\\
0&0\\
\end{pmatrix}
\begin{pmatrix}
s_1\\
s_2\\
\end{pmatrix}=
\begin{pmatrix}
-\nu(u, a_1)\\
0\\
\end{pmatrix}\right\}\notag\\
&=&
\{(u,s_2)\in \fix\a\times \Z\mid s_2=-\frac{\nu(u, a_1)}{m}, ~\nu(u, a_1)\equiv 0 \mod m\}\times \Z\notag\\
&\cong& \begin{cases}
\pi_1(\Sigma_{m(g-1)+1})\times\Z, ~ \quad \a=\id,\\
F_{m+1}\times\Z, ~ ~ \quad \a=\phi_1.
\end{cases} ,
\end{eqnarray}
where ``$\cong$'' holds by the same arguments as in Eq. (\ref{aequ 2.2}) and (\ref{aequ 2.3}). Then by Eq. (\ref{aequ 2.4}) and Eq. (\ref{aequ 2.5}), we are done.
\end{proof}

\subsection{Fixed subgroups in $\pi_1(\Sigma_g)\times \Z$}

In contrast with $\pi_1(\Sigma_g)\times \Z^k ~(k\geq 2)$, $\pi_1(\Sigma_g)\times \Z$ contains only finitely many aut-fixed subgroups up to isomorphism. Moreover, we have a complete classification of aut-fixed subgroups in $\pi_1(\Sigma_g)\times \Z$.

\begin{thm}\label{main thm2' surf}
A subgroup of $\pi_1(\Sigma_g)\times \Z$ ($g\geq 2$) is aut-fixed up to isomorphism if and only if, it has one of the following forms:
$$F_{2t-1} ~(1\leq t< 2g), \quad \pi_1(\Sigma_{2g-1}), \quad F_{\aleph_0}\times \Z,$$
$$\pi_1(\Sigma_g)\times \Z^s, \quad F_t\times \Z^s ~(0\leq t<2g, ~s=0,1).$$
\end{thm}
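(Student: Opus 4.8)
The plan is to run the proof of Theorem \ref{main thm2'} almost verbatim, since the only structural change is that for $\a\in\aut(\pi_1(\Sigma_g))$ the fixed subgroup $\fix\a$ is now, by Proposition \ref{fixed subgp in surface gp}, either the whole group $\pi_1(\Sigma_g)$ or a free group $F_t$ with $t<2g$, rather than being free as in the $F_g$ case. First I would invoke Lemma \ref{standerd form} to write $\phi(u,v)=(\a(u),\g(u)+\mathcal L(v))$ with $\a\in\aut(\pi_1(\Sigma_g))$, $\mathcal L=\pm\id$ on $\Z$, and $\g:\pi_1(\Sigma_g)\to\Z$ a homomorphism; then Lemma \ref{adding lem 1} gives the backbone decomposition $\fix\phi\cong p(\fix\phi)\times\ker(p_\phi)$, where $\ker(p_\phi)\cong\Z^s$ with $s=1$ when $\mathcal L=\id$ and $s=0$ when $\mathcal L=-\id$, and $p(\fix\phi)=\{u\in\fix\a\mid v=\g(u)+\mathcal L(v)\text{ is solvable in }\Z\}$.

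For the necessity direction I would split on $\mathcal L$. When $\mathcal L=\id$: if $\g|_{\fix\a}=0$ then $p(\fix\phi)=\fix\a$, so $\fix\phi\cong\fix\a\times\Z\cong\pi_1(\Sigma_g)\times\Z$ or $F_t\times\Z$ with $t<2g$; if $\g|_{\fix\a}\neq0$ then Lemma \ref{adding lem 3} forces $p(\fix\phi)\cong F_{\aleph_0}$ or $1$, hence $\fix\phi\cong F_{\aleph_0}\times\Z$ or $\Z$. When $\mathcal L=-\id$ we have $\ker(p_\phi)=0$, so $\fix\phi\cong p(\fix\phi)=\{u\in\fix\a\mid\g(u)\equiv0\bmod2\}$, an index-$\leq2$ normal subgroup of $\fix\a$. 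Here the surface case genuinely diverges from the free template: index $1$ returns $\fix\a\cong\pi_1(\Sigma_g)$ or $F_t$, while an index-$2$ subgroup is $\cong\pi_1(\Sigma_{2g-1})$ when $\fix\a\cong\pi_1(\Sigma_g)$ (by Lemma \ref{subgp of surface}) and $\cong F_{2t-1}$ when $\fix\a\cong F_t$ (by Schreier's formula, Lemma \ref{F_2 subgp}). Collecting every possibility produces exactly the list in the statement.

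For sufficiency I would realise each listed form by an explicit $\phi(u,v)=(\a(u),\g(u)+\mathcal L(v))$, which is automatically an automorphism with inverse $\phi^{-1}(u,v)=(\a^{-1}(u),\mathcal L^{-1}(v-\g(\a^{-1}(u))))$. Choosing $\g=0$ together with $\mathcal L=\id$ or $-\id$ and with $\a$ selected via Proposition \ref{fixed subgp in surface gp} so that $\fix\a\cong\pi_1(\Sigma_g)$ or $F_t$ realises all of $\pi_1(\Sigma_g)\times\Z^s$ and $F_t\times\Z^s$ ($t<2g$, $s\in\{0,1\}$). For $F_{\aleph_0}\times\Z$ I would take $\a=\id$, $\mathcal L=\id$, and $\g(a_1)=1$ with $\g=0$ on the remaining generators (a valid homomorphism, since the surface relation maps to $0$ in the abelian target $\Z$); then $\fix\phi=\ker\g\times\Z\cong F_{\aleph_0}\times\Z$ because $\ker\g$ is a nontrivial infinite-index normal subgroup of $\pi_1(\Sigma_g)$. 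Finally, for $\pi_1(\Sigma_{2g-1})$ and $F_{2t-1}$ I would set $\mathcal L=-\id$ together with a $\g$ taking an odd value on $\fix\a$, forcing the index-$2$ situation above.

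The only step using surface topology rather than the free-group bookkeeping, and hence the main point of the argument, is the $\mathcal L=-\id$ branch: one must recognise that the index-$2$ normal subgroup $\{u\in\fix\a\mid\g(u)\equiv0\bmod2\}$ of a closed surface group is again a \emph{closed} surface group $\pi_1(\Sigma_{2g-1})$, which is exactly the covering-space computation $2-2r=2(2-2g)$ behind Lemma \ref{subgp of surface}. Beyond this, the remaining care is routine: verifying that a homomorphism $\g:\pi_1(\Sigma_g)\to\Z$ hitting an odd value on the prescribed subgroup $\fix\a$ always exists, and tracking the ranges so that $F_{2t-1}$ appears precisely for $1\leq t<2g$ and the surface factor precisely at genus $2g-1$.
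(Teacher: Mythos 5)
Your proposal is correct and follows essentially the same route as the paper's proof: the standard form from Lemma \ref{standerd form}, the decomposition $\fix\phi\cong p(\fix\phi)\times\ker(p_\phi)$ of Lemma \ref{adding lem 1}, the case split on $\mathcal{L}=\pm\id$ with Lemma \ref{adding lem 3} handling $\mathcal{L}=\id$, and Lemma \ref{F_2 subgp} together with Lemma \ref{subgp of surface} converting the index-$2$ subgroup into $F_{2t-1}$ or $\pi_1(\Sigma_{2g-1})$; your split on whether $\g(\fix\a)$ is trivial (rather than the paper's split on $\g(\pi_1(\Sigma_g))$) actually matches the hypothesis of Lemma \ref{adding lem 3} more precisely. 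One caveat: your claim that a homomorphism $\g:\pi_1(\Sigma_g)\to\Z$ taking an odd value on $\fix\a$ ``always exists'' fails when $\fix\a\cong\Z$ is generated by a separating simple closed curve $c$ (the $H\cong\Z$ construction in Proposition \ref{fixed subgp in surface gp}), since $c$ is a product of commutators and hence $\g(c)=0$ for every such $\g$. This is harmless, because the corresponding group $F_{2\cdot 1-1}=F_1\cong\Z$ is already realized by your $\g=0$ constructions, but the odd-value trick for realizing $F_{2t-1}$ should be restricted to $t\geq 2$, where $\fix\a$ can be chosen (via the explicit automorphisms of Proposition \ref{fixed subgp in surface gp}) to contain the standard generator $a_1$.
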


\begin{rem}
In \cite{Z12}, the third author investigated the relationship between the fixed subgroups and fixed points of homeomorphisms of Seifert 3-manifolds, and obtained partial results similar to Theorem \ref{main thm2' surf} by using topological methods.
\end{rem}

By directly counting the isomorphic types of subgroups in Theorem \ref{main thm2' surf}, or replace $g$ in the formula in Corollary \ref{main cor2'} with $2g$ (see the beginning part of this section for a reason), we have:

\begin{cor}\label{main cor2' surf}
$\pi_1(\Sigma_g)\times \Z$ ($g\geq 2$) contains, up to isomorphism, only $5g+2$ fixed subgroups of automorphisms.
\end{cor}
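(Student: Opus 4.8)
The plan is to read off the count directly from the classification in Theorem \ref{main thm2' surf}, organizing the listed isomorphism types by structural invariants and using Lemma \ref{iso type} to detect exactly which entries coincide. I would partition the list into four families: (a) the finite-rank free groups $F_m$, which arise as $F_t$ with $s=0$, as $F_{2t-1}$, and as the degenerate $F_0\times\Z$; (b) the groups $F_t\times\Z$ with $t\geq 1$ (nontrivial center); (c) the three surface-type groups $\pi_1(\Sigma_g)$, $\pi_1(\Sigma_g)\times\Z$, and $\pi_1(\Sigma_{2g-1})$; and (d) the single infinite-rank group $F_{\aleph_0}\times\Z$. By Lemma \ref{iso type}, together with the fact that free groups of rank $\geq 2$ have trivial center while surface groups are neither free nor of the form $F_t\times\Z$, no group in one family is isomorphic to any group in another; so it suffices to count each family separately and add.

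For family (a) the only real work is computing a union of rank sets: the entries $F_t$ ($0\leq t\leq 2g-1$) contribute the ranks $\{0,1,\dots,2g-1\}$, the entries $F_{2t-1}$ ($1\leq t\leq 2g-1$) contribute the odd ranks $\{1,3,\dots,4g-3\}$, and $F_0\times\Z\cong\Z\cong F_1$ contributes nothing new. The odd ranks not already below $2g$ are $2g+1,2g+3,\dots,4g-3$, which number $g-1$, so family (a) has $2g+(g-1)=3g-1$ members. For family (b), the candidates are $F_t\times\Z$ ($0\leq t\leq 2g-1$); here $F_0\times\Z\cong\Z$ has already been counted in (a), while $F_1\times\Z\cong\Z^2$ and $F_t\times\Z$ ($2\leq t\leq 2g-1$) are pairwise non-isomorphic by Lemma \ref{iso type}, giving $2g-1$ types. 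Family (c) contributes $3$ (the genera $g$ and $2g-1$ differ for $g\geq 2$, and $\pi_1(\Sigma_g)\times\Z$ is separated by its nontrivial center), and family (d) contributes $1$. Summing yields
$$(3g-1)+(2g-1)+3+1=5g+2.$$

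The only delicate point will be the bookkeeping in family (a): one must correctly account for the overlap between the two ranges of finite free ranks and for the collapse $F_0\times\Z\cong F_1$, so as not to over- or under-count. This is routine rather than a genuine obstacle, since Lemma \ref{iso type} rigidly determines every isomorphism among the groups in question. As an independent sanity check, I would also recover the same total by substituting $g\mapsto 2g$ into the formula $2g+2+\lfloor g/2\rfloor$ of Corollary \ref{main cor2'}, which gives $4g+2+g=5g+2$; this mirrors the parallel between the aut-fixed subgroups of $\pi_1(\Sigma_g)\times\Z$ and those of $F_{2g}\times\Z$ described at the start of this section, each top-rank free type being replaced by the corresponding surface type of the same isomorphism class.
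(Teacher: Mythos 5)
Your proposal is correct and takes essentially the same approach as the paper: the paper's proof consists exactly of the two methods you use, namely a direct count of the isomorphism types listed in Theorem \ref{main thm2' surf} and, as an alternative, the substitution $g\mapsto 2g$ in the formula $2g+2+\lfloor g/2\rfloor$ of Corollary \ref{main cor2'}. Your bookkeeping (the union of rank sets giving $3g-1$ finite-rank free types, the collapses $F_0\times\Z\cong F_1$ and $F_1\times\Z\cong\Z^2$, and the separation of families via centers and Lemma \ref{iso type}) is accurate and simply fills in the details the paper leaves implicit.
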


In contrast, for fixed subgroups of endomorphisms, we have:

\begin{prop}\label{endo-fixed sbgp surface}
$\pi_1(\Sigma_g)\times \Z$ ($g\geq 2$) contains, up to isomorphism, infinitely many fixed subgroups of endomorphisms.
\end{prop}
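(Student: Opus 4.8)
The plan is to mirror the construction given in the remark after Theorem~\ref{main thm2'} for $F_g\times\Z$, now invoking Lemma~\ref{subgp of surface} in place of Schreier's formula. Write $G=\pi_1(\Sigma_g)\times\langle c\rangle$ and recall that the abelianization of $\pi_1(\Sigma_g)$ is free abelian of rank $2g$; hence any assignment of integers to the generators $a_i,b_i$ extends to a well-defined homomorphism $\g\colon\pi_1(\Sigma_g)\to\Z$, since the single surface relator dies in every abelian group. I would fix such a $\g$ with $\g(a_1)=1$ and $\g$ trivial on all remaining generators, so that $\g$ is surjective.

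For each integer $m\geq 1$ I would define the endomorphism
$$\phi_m\colon G\to G,\qquad \phi_m(u,v)=\big(u,\ \g(u)+(m+1)v\big).$$
Note that $\phi_m$ is genuinely an endomorphism but \emph{not} an automorphism, because the induced map $v\mapsto(m+1)v$ on $\Z$ fails to be surjective once $m\geq 1$; this is precisely what lets us escape the finiteness of Theorem~\ref{main thm2' surf}. A direct computation of the fixed subgroup gives the defining condition $v=\g(u)+(m+1)v$, i.e. $mv=-\g(u)$, which admits an (automatically unique) integer solution $v=-\g(u)/m$ exactly when $\g(u)\equiv 0\mod m$. Since the projection to the first coordinate is injective on $\fix\phi_m$, this yields
$$\fix\phi_m\ \cong\ \{u\in\pi_1(\Sigma_g)\mid \g(u)\equiv 0\mod m\}.$$

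Because $\g$ is onto, composing it with reduction modulo $m$ exhibits this subgroup as the kernel of an epimorphism $\pi_1(\Sigma_g)\twoheadrightarrow\Z/m\Z$, hence a subgroup of index exactly $m$. By Lemma~\ref{subgp of surface}(2) it is isomorphic to $\pi_1(\Sigma_{m(g-1)+1})$, so $\fix\phi_m\cong\pi_1(\Sigma_{m(g-1)+1})$. As $m$ runs over the positive integers the genus $m(g-1)+1$ is strictly increasing (here $g\geq 2$ is used), so these groups are pairwise non-isomorphic by Lemma~\ref{iso type}, producing infinitely many isomorphism types of fixed subgroups of endomorphisms, as required.

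The computations are entirely routine and parallel to the free-group case already in the paper; the only points needing care are the well-definedness of $\g$, which I would justify through the abelianization as above, and the index count, which relies on the surjectivity of $\g$. I do not expect any serious obstacle, since the argument is the verbatim surface-group analogue of the existing construction, with Lemma~\ref{subgp of surface} supplying the isomorphism type of the index-$m$ subgroup.
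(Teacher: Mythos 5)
Your proposal is correct and is essentially the paper's own proof: the same endomorphism $\phi_m(u,v)=(u,\g(u)+(m+1)v)$ with $\g(a_1)=1$ and $\g$ trivial on the other generators, the same identification of $\fix\phi_m$ with the index-$m$ kernel $\{u\mid \g(u)\equiv 0 \bmod m\}$, and the same appeal to Lemma \ref{subgp of surface} to conclude $\fix\phi_m\cong\pi_1(\Sigma_{m(g-1)+1})$. The only differences are cosmetic: you spell out the well-definedness of $\g$ via the abelianization and the pairwise non-isomorphism of the resulting surface groups, which the paper leaves implicit.
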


\begin{proof}[\textbf{Proof of Theorem \ref{main thm2' surf}}]
Let $G=\pi_1(\Sigma_g)\times \Z$ ($g\geq 2$). Then an automorphism $\phi\in\aut(G)$ has the form:
$$\phi(u, v)=(\a(u), \g(u)+\mathcal{L}(v)), \quad\forall (u, v)\in \pi_1(\Sigma_g)\times \Z,$$
where $\a: \pi_1(\Sigma_g)\xrightarrow{\cong}  \pi_1(\Sigma_g)$ and $\mathcal{L}=\pm \id: \Z\xrightarrow{\cong}\Z$ are automorphisms, and
$\g:  \pi_1(\Sigma_g)\to \Z$ is a homomorphism.
Then
\begin{eqnarray}\label{equ 441}\nonumber
\fix\phi=\{(u,v)\in  \pi_1(\Sigma_g)\times \Z\mid u\in\fix\a, ~v=\g(u)+\mathcal{L}(v)\}.
\end{eqnarray}

Case (1). If $\g(\pi_1(\Sigma_g))=0$, then $\phi=\a\times \mathcal L$, and
$$\fix\phi=\fix\a\times \fix\mathcal L\cong F_t\times \Z^s, ~\pi_1(\Sigma_g)\times \Z^s~~~(t<2g, s=0, 1).$$
By Proposition \ref{fixed subgp in surface gp}, for any $H$ with one of the above forms, there exists an automorphism $\phi\in\aut(G)$
such that $\fix\phi\cong H$.\\

Case (2). If $\g(\pi_1(\Sigma_g))\neq 0$, by considering the projection $$p:  \pi_1(\Sigma_g)\times \Z\to  \pi_1(\Sigma_g), \quad p(u, v)=u,$$
we have
$$
\fix\phi\cong p(\fix\phi)\times \ker(p_\phi),
$$
where $p_\phi$ is the restriction of $p$ on $\fix\phi$ and
$$p(\fix\phi)=\{u\in\fix\a \mid \exists v=\g(u)+\mathcal{L}(v)\in\Z\},$$
$$\ker(p_\phi)=\{v\in\Z\mid v=\mathcal{L}(v)\}\cong \Z^s, ~s=0,1.$$
If $\mathcal L=\id$, then $p(\fix\phi)=F_{\aleph_0}$ or trivial. We have
$$\fix\phi\cong p(\fix\phi)\times\Z\cong F_{\aleph_0}\times \Z \quad \mathrm{or} \quad \Z.$$
If $\mathcal L=-\id$, then $\ker(p_\phi)=\{0\}$ and
\begin{eqnarray}
\fix\phi \cong p(\fix\phi)\notag=\{u\in  \fix\a \mid \g(u)\equiv 0 \mod 2\},
\end{eqnarray}
which is a subgroup of $\fix\a$ with index $[\fix\a :  p(\fix\phi)]\leq 2$.  By Proposition \ref{fixed subgp in surface gp} and Lemma \ref{F_2 subgp}, $\fix\phi\cong \fix\a\cong F_t$ $(0\leq t<2g))$ or $\pi_1(\Sigma_g)$ if the index is $1$, and
$$\fix\phi\cong F_{2t-1} ~ (1\leq t< 2g), ~\pi_1(\Sigma_{2g-1}),$$
if the index is $2$.
\end{proof}

\begin{proof}[\textbf{Proof of Proposition \ref{endo-fixed sbgp surface}}]%\label{endo-fixed sbgp surf}
In the proof of Theorem \ref{main thm2' surf}, for any  $m\geq 1,$ if we take $\phi_m: \pi_1(\Sigma_g)\times \Z \to \pi_1(\Sigma_g)\times \Z$ as
$$\phi_m(u, v)=(u, \g(u)+(m+1)v),$$
$$\g: \pi_1(\Sigma_g)\to \Z,  \quad \g(a_1)=1, ~~\g(a_i)=\g(b_j)=0, ~i\geq 2, ~j\geq 1,$$
where $a_i, b_j$ are the generators in the canonical presentation of $\pi_1(\Sigma_g)$.
Then
\begin{eqnarray}
\fix\phi_m&=&\{(u,v)\in  \pi_1(\Sigma_g)\times \Z\mid  v=\nu(u, a_1)+(m+1)v\}\notag\\
&=&\{(u,v)\in  \pi_1(\Sigma_g)\times \Z\mid  v=-\frac{\nu(u, a_1)}{m}\}\notag\\
&\cong &\{ u \in  \pi_1(\Sigma_g) \mid  \nu(u, a_1)\equiv 0 \mod m \}\notag\\
&\cong & \pi_1(\Sigma_{m(g-1)+1}),\notag
\end{eqnarray}
where $\nu(u, a_1)$ is the sum of powers of $a_1$ in $u$. The last ``$\cong$" holds because $\fix\phi_m$ is a subgroup of $\pi_1(\Sigma_g)$ with index $m$. Note that $\phi_m$ is not an automorphism for $m\geq 1$. We are done.
\end{proof}

\subsection{Proofs of the main results}

Now we give the main result of this section.

\begin{thm}\label{main thm in sect 4}
Let $G=\pi_1(\Sigma_g)\times \Z^k$ ($g, k\geq 2$). Then, a subgroup $H\leq G$ is aut-fixed up to isomorphism, i.e, there exists $\phi\in \aut(G)$ such that $\fix\phi\cong H$, if and only if $H$ has one of the following forms:
\begin{enumerate}
   \item $F_t\times \Z^s$ for $t\geq 0, ~0\leq s\leq k$ with $t< 2g$ if $s=k$;
  \item $\pi_1(\Sigma_{m(g-1)+1})\times \Z^s$ for $m\geq 1, ~0\leq s\leq k$ with $m=1$ if $s=k$;
  \item $F_{\aleph_0}\times \Z^s$ for $1\leq s \leq k$.
\end{enumerate}
\end{thm}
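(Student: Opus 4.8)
The plan is to prove the two implications separately, exploiting the fact that nearly all of the analytic content has already been packaged into the preceding propositions; the theorem then reduces to combining them with the subgroup classification and the multiplicativity principle.

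For the ``only if'' direction, I would start from Lemma \ref{surface subgp}, which says that every subgroup of $G=\pi_1(\Sigma_g)\times\Z^k$ is isomorphic to one of $F_t\times\Z^s$, $\pi_1(\Sigma_{m(g-1)+1})\times\Z^s$, or $F_{\aleph_0}\times\Z^s$ with $0\leq s\leq k$. I would then invoke Proposition \ref{fixed can not big than Z^k surf} to delete the forms that can never arise as a fixed subgroup: $F_{\aleph_0}$ (which forces $s\geq 1$ in the infinitely generated family), $F_t\times\Z^k$ with $t\geq 2g$, and $\pi_1(\Sigma_m)\times\Z^k$ with $m>g$. Rewriting the last exclusion in the $m(g-1)+1$ parametrization, the inequality $m(g-1)+1>g$ is equivalent to $m>1$, so the only surviving surface form at $s=k$ is $\pi_1(\Sigma_g)\times\Z^k$. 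Matching the three surviving families against the statement yields precisely conditions (1)--(3).

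For the ``if'' direction, I would verify each family is aut-fixed up to isomorphism, relying on Lemma \ref{product auo-fixed sbgp} (multiplicativity) together with Lemma \ref{fixed sbgp in Z^K} (every subgroup of $\Z^k$ is aut-fixed). When $s\leq k-1$, both $F_t\times\Z^s$ and $\pi_1(\Sigma_{m(g-1)+1})\times\Z^s$ are handled verbatim by Proposition \ref{thm for id surf}. The remaining pieces are the boundary case $s=k$ and the $F_{\aleph_0}$ family. At $s=k$: for $F_t\times\Z^k$ with $t<2g$ I use Proposition \ref{fixed subgp in surface gp} to realize $F_t$ as a fixed subgroup inside $\pi_1(\Sigma_g)$ and multiply by the identity on $\Z^k$; for $\pi_1(\Sigma_g)\times\Z^k$ (the $m=1$ case) I take the identity on both factors. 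For $F_{\aleph_0}\times\Z^s$ with $1\leq s\leq k$, I build the surface analogue of Example \ref{aleph0}: take $\a=\id$ on $\pi_1(\Sigma_g)$, a nontrivial homomorphism $\g:\pi_1(\Sigma_g)\to\Z$, and $\mathcal{L}=\id$. By Lemma \ref{adding lem 3} and Lemma \ref{subgp of surface} the projection $p(\fix\phi)=\ker\g$ is an infinite-index normal subgroup, hence $\cong F_{\aleph_0}$, so $\fix\phi\cong F_{\aleph_0}\times\Z$; multiplying by the identity-fixed $\Z^{s-1}$ then delivers $F_{\aleph_0}\times\Z^s$.

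The main obstacle is not any single computation---Propositions \ref{fixed can not big than Z^k surf} and \ref{thm for id surf} already carry the weight---but rather the careful bookkeeping at the endpoint $s=k$, where Proposition \ref{thm for id surf} stops short (it only produces $s\leq k-1$) and the extra constraints $t<2g$ and $m=1$ enter. I must check that these constraints are exactly what survive the exclusions of Proposition \ref{fixed can not big than Z^k surf}, and that they remain simultaneously realizable, so that the ``if'' and ``only if'' lists coincide with no gaps or overlaps. Translating cleanly between the $\pi_1(\Sigma_{m(g-1)+1})$ parametrization and the genus/index accounting is the step that demands the most attention.
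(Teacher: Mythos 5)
Your proposal is correct and follows essentially the same route as the paper: the ``only if'' direction via Lemma \ref{surface subgp} and Proposition \ref{fixed can not big than Z^k surf}, and the ``if'' direction by combining multiplicativity (Lemma \ref{product auo-fixed sbgp}), Lemma \ref{fixed sbgp in Z^K}, Proposition \ref{fixed subgp in surface gp}, and Proposition \ref{thm for id surf}. The only cosmetic difference is that for $F_{\aleph_0}\times\Z^s$ you inline the construction (nontrivial $\g$, $\mathcal{L}=\id$, so that $p(\fix\phi)=\ker\g$ is a nontrivial infinite-index normal subgroup, hence $F_{\aleph_0}$) rather than citing Theorem \ref{main thm2' surf}, but that is exactly the argument underlying the cited theorem.
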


\begin{proof}
The ``only if" part follows from the subgroup classification Lemma \ref{surface subgp} and Proposition \ref{fixed can not big than Z^k surf}.

Now we prove the ``if" part. By the ``multiplicativity of aut-fixed subgroup" (Lemma \ref{product auo-fixed sbgp}),
the cases of $H=F_t\times \Z^s ~(t<2g, ~s\leq k)$ and $H=\pi_1(\Sigma_g)\times \Z^s ~(s\leq k)$ directly follow from Lemma \ref{fixed sbgp in Z^K} and Proposition \ref{fixed subgp in surface gp}. Moreover, Theorem \ref{main thm2' surf} and Lemma \ref{fixed sbgp in Z^K} imply that $F_{\aleph_0}\times \Z$ and $\Z^{s-1}$ are both aut-fixed in $\pi_1(\Sigma_g)\times \Z$ and $\Z^{k-1}$ respectively,  then the case of $H\cong F_{\aleph_0}\times \Z^s$  ($1\leq s \leq k$) holds.    The rest cases follow from Proposition \ref{thm for id surf}. Therefore, the desired conclusions hold.
\end{proof}

Take $g=2$ in item (2) of Theorem \ref{main thm in sect 4}, we have such a corollary.

\begin{cor}
For any integers $n, k\geq 2$, there exists an automorphism $\phi\in \aut(\pi_1(\Sigma_2)\times \Z^k)$ such that
$\fix\phi\cong \pi_1(\Sigma_n)$.
\end{cor}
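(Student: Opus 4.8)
The plan is to obtain this corollary as a direct instantiation of item (2) of Theorem \ref{main thm in sect 4}, rather than by any new construction. First I would specialize to $g=2$, so that the groups produced by item (2) take the form $\pi_1(\Sigma_{m(g-1)+1})\times \Z^s=\pi_1(\Sigma_{m+1})\times \Z^s$ for $m\geq 1$ and $0\leq s\leq k$. To realize $\pi_1(\Sigma_n)$ I would strip off the abelian factor by taking $s=0$, and then solve $m+1=n$, which forces the choice $m=n-1$.

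Next I would verify that the pair $(m,s)=(n-1,0)$ satisfies the admissibility constraints recorded in item (2). Since $n\geq 2$, we have $m=n-1\geq 1$, as required. The side condition ``$m=1$ if $s=k$'' is vacuous in our situation, because $s=0$ while $k\geq 2$, so $s\neq k$. Consequently the target group $\pi_1(\Sigma_n)=\pi_1(\Sigma_{(n-1)(g-1)+1})\times \Z^0$ lies in the list of item (2), and Theorem \ref{main thm in sect 4} immediately yields an automorphism $\phi\in\aut(\pi_1(\Sigma_2)\times \Z^k)$ with $\fix\phi\cong \pi_1(\Sigma_n)$.

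There is essentially no obstacle here, since all the work is already carried out in Theorem \ref{main thm in sect 4}; the corollary is purely a matter of choosing parameters and checking the two range conditions. If one prefers an explicit witness rather than an existence statement, I would instead unwind Case (1) of Proposition \ref{thm for id surf} with $\a=\id$, $s=0$ (hence $\ell=k$), and $m=n-1$: that case already constructs a concrete $\phi$ whose projected fixed subgroup $p(\fix\phi)=\{u\in\fix\a\mid \nu(u,a_1)\equiv 0 \bmod m\}$ has index $m=n-1$ in $\pi_1(\Sigma_2)$, and is therefore isomorphic to $\pi_1(\Sigma_{(n-1)(2-1)+1})=\pi_1(\Sigma_n)$ by Lemma \ref{subgp of surface}, giving $\fix\phi\cong \pi_1(\Sigma_n)$ as desired.
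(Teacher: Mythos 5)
Your proposal is correct and matches the paper's own proof, which simply takes $g=2$ in item (2) of Theorem \ref{main thm in sect 4}; your choice $(m,s)=(n-1,0)$ and the check that the side condition ``$m=1$ if $s=k$'' is vacuous are exactly the intended instantiation. The optional unwinding of Case (1) of Proposition \ref{thm for id surf} is also consistent with how the paper establishes that case of the theorem, so nothing is missing.
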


Let's complete this section by proving Theorem \ref{main thm1} and Theorem \ref{main thm4}.

\begin{proof}[\textbf{Proof of Theorem \ref{main thm4}}]
By the subgroup classification Lemma \ref{surface subgp}, Theorem \ref{main thm4} is equivalent to Theorem \ref{main thm in sect 4} clearly.
\end{proof}

\begin{proof}[\textbf{Proof of Theorem \ref{main thm1}}]
Combining the subgroup classification lemmas (Lemma \ref{DV subgp} and Lemma \ref{surface subgp}), Theorem \ref{main thm3} and Theorem \ref{main thm4}, we have proved Theorem \ref{main thm1}.
\end{proof}

\subsection{Remark on Nielsen fixed point theory}\label{subsect Nilesen pt}

For a selfmap $f$ of a space $X$, Nielsen fixed point theory is concerned with
the properties of the fixed point set
$$\fix f:=\{x\in X\mid f(x)=x\}$$
that splits into a disjoint union of \emph{fixed point classes}. For each fixed point class $\F$ of $f$, a classical homotopy invariant \emph{index} $\ind(f,\F)\in \Z$ is defined and plays a central role in Nielsen fixed point theory. (See \cite{J83} for more information on Nielsen theory.)

In 1998, Jiang \cite{J98} posed the following question:

\begin{ques}\cite[Qusetion 3]{J98}\label{Jiang's question}
Does every compact aspherical polyhedron $X$ (i.e. $\pi_i(X)=0$ for all $i>1$) have the \emph{Bounded Index Property (BIP)} or \emph{Bounded Index Property for Homeomorphisms (BIPH)}? Namely, does there exist a bound $\B>0$ such that for any map or homeomorphism $f: X\rightarrow X$ and any fixed point class $\F$ of $f$, the index $|\ind(f,\F)|\leq \B$?
\end{ques}

In the past twenty years, many types of aspherical polyhedra had been showed to support Question \ref{Jiang's question} with a positive answer, but recently in \cite{ZZ23}, the first counterexamples were given:

\begin{thm}[\cite{ZZ23}]\label{ZZ cout-examp.} Let $S^1$ be the circle and $T^2$ the 2-dimension torus. Then
\begin{enumerate}
  \item $\Sigma_2\times S^1$ has BIPH, but does not have BIP;
  \item  $\Sigma_2\times T^2$ does not have BIPH, and hence does not have BIP.
\end{enumerate}
\end{thm}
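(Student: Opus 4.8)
The plan is to prove Theorem \ref{high-dim cout-examp.}, which contains the cited base case Theorem \ref{ZZ cout-examp.} (the case $g=2$) as a special instance, by translating the algebraic classifications of the earlier sections into statements about fixed point indices. The bridge I would set up first is a geometric index lemma. For $X=\Sigma_g\times T^k$, a closed aspherical manifold which is a $K(\pi,1)$ with $\pi=\pi_1(\Sigma_g)\times\Z^k$, and a self-map $f$ inducing $\phi\in\edo(\pi)$ of the split form in Lemma \ref{standerd form}, each fixed point class $\F$ is indexed by a Reidemeister class $[z]$ and governs a twisted endomorphism $\phi_z$. After putting $f$ into fiber-preserving normal form, the corresponding component of $\fix f$ is a clean submanifold which is a $K(\fix\phi_z,1)$, and I would show that $|\ind(f,\F)|=|\chi(\fix\phi_z)|$ whenever the normal derivative $\id-\mathcal L$ on the torus factor is nonsingular (no eigenvalue $1$). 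Since $\fix\phi_z\cong p(\fix\phi_z)\times\Z^s$ by Lemma \ref{adding lem 1}, the Euler characteristic $\chi(\fix\phi_z)$ vanishes as soon as $s\ge 1$; the only classes with nonzero index are those with $s=0$, where $\fix\phi_z$ is a surface group $\pi_1(\Sigma_m)$ and $|\ind(f,\F)|=|\chi(\Sigma_m)|=2m-2$. For the degenerate ($s\ge 1$) classes the index is $0$, and free-group classes contribute index bounded by their rank.

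For part (1), $X=\Sigma_g\times S^1$, the $\textbf{BIPH}$ assertion follows because for a homeomorphism $f$ every twisted automorphism $\phi_z$ is again an automorphism of $\pi_1(\Sigma_g)\times\Z$, so by the complete classification in Theorem \ref{main thm2' surf} (equivalently Corollary \ref{main cor2' surf}) its fixed subgroup lies among finitely many isomorphism types. The surface-type ones with $s=0$ are only $\pi_1(\Sigma_g)$ and $\pi_1(\Sigma_{2g-1})$, so $|\chi(\fix\phi_z)|\le 4g-4$ is uniformly bounded, and the index lemma produces a bound $\B$. The failure of $\textbf{BIP}$ comes from dropping invertibility: by Proposition \ref{endo-fixed sbgp surface} the non-invertible maps $\phi_m(u,v)=(u,\gamma(u)+(m+1)v)$ (degree $m+1$ on the $S^1$-factor, so $\id-(m+1)$ is nonsingular and $s=0$) satisfy $\fix\phi_m\cong\pi_1(\Sigma_{m(g-1)+1})$. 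Realising $\phi_m$ by a map of $X$ produces a fixed point class of index $\pm 2m(g-1)$, unbounded in $m$. Since each $\phi_m$ is an endomorphism but not an automorphism, this breaks BIP while leaving BIPH intact.

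For part (2), $X=\Sigma_g\times T^k$ with $g,k\ge 2$, BIPH already fails. By Theorem \ref{main thm4}(2) with $s=0$ the surface-type subgroups $\pi_1(\Sigma_{m(g-1)+1})$ are aut-fixed for every $m\ge 1$, realised by the homeomorphisms of Proposition \ref{thm for id surf} with $\alpha=\id$ and $\mathcal L=A_\ell$ having no eigenvalue $1$. The corresponding fixed point set is the connected $m$-fold cover of $\Sigma_g$ cut out by $\nu(\cdot,a_1)\equiv 0\ \mathrm{mod}\ m$, i.e. $\Sigma_{m(g-1)+1}$; the index lemma then gives a fixed point class of index $\pm 2m(g-1)$, and letting $m\to\infty$ shows no bound $\B$ can exist. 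Thus $X$ has neither BIPH nor BIP. The contrast with $k=1$ is precisely the contrast between the finite classification of Theorem \ref{main thm2' surf} and the infinite family supplied by Theorem \ref{main thm1} and Theorem \ref{main thm4}.

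The main obstacle is the index lemma itself, namely establishing $|\ind(f,\F)|=|\chi(\fix\phi_z)|$ for the relevant classes. Two points need care. First, putting an arbitrary homeomorphism of $\Sigma_g\times T^k$ into fiber-preserving normal form so that $\fix f$ is a disjoint union of clean aspherical submanifolds, each a $K(\fix\phi_z,1)$ carrying a well-defined local index. Second, checking that when $s=0$ the normal map $\id-\mathcal L$ is nonsingular, so the fixed submanifold is a finite cover of $\fix\alpha$ with no positive-dimensional degeneracy and the index collapses to $\pm\chi$ of that cover, which equals $\chi(\fix\phi_z)$; while the $s\ge 1$ classes contribute $0$ because their fixed submanifolds are (a cover of) $\fix\alpha$ times a torus with vanishing Euler characteristic. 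Once this normalisation and index computation are in place, the algebraic input from the earlier sections finishes the argument.
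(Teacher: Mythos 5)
Your benchmark here is unusual: the paper does not prove this statement at all --- it is quoted from \cite{ZZ23} --- and the paper's own contribution (the discussion preceding Theorem \ref{high-dim cout-examp.}) is only to observe that the two key algebraic inputs, an endomorphism of $\pi_1(\Sigma_2)\times\Z$ and an automorphism of $\pi_1(\Sigma_2)\times\Z^2$ with $[\pi_1(\Sigma_2):p(\fix\phi_m)]=m$ for arbitrary $m$, persist in higher genus and dimension (Propositions \ref{endo-fixed sbgp surface} and \ref{thm for id surf}); all geometric index arguments are delegated to \cite{ZZ23}. Your proposal reconstructs those geometric arguments, and for the two \emph{negative} assertions it does so correctly and in the same spirit as the cited source: realising $\phi_m(u,v)=(u,\g(u)+(m+1)v)$ by a fibre-preserving map of $\Sigma_g\times S^1$, and the automorphism of Proposition \ref{thm for id surf} (with $\a=\id$, $s=0$) by a fibre-preserving homeomorphism of $\Sigma_g\times T^k$, the fixed point set is the connected $m$-fold cover $\Sigma_{m(g-1)+1}$ cut out by $\nu(\cdot,a_1)\equiv 0 \bmod m$; it is clean with nonsingular normal derivative $\id-\mathcal L$, hence is a single fixed point class of index $\pm\chi(\Sigma_{m(g-1)+1})=\mp 2m(g-1)$, unbounded in $m$. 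That part is sound.

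The genuine gap is in your proof of BIPH for $\Sigma_2\times S^1$ in part (1). Your index lemma, $|\ind(f,\F)|=|\chi(\fix\phi_z)|$ whenever $\id-\mathcal L$ is nonsingular on the torus factor, is false in the generality you need it: take $f=f_1\times f_2$, where $f_1$ is a pseudo-Anosov homeomorphism of $\Sigma_2$ with an unrotated $p$-pronged fixed singular point $x_0$ ($p\geq 3$) and $f_2(z)=\bar z$ on $S^1$, so that $\mathcal L=-\id$ and $\id-\mathcal L=2$ is nonsingular. The fixed point class of $(x_0,1)$ has $\fix\phi_z=1$ by Lemma \ref{JWZ mafandeyinli}(1), hence $|\chi(\fix\phi_z)|=1$, yet its index is $(1-p)\cdot 1=1-p$; the class is a point (so a $K(\fix\phi_z,1)$), but it is not nondegenerate in the \emph{surface} direction, which your hypothesis does not control. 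The same example refutes the fallback claim that ``free-group classes contribute index bounded by their rank'' (rank $0$, index $1-p$). The structural point is that the index of a fixed point class is not a function of the isomorphism type of $\fix\phi_z$, so the finiteness of isomorphism types in Theorem \ref{main thm2' surf} cannot by itself produce a bound $\B$. What BIPH actually requires is Waldhausen's theorem to isotope an arbitrary homeomorphism of the Haken manifold $\Sigma_2\times S^1$ to a fibre-preserving one, and then the Nielsen--Thurston/Jiang-type index bounds for surface homeomorphisms to control the pseudo-Anosov-type classes; this is the content of \cite{Z12} and \cite{ZZ23}, and it is not recoverable from your index lemma together with the algebraic classification.
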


Note that in the proof of Theorem \ref{ZZ cout-examp.} (1), the key is to construct an endomorphism $\phi_m=f_{\pi}$ of $\pi_1(\Sigma_2\times S^1)$ with index $[\pi_1(\Sigma_2): p(\fix\phi_m)]=m$ for any $m>1$. By the proof of Proposition \ref{endo-fixed sbgp surface}, such an endomorphism $\phi_m$ does exist for all $\pi_1(\Sigma_g\times S^1) (g\geq 2)$. Moreover, in the proof of Theorem \ref{ZZ cout-examp.} (2), the key is to construct an automorphism $\phi_m$ of $\pi_1(\Sigma_2\times T^2)$ with index $[\pi_1(\Sigma_2): p(\fix\phi_m)]=m$ for any $m>1$. By the proof of Proposition \ref{thm for id surf}, such an automorphism $\phi_m$ also exists for all $\pi_1(\Sigma_g\times T^k) (g, k\geq 2)$. Therefore, by using the same arguments as in \cite{ZZ23}, we can extend Theorem \ref{ZZ cout-examp.} to higher genus and dimension as follows.

\begin{thmbis}{high-dim cout-examp.}
Let $S^1$ be the circle and $T^k$ the $k$-dimension torus. Then
\begin{enumerate}
  \item $\Sigma_g\times S^1 (g\geq 2)$ has BIPH, but does not have BIP;
  \item $\Sigma_g\times T^k (g, k\geq 2)$ does not have BIPH, and hence does not have BIP.
\end{enumerate}
\end{thmbis}

\section{Some results of hyperbolic groups}\label{sect 5}

More generally, we can consider the direct products of hyperbolic groups and free-abelian groups. First, we have the following results.

\begin{prop}\label{fixed subgp hyper gp Prop.}
Let $G=H\times \Z^k (k\geq 2)$ for $H$ a non-elementary torsion-free hyperbolic group, and $\b_1(H)$ the first betti number of $H$.
\begin{enumerate}
  \item If $\b_1(H)=0$, then every aut-fixed subgroup of $G$ has a form $N\times \Z^s (s\leq k)$ for $N\subseteq H$ an aut-fixed subgroup of $H$;
  \item If $\b_1(H)\geq 1$, then for any $m\geq 1$ and any $0\leq s<k$, there exists $\phi\in \aut(G)$ such that $$\fix\phi\cong N\times \Z^s $$ where $N$ is a normal subgroup of $H$ with index $[H:N]=m$.
  \end{enumerate}
\end{prop}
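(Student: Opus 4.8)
The plan is to handle the two parts separately, in both cases starting from the standard form of Lemma \ref{standerd form}: any $\phi\in\aut(G)$ is $\phi(u,v)=(\a(u),\g(u)+\mathcal L(v))$ with $\a\in\aut(H)$, $\mathcal L\in\aut(\Z^k)$ and $\g\in\homo(H,\Z^k)$. For part (1) the decisive point is that $\b_1(H)=0$ kills the twisting term $\g$. Indeed $H$ is finitely generated, so $H^{\mathrm{ab}}$ is a finitely generated abelian group whose free rank is $\b_1(H)=0$; hence $H^{\mathrm{ab}}$ is finite. Since $\g$ factors through $H^{\mathrm{ab}}$ and $\Z^k$ is torsion-free, $\g=0$. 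Thus $\phi=\a\times\mathcal L$ and $\fix\phi=\fix\a\times\fix\mathcal L=N\times\Z^s$, where $N:=\fix\a$ is an aut-fixed subgroup of $H$ by definition and $\fix\mathcal L=\{v\in\Z^k\mid \mathcal L v=v\}\cong\Z^s$ for some $s\le k$. As every aut-fixed subgroup of $G$ arises as such a $\fix\phi$, this is exactly the assertion.

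For part (2) I would transcribe the explicit construction of Proposition \ref{thm for id surf}, replacing the $a_1$-exponent-sum map by a surjection furnished by the Betti number hypothesis. Since $\b_1(H)\ge1$, the free rank of $H^{\mathrm{ab}}$ is positive, so there is an epimorphism $\nu:H\twoheadrightarrow\Z$; put $N:=\{u\in H\mid \nu(u)\equiv 0 \pmod m\}$, a normal subgroup of index exactly $m$ as the kernel of $H\xrightarrow{\nu}\Z\twoheadrightarrow\Z/m\Z$. I then set $\a=\id_H$, $\g(u)=(\nu(u),0,\dots,0)^T$, and let $\mathcal L$ be the block-diagonal integer automorphism $B=\mathrm{diag}(A_\ell,I_s)$ of $\Z^k$ from that proposition with $\ell=k-s$; for the boundary value $s=k-1$ I would instead use the block $\bigl(\begin{smallmatrix}1&m\\0&1\end{smallmatrix}\bigr)$ padded by $I_{k-2}$, exactly as in its Case (2). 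Because $\det B=1$, the map $\phi(u,v)=(u,\g(u)+\mathcal L(v))$ is an automorphism of $G$.

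Finally I would read off $\fix\phi$ from $(\id-\mathcal L)v=\g(u)$, which decouples along the blocks: the $A_\ell$-block forces $\nu(u)\equiv 0\pmod m$ and determines its coordinates uniquely, so the projection $p:H\times\Z^k\to H$ restricts to an isomorphism of that part onto $N$, while the $I_s$-block leaves $s$ coordinates free and splits off a direct factor $\Z^s$. Hence $\fix\phi\cong p(\fix\phi)\times\Z^s=N\times\Z^s$, as desired. I expect no genuine obstacle here: once part (1) establishes the clean dichotomy $\g=0$, the content of the proposition is entirely in part (2), which is a faithful copy of the surface-group computation with $\nu$ in place of $\nu(\cdot,a_1)$. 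The only points demanding care are verifying that the index of $N$ is exactly $m$ (using surjectivity of $\nu$, hence of $\nu\bmod m$) and that the $A_\ell$-block contributes no spurious free rank, so that the two permitted ranges $0\le s\le k-2$ and $s=k-1$ together cover all $0\le s<k$.
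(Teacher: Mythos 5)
Your proposal is correct and follows essentially the same route as the paper: part (1) is identical (you merely spell out why $\b_1(H)=0$ forces $\g=0$, which the paper states without detail), and part (2) is the same construction — twist the identity of $H$ by an epimorphism $\nu:H\twoheadrightarrow\Z$ and choose $\mathcal L$ so that the fixed-point equation forces $\nu(u)\equiv 0 \pmod m$. The only cosmetic difference is that you transcribe the block-matrix construction of Proposition \ref{thm for id surf} directly on $\Z^k$, whereas the paper first reduces to $k=2$ via the multiplicativity lemma and then uses the two $2\times 2$ matrices $\mathcal L_0$, $\mathcal L_1$; these amount to the same thing, since your block-diagonal $\mathrm{diag}(A_\ell, I_s)$ is precisely a product automorphism.
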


%there is no nontrivial homomorphism from $H$ to $\Z$  then $G$ contains, up to isomorphism, only finitely many aut-fixed subgroups with the form: $N\times \Z^s$ for $N$ and

\begin{proof}%[\textbf{Proof of Proposition \ref{fixed subgp hyper gp Prop.}}]
(1) If $\b_1(H)=0$, then there is no nontrivial homomorphism from $H$ to $\Z^k$. By Lemma \ref{standerd form}, every automorphism $\phi: G\to G=H\times \Z^k$ has a form
$$\phi(u, v)=(\a(u),\mathcal L(v)),$$
where $\a: H\xrightarrow{\cong} H$ and $\mathcal{L}: \Z^k\xrightarrow{\cong}\Z^k$ are automorphisms. Hence
$$\fix\phi=\fix\a\times \fix\mathcal L=N\times \Z^s (s\leq k),$$
where $N=\fix\a\subseteq H$ as requested.\\

(2) Now we suppose $\b_1(H)\geq 1$. Then there exists an epimorphism $\gamma: H\to \Z$, that can be viewed as $\gamma: H\to \Z\times \Z^{k-1}, h\mapsto (\g(h), 0)$.
By using the ``multiplicativity of aut-fixed subgroups" as in the proof of Theorem \ref{main thm3}, it suffices to prove the conclusion for the case $k=2$.

For any $m\geq 1$ and $0\leq s<2$, we can define automorphisms
\begin{eqnarray}
\phi_s: H\times\Z^2 \to H\times \Z^2 ,\notag
\quad \phi_s(u,v)=(u, \g(u)+\mathcal{L}_s(v)),
\end{eqnarray}
where the automorphisms
$$\mathcal{L}_0=\begin{pmatrix}
		m+1 & m\\
		1 & 1\\
	\end{pmatrix}, ~ \mathcal{L}_1=\begin{pmatrix}
		1 & m\\
		0 & 1\\
	\end{pmatrix}: \Z^2\to\Z^2.$$
Then
	\begin{eqnarray}%\label{aequhy 1}
		\fix\phi_0&=&\{(u,\begin{pmatrix}
		v_1\\
		v_2\\
	\end{pmatrix})\in H\times \Z^2\mid \begin{pmatrix}
		v_1\\
		v_2\\
	\end{pmatrix}=\g(u)+\mathcal{L}_0\begin{pmatrix}
		v_1\\
		v_2\\
	\end{pmatrix}\}\notag\\
&=&\{(u,\begin{pmatrix}
		0\\
		v_2\\
	\end{pmatrix})\in H\times \Z^2\mid \g(u)=-mv_2\}\notag\\
&\cong& \{u\in H \mid ~\g(u)\equiv 0 \mod m\}:=N,\nonumber
\end{eqnarray}
where $N$ is a normal subgroup of $H$ with index $[H:N]=m\geq 1$, and
\begin{eqnarray}%\label{aequhy 1}
		\fix\phi_1&=&\{(u,\begin{pmatrix}
		v_1\\
		v_2\\
	\end{pmatrix})\in H\times \Z^2\mid \begin{pmatrix}
		v_1\\
		v_2\\
	\end{pmatrix}=\g(u)+\mathcal{L}_1\begin{pmatrix}
		v_1\\
		v_2\\
	\end{pmatrix}\}\notag\\
&=&\{(u,\begin{pmatrix}
		v_1\\
		v_2\\
	\end{pmatrix})\in H\times \Z^2\mid \g(u)=-mv_2\}\notag\\
&\cong& N\times \Z,\nonumber
\end{eqnarray}
here the factor $\Z$ is generated by the independent element $v_1$. We are done.
\end{proof}

%\subsection{Finite index rigid}

A group G is called \emph{finite index rigid} if it does not contain isomorphic finite index subgroups of different
indices. For example, the cyclic group $\Z$ is not finite index rigid
(e.g. $2\Z\cong 3\Z$). In contrast, finite groups and groups with non-zero Euler characteristic (e.g.
non-abelian free groups and hyperbolic surface groups) are finite index
rigid. For more finite index rigid groups, see Lazarovich's recent paper \cite{Laz23}. Moreover, Lazarovich showed:

\begin{thm}[Lazarovich]
Every non-elementary hyperbolic group is finite index rigid.
\end{thm}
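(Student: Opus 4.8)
The plan is to locate a single numerical invariant $I(\cdot)$ of groups that is at once an \emph{isomorphism invariant} and \emph{multiplicative under finite index}, meaning $I(K)=[G:K]\cdot I(G)$ for every finite-index $K\leq G$, and that is moreover \emph{nonzero} for the groups at hand. Granting such an $I$, the theorem is immediate. Indeed, if $H_1,H_2\leq G$ are finite-index subgroups with $H_1\cong H_2$, then
$$[G:H_1]\cdot I(G)=I(H_1)=I(H_2)=[G:H_2]\cdot I(G),$$
so $I(G)\neq 0$ forces $[G:H_1]=[G:H_2]$. Thus the entire burden of the proof is shifted onto producing a nonvanishing multiplicative invariant, and I would organize the argument as a search for the cheapest invariant that does the job in each regime.

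The first and most natural candidate is the rational Euler characteristic $\chi$. A non-elementary hyperbolic group acts cocompactly on a contractible Rips complex, hence is of type VF (and of type F in the torsion-free case that the present paper actually needs), so $\chi(G)\in\Q$ is well defined; it depends only on the isomorphism type and satisfies $\chi(K)=[G:K]\,\chi(G)$. Consequently the theorem holds outright whenever $\chi(G)\neq 0$. This already disposes of a large supply of cases, for instance free groups of rank $\geq 2$ (where $\chi=1-g$) and closed surface groups (where $\chi=2-2g$), and more generally every $G$ whose rational Euler characteristic does not vanish.

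The remaining, and genuinely difficult, case is that of a hyperbolic group with $\chi(G)=0$, the prototype being a closed hyperbolic $3$-manifold group. Here I would replace $\chi$ by finer multiplicative isomorphism invariants. The $\ell^2$-Betti numbers $b_p^{(2)}(\cdot)$ are isomorphism invariants obeying $b_p^{(2)}(K)=[G:K]\,b_p^{(2)}(G)$, so the argument above goes through verbatim as soon as some $b_p^{(2)}(G)\neq 0$; when all $\ell^2$-Betti numbers vanish, the next invariant to try is the $\ell^2$-torsion $\rho^{(2)}(G)$, which is again multiplicative under finite index. An alternative, more structural route is to run the argument through Bowditch's canonical JSJ decomposition of a one-ended hyperbolic group: this decomposition is preserved by abstract isomorphisms and behaves controllably under passage to finite-index subgroups, so one could attempt to reduce rigidity to that of the (virtually) Fuchsian vertex pieces, where $\chi<0$ applies once more, together with that of the rigid vertex groups.

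The main obstacle is precisely this last case: establishing that for \emph{every} hyperbolic group of vanishing Euler characteristic the chosen refined invariant is genuinely nonzero (for example, positivity of $\ell^2$-torsion, proportional to volume in the manifold setting) and that it is coherently organized across a JSJ decomposition. This is where soft invariant-theoretic bookkeeping runs out and the deep geometric input of Lazarovich \cite{Laz23} is required; I would expect this step, rather than the formal reduction, to carry essentially the full weight of the theorem.
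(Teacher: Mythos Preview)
The paper does not prove this theorem at all: it is quoted from \cite{Laz23} and used as a black box in the proof of Theorem~\ref{main thm5}. There is therefore no ``paper's own proof'' to compare against.

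As for your sketch on its own merits: the reduction to a nonzero multiplicative isomorphism invariant is correct and standard, and you rightly observe that $\chi\neq 0$ already handles free groups and surface groups (which are the only cases the present paper actually needs). But the proposal is not a proof, and you say so yourself in the last paragraph. Two concrete gaps: first, $\ell^2$-torsion is not known to be well defined for an arbitrary hyperbolic group (one needs the group to be $\ell^2$-acyclic and of determinant class), and even where it is defined its nonvanishing is open in general, so it cannot presently serve as the missing invariant. Second, the JSJ route you gesture at is indeed closer in spirit to Lazarovich's actual argument, but ``behaves controllably under passage to finite-index subgroups'' hides the entire difficulty: the JSJ decomposition of a finite-index subgroup is not simply read off from that of the ambient group, and making an inductive complexity argument work across both abstract isomorphisms and finite covers is precisely the content of \cite{Laz23}. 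Your write-up is an honest outline of why the theorem is plausible and where the hard case lies, but it does not supply the missing step.
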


\begin{proof}[\textbf{Proof of Theorem \ref{main thm5}}]
Recall that every hyperbolic group contains, up to isomorphism, only finitely many fixed subgroups of automorphisms. Moreover, since the non-elementary torsion-free hyperbolic group $H$ is finite index rigid, its subgroups of distinct indices are non-isomorphic. Then the the conclusion follows form Proposition \ref{fixed subgp hyper gp Prop.} clearly.
\end{proof}

\noindent\textbf{Acknowledgements.} The authors are very grateful to the anonymous referee, whose valuable and detailed comments greatly improved our initial manuscript. The authors also thank Qian Chen and Ke Wang for their valuable communications.
%\section*{Appendix\\ Proofs of Theorems for $H\times \Z^k$}\label{sect Appendix}

\appendix
\setcounter{table}{0} % 将表格计数设置为0
\setcounter{figure}{0} % 将图片计数设置为0
\setcounter{equation}{0} % 将公式计数设置为0
\renewcommand{\theequation}{A.\arabic{equation}} %在公式序号前加上章节号，如：A-1

%\section*{Appendix\\Proofs of Theorems for $H\times \Z^k$}

%\end{appendices}

%References--------------------------------------------------------------------------------------------------------------------------

%\end{CJK}
\end{document}